\renewcommand{\thetab}{\gammab}
\renewcommand{\thetabs}{\gammabs}
\renewcommand{\thetabh}{\gammabh}
\def \sigmae   {\sigma_{\epsilon}}
\def \sigmaeh  {\hat{\sigma}_{\epsilon}}
\def \betanom  {\betab_{-m}}
\def \betahnom {\hat{\betab}_{-m}}
\def \betasnom {{\betab}^{\ast}_{-m}}
\def \betahat  {\hat{\betab}_{a,T}}
\def \betahas  {\hat{\betab}_{a,S}}
\def \betasm   {{\betab}^{\ast}_{m}}
\def \betaha   {\hat{\betab}_a}
\def \ts       {T\cup S_{a}}
\def \betahats {\hat{\betab}_{a,T\cup S_{a}}}
\def \betasts  {\betabs_{T\cup S_{a}}}
\def \thetam   {\gammab_{m}}
\def \thetasm  {\gammabs_{m}}
\def \gammam   {\gammab_{m}}
\def \gammasm  {\gammabs_{m}}
\def \gammaha  {\hat{\gammab}_a}
\def \fh       {\hat{\f}}
\def \Fh       {\hat{\F}}
\def \Uh       {\hat{\U}}
\def \Uhts     {\hat{\U}_{T\cup S_{a}}}
\def \Uts      {{\U}_{T\cup S_{a}}}
\def \Uht      {\hat{\U}_{T}}
\def \Uhs      {\hat{\U}_{S_a}}
\def \Ut       {{\U}_{T}}
\def \Us       {{\U}_{S_a}}
\def \wh       {\hat{\w}}
\def \ws       {{\w}^{\ast}}
\def \ss       {s^{\ast}}
\def \Sha      {\hat{S}_a}
\def \N        {\mathcal{N}}
\def \RR       {\bfm{R}}
\def \pr       {\mathrm{Pr}}
\def \sqlogqn  {\sqrt{\frac{\log p_{-m}}{n}}}
\def \Itrue    {\I^{\ast}_{\gammab_m|\betab_{-m}}}
\def \Itruehalf{\I^{\ast^{-1/2}}_{\gammab_m|\betab_{-m}}}
\def \Ihat     {\hat{\I}_{\gammab_m|\betab_{-m}}}
\def \Itilde   {\tilde{\I}_{\gammab_m|\betab_{-m}}}
\def \Itilde   {\tilde{\I}_{\thetab_m|\betab_{-m}}}
\def \keyrate  {\sqrt{(\log p)/n}\{1\vee (n^{1/4}/\sqrt{p_{\min}}) \}}
\def \gammat   {\tilde{\gammab}}
\def \Sigmaunom{\Sigmab_{u_{-m}}}
\def \Sigmaxnom{\Sigmab_{x_{-m}}}
\def \varthetab{\boldsymbol{\vartheta}}
\DeclareMathOperator*{\supp}{supp}
\newcommand{\Lsupnorm}[1]{\lVert#1\rVert_{L_{\infty}}}
\newcommand{\ssupnorm}[1]{\Big\lVert#1\Big\rVert_{{\infty}}}
\newcommand{\slonenorm}[1]{\Big\lVert#1\Big\rVert_{{1}}}
\def \Itildehalf   {\tilde{\I}^{-1/2}_{\gammab_m|\betab_{-m}}}
\def \betabt   {\tilde{\betab}}
\newcounter{CondCounter}
\newenvironment{con}{
  \par\noindent\refstepcounter{CondCounter} 
  \textbf{Condition \theCondCounter.}}{} 
\def\spacingset#1{\renewcommand{\baselinestretch}%
  {#1}\small\normalsize} \spacingset{1}
\newcounter{ExaCounter}
\newenvironment{exa}{
  \par\noindent\refstepcounter{ExaCounter} 
  \textbf{Example \theExaCounter.}}{} 
\def\spacingset#1{\renewcommand{\baselinestretch}%
  {#1}\small\normalsize} \spacingset{1}
\begin{document}

\title{\Large{\textbf{Integrative Factor Regression and Its Inference}}\\
\Large{\textbf{for Multimodal Data Analysis}}}
\author{
\bigskip
Quefeng Li and Lexin Li \\
\normalsize{University of North Carolina, Chapel Hill and University of California, Berkeley}
}
\date{}
\maketitle

\begin{abstract}
Multimodal data, where different types of data are collected from the same subjects, are fast emerging in a large variety of scientific applications. Factor analysis is commonly used in integrative analysis of multimodal data, and is particularly useful to overcome the curse of high dimensionality and high correlations. However, there is little work on statistical inference for factor analysis based supervised modeling of multimodal data. In this article, we consider an integrative linear regression model that is built upon the latent factors extracted from multimodal data. We address three important questions: how to infer the significance of one data modality given the other modalities in the model; how to infer the significance of a combination of variables from one modality or across different  modalities; and how to quantify the contribution, measured by the goodness-of-fit, of one data modality given the others. When answering each question, we explicitly characterize both the benefit and the extra cost of factor analysis. Those questions, to our knowledge, have not yet been addressed despite wide use of factor analysis in integrative multimodal analysis, and our proposal bridges an important gap. We study the empirical performance of our methods through simulations, and further illustrate with a multimodal neuroimaging analysis.
\end{abstract}
\bigskip

\noindent
{\bf Keywords:} Data integration; Dimension reduction; Factor analysis; High-dimensional inference; Multimodal neuroimaging; Principal components analysis.
\vfill

\newpage
\spacingset{1.45} 

\section{Introduction}
\label{sec:introduction}

Thanks to rapid technological advances, multiple types of data are now frequently collected for a common set of experimental subjects. Such a new data structure, often referred as multi-view, multi-source or multimodal data, is fast emerging in a wide range of scientific fields. Examples include multi-omics data in genomics, multimodal neuroimaging data in neuroscience, multimodal electronic health records data in health care administration, among others. Numerous empirical studies have found that, by combining diverse but usually complementary information from different types of data, an integrative analysis of multimodal data is often beneficial; see \citet{Uludag2014, LiYF2016, richardson2016statistical} for reviews and the references therein. 

In view of the promise of multimodal data, a number of statistical methods have recently been developed for integrative analysis. An important class of such solutions is matrix or tensor factorization, which decomposes multimodal data into the components that capture joint variations shared across modalities, and the components that characterize modality-specific variations \citep{lock2013joint, yang2015non, li2017incorporating, lock2018supervised, gaynanova2019structural}. Another class is canonical correlation analysis, which seeks maximum correlations between different data modalities through decomposition of the between-modality dependency structure \citep{li2018general, ShuZhu2019}. However, all these methods are unsupervised, in the sense that there is not a response variable involved. \citet{LiChen2018} recently proposed an integrative reduced-rank regression to model multivariate responses given multi-view data as predictors. \citet{XueQu2019} developed an estimating equations approach to accommodate block missing patterns in multimodal data. Their methods are supervised, but both focused on parameter estimation and variable selection instead of statistical inference. 

It is of ubiquitous interest to study the predictive associations between responses and multimodal predictors. However, there are some unique characteristics of multimodal data that make the problem challenging. First, multimodal data are often high-dimensional. In plenty of applications, even a single modality contains more variables than the sample size. Second, some variables in multimodal data can be highly correlated. This can happen for the variables within a single modality, or for the related variables across multiple modalities as they often measure related features of the same subject. This phenomenon has been constantly observed, and is actually the base upon which those matrix or tensor factorization solutions are built \citep{lock2013joint}. Such high correlations pose challenges when directly applying many standard high-dimensional methods, such as LASSO \citep{tibshirani_regression_1996}, to multimodal data, as they usually require the predictors not to be highly correlated in order to achieve some desired statistical properties, e.g., the variable selection consistency. Finally, multimodal data pose new questions; for instance, how to quantify the contribution and statistical significance of one data modality conditioning on the other data modalities in the regression model.

Factor analysis is a well-known approach to both reduce high dimensionality and high correlations among the variables. For data with a single modality, \citet{Fan2013b} employed a factor model to estimate a non-sparse covariance matrix. \citet{kneip2011factor} proposed to include the latent factors as additional explanatory variables in a high-dimensional linear regression, and established the model selection consistency. \citet{fan2016factor} proposed a factor-adjusted model selection method for a general high-dimensional $M$-estimation problem. They separated the latent factors from the idiosyncratic components to reduce correlations among the covariates, and showed that their method can reach the variable selection consistency under milder conditions than standard selection methods. \citet{li2018embracing} studied estimation of a covariance matrix of variables. They showed that leveraging on additional auxiliary variables can improve the estimation, when the auxiliary variables share some common latent factors with the variables of interest. For data with multiple modalities, \citet{shen2013sparse} proposed an integrative clustering method based on identifying common latent factors from multi-omics data. \citet{ZhangQu2019} developed an imputed factor regression model for dimension reduction and prediction of multimodal data with missing blocks. Despite these efforts, however, there is little work on statistical inference for supervised modeling of multimodal data. Moreover, there is no explicit quantification of the benefit of factor analysis in a multimodal regression setting, and many important inference-related questions remain unanswered.

In this article, we aim to bridge this gap. We consider an integrative linear regression model built upon the latent factors extracted from multimodal data. We show that this model alleviates high dimensionality and high correlations of multimodal data. Based on this model, we address three important questions: how to infer the significance of one data modality given the other modalities in the model; how to infer the significance of a combination of variables from one or more modalities; and how to quantify the contribution, measured by the goodness-of-fit, of one data modality given the others. When answering each question, we explicitly characterize both the benefit and the extra cost of factor analysis. First, by resorting to a relatively small number of latent factors, it effectively reduces the dimensionality and turns a high-dimensional test to a low-dimensional one when testing the significance of a whole modality. As a result, it enables us to derive a closed form for the limiting distribution of the test statistic; see Theorem \ref{thm:1}. Second, our method can consistently estimate the support and nonzero components of the covariate coefficients in the regression model. More importantly, by using the decorrelated idiosyncratic components from factor analysis as the pseudo predictors, instead of the original highly correlated covariates, it requires much weaker conditions to reach the variable selection and estimation consistency; see Theorem \ref{thm:2}. In addition, we show that, when there are enough variables in each modality so that the latent factors can be well estimated, the resulting estimation error can reach the minimax optimal rate, and under weaker conditions. Third, such an improvement in selection and estimation in turn benefits the inference of the significance of a linear combination of predictors, by requiring less stringent conditions to establish the limiting distribution of the test statistic; see Theorem \ref{thm:3}. Finally, by leveraging on the latent factors shared across modalities, it enables us to obtain a closed-form measure of the variance of the response explained by one modality in addition to the others. Such a measure facilitates the quantification of the contribution of an individual modality.

Our proposal contributes on several fronts. Even though factor analysis has been widely used in multimodal data analysis, there has been no formal test developed to explicitly quantify the contribution and significance of an individual modality or a related set of variables across different modalities. Our proposal provides the first inferential tools to address those important questions. Moreover, our work is built on careful examination of the benefit and trade-off of factor analysis in regression. Compared to the existing literature, the proof techniques are much more involved than those of the standard setting when the design matrix is observed and fixed with a single data modality. The technical tools we develop here are not limited to our setting alone, but are applicable to general supervised high-dimensional factor models. We also remark that, although we focus on a linear factor regression model, most of the inference-related results we obtain can be extended to more general $M$-estimation problems such as a generalized linear model.

We employ the following notation throughout this article. For a vector $\a\in\Rcal^d$, let $\supnorm{\a}=\max_j|a_j|$, $\lonenorm{\a}=\sum_{j=1}^d |a_j|$, $\ltwonorm{\a}=(\sum_{j=1}^d a_j^2)^{1/2}$ denote its sup-norm, $L_1$-norm and Euclidean norm, respectively. For an index set $S$, let $\a_S$ denote the subvector of $\a$ with indices in $S$. In particular, let the subscript $m$ denote the index set of the $m$th modality, and the subscript $-m$ denote the index set of all other modalities. Let $\a^{\otimes 2}=\a\a'$ denote its outer product. Let $\supp(\a)=\{j:a_j\neq 0 \}$ denote the support of $\a$. For a square matrix $\A=(a_{ij})\in \Rcal^{d\times d}$, let $\lambda_{\min}(\A)$ and $\lambda_{\max}(\A)$ denote its minimum and maximum eigenvalues. Let $\supnorm{\A}=\sup_{ij} |a_{ij}|$, $\lonenorm{\A}= \max_{1\leq j\leq d} \sum_{i=1}^d |a_{ij}|$, $\ltwonorm{\A}=\lambda_{\max}(\A)$, $\fnorm{\A}= (\sum_{i,j} a_{ij}^2)^{1/2}$ denote its element-wise sup-norm, $L_1$-norm, $L_2$-norm, and Frobenious norm, respectively. Let $\A_S$ denote the submatrix of $\A$ with row and column indices in $S$. For a rectangular matrix $\B=(b_{ij})\in \Rcal^{m\times n}$, let $\Lsupnorm{\B}=\max_{1\leq i\leq m} \sum_{j=1}^n |b_{ij}|$. For two sequences $a_n$ and $b_n$, write $a_n=o(b_n)$ if $a_n/b_n \to 0$, and $a_n\gg b_n$ if $b_n/a_n \to 0$. For an integer $M$, let $[M] = \{1,\ldots,M \}$. For a set $S$, let $|S|$ denote the number of elements in $S$.

The rest of the article is organized as follows. We introduce the integrative factor regression model in Section \ref{sec:model}, and describe the parameter estimation in Section \ref{sec:estimation}. These results are mostly built upon the existing literature on factor analysis. Then we address the three questions, which to our knowledge have not been answered before. That is, we develop a test to evaluate the significance of an individual modality given the other modalities in Section \ref{sec:test-whole}, develop a test for a linear combination of predictors in Section \ref{sec:test-lincomb}, and derive a measure to quantify the contribution of an individual modality in Section \ref{sec:var-measure}. We present the simulations and a multimodal neuroimaging data example in Section \ref{sec:numerical}. We relegate all technical proofs and some additional lemmas to the Supplementary Materials.

\section{Integrative factor regression model}
\label{sec:model}

Suppose there are $M$ modalities of variables. Let $\x_m \in \Rcal^{p_m}$ denote the vector of $p_m$ random variables from the $m$th modality, and $y$ denote the response variable. Let $\x=(\x'_1,\ldots,\x'_M)'\in \Rcal^p$, and $p = \sum_{m=1}^M p_m$. We assume $\x_m$ is driven by some latent factors in that $\x_m$ can be decomposed as 
\begin{equation} \label{eq:1}
\x_m = \Lambdab_m \f_m + \u_m,
\end{equation}
where $\f_m\in \Rcal^{K_m}$ is the vector of $K_m$ random latent factors, $\u_m \in \Rcal^{p_m}$ is the vector of random idiosyncratic errors of variables in the $m$th modality that are uncorrelated with $\f_m$, and $\Lambdab_m\in \Rcal^{p_m\times K_m}$ is the loading matrix of $\x_m$ on the latent factors $\f_m$. To avoid the identifiability issue on $\Lambdab_m$ and $\f_m$, we adopt the usual assumption in the factor analysis literature by assuming that
\begin{equation*} \label{eq:2}
\var(\f_m)=\I_{K_m}, \;\; \text{ and } \;\; \Lambdab_m'\Lambdab_m=\D_m\in \Rcal^{K_m \times K_m} \text{ is a diagonal matrix.}
\end{equation*}
We also assume that $\f=(\f'_1,\ldots,\f'_M)' \in \Rcal^{K}$ is uncorrelated with $\u=(\u'_1,\ldots,\u'_M)' \in \Rcal^{p}$, where $K=\sum_{m=1}^M K_m$. Let $\Lambdab=\diag(\Lambdab_1,\ldots,\Lambdab_M)\in \Rcal^{p\times K}$ be the block diagonal matrix of the loading matrices from all modalities, and $\Sigmab_u=\var(\u)$ be the covariance matrix of the idiosyncratic errors. In the factor analysis literature, e.g. \cite{Fan2013b}, it is often assumed that $\Sigmab_u$ is sparse, i.e., after removing the variations contributed by the latent factors, the correlations among the idiosyncratic components are weak. Therefore, the idiosyncratic $\u$ can be viewed as a decorrelated version of the original variables $\x$.

We next employ a linear model to connect $\x_m$ with $y$, in that, 
\begin{equation} \label{eq:4}
  y = \sum_{m=1}^M \x_m' \betabs_m + \epsilon,
\end{equation}
where $\betabs_m \in \Rcal^{p_m}$ is the true effect of $\x_m$ on the response $y$, and $\epsilon$ is an error uncorrelated of the  covariates, with $\E(\epsilon)=0$ and $\var(\epsilon)=\sigmae^2$.

Suppose we have $n$ i.i.d.\ realizations of the data, $\Y = (y_1,\ldots,y_n)' \in \Rcal^n$, $\X_m = (\x_{1,m},\ldots, \x_{n,m})' \in \Rcal^{n \times p_m}$, $\X=(\X_1,\ldots,\X_M)\in \Rcal^{n\times p}$,  $\betabs=({\betab^{\ast}_1}',\ldots,{\betab^{\ast}_M}')'\in\Rcal^{p}$, and $\epsilonb=(\epsilon_1,\ldots,\epsilon_n)' \in \Rcal^n$. Then model \eqref{eq:4} can be written as
\begin{equation*} \label{eq:5}
  \Y \; = \; \sum_{m=1}^M\X_m\betabs_m+\epsilonb \; = \; \X\betab^{\ast}+\epsilonb. 
\end{equation*}
By the factor model \eqref{eq:1}, we have $\X_m=\F_m\Lambdab_m'+\U_m$, where $\F_m=(\f_{1,m},\ldots,\f_{n,m})'\in \Rcal^{n\times K_m}$ is the matrix of the $K_m$ factors in the $m$th modality pertaining to the $n$ subjects, and $\U_m=(\u_{1,m},\ldots,\u_{n,m})'\in \Rcal^{n\times p_m}$ is the matrix of idiosyncratic errors. Then, we have,
\begin{equation} \label{eq:6}
  \Y=\F\gammabs+\U\betabs+\epsilonb,
\end{equation}
where $\F=(\F_1,\ldots,\F_M)\in\Rcal^{n\times K}$, $\gammabs=({\betab^{\ast}_1}'\Lambdab_1,\ldots, {\betab^{\ast}_M}'\Lambdab_M)'\in \Rcal^K$, and $\U=(\U_1, \ldots,$ $\U_M) \in \Rcal^{n\times p}$. We call model (\ref{eq:6}) an integrative factor regression model. In the remainder of this article, we aim to show that model (\ref{eq:6}) can benefit estimation, selection and inference about $\betabs$. The intuition is that, after the latent factors $\F$ are separated, the idiosyncratic error $\U$ can be treated as the pseudo predictors. Such a decorrelation eases selection of $\betabs$, which in turn benefits the inference on $\betabs$. In addition, the factor decomposition also serves as a dimension reduction tool, which enables us to derive some closed-form results in inference. We remark that, in model (\ref{eq:6}), the coefficient $\betabs$ associated with $\U$ is the same as that associated with the original predictor $\X$. It is the main object of interest in our inference, as its component $\betabs_m$ reflects the effect of the $m$th modality $\x_m$ on the response $y$. Meanwhile, we treat $\gammabs$ as a nuisance parameter. We also remark that, one does \emph{not} necessarily have to perform factor decomposition for all data modalities. In practice, we first estimate the number of factors for each modality. For a particular modality that does not admit a  factor structure, we can set the corresponding $\Lambdab_m=\zero$ and $\x_m=\u_m$ in (\ref{eq:1}).

\section{Estimation}
\label{sec:estimation}

To fit model (\ref{eq:6}), we first estimate the latent variables $\F$ and $\U$, along with the number of latent factors $K_m$, using some well established methods in the factor analysis literature. We then estimate $\betabs_m$ through a penalized least squares approach. 

First, we estimate the latent variables $\F$ and $\U$, we adopt the method in \citet{Bai2012} and \citet{Fan2013b}, by running principal components analysis (PCA) on each individual modality $\X_m$. We then estimate $\F_m$ by $\sqrt{n}$ times eigenvectors corresponding to the largest $K_m$ eigenvalues of $\X_m\X_m'$. Denote this estimator by $\hat{\F}_m$. We next estimate $\Lambdab_m$ by $\Lambdabh_m=(1/n)\X_m'\hat{\F}_m$, and estimate $\U_m$ by $\hat{\U}_m=\X_m-\hat{\F}_m\Lambdabh_m'$, accordingly. \citet{Fan2013b} showed that $\hat{\F}_m$ is a consistent estimator, up to a rotation, of $\F_m$, under a pervasive condition that the latent factors should affect many variables; see Condition \ref{con:2} and its discussion in Section \ref{sec:test-whole}. We remark that, there are alternative ways to estimate the factors. For instance, methods such as \citet{Ma2013, Cai2013}, and \citet{lock2013joint} may also be applicable, as long as the resulting factor estimates are consistent. 

Next, we determine the number of the latent factors $K_m$ in each modality, which is usually unknown in practice. We use the method of \citet{Bai2002} to estimate $K_m$ by
\begin{equation} \label{eq:101}
\hat{K}_m = \argmin_{0\leq k\leq \tilde{M}}~ \log \left\{\frac{1}{np_m} \fnorm{\X_m-n^{-1}\Fh_{m_k}\Fh_{m_k}'\X_m}^2 \right\} + k g(n,p_m),
\end{equation}
where $\tilde{M}$ is a pre-defined upper bound on $K_m$, $\Fh_{m_k}$ is $\sqrt{n}$ times eigenvectors corresponding to the
largest $m_k$ eigenvalues of $\X_m\X_m'$, and $g(n,p_m)$ is a penalty function that,
\begin{equation*}
  g(n,p_m) = \frac{n+p_m}{np_m}\log \left(\frac{np_m}{n+p_m} \right), \quad \text{or} \quad
  g(n,p_m) = \frac{n+p_m}{np_m}\log \left(\min\{n,p_m \} \right).
\end{equation*}
For both choices, \cite{Bai2002} showed that $\hat{K}_m$ is a consistent estimator of $K_m$ under some regularity conditions. We make two additional remarks about $K_m$. First, in this article, we treat $K_m$ as fixed, which is reasonable in numerous scientific applications. As $K_m$ is related to the number of spiked eigenvalues of $\X_m \X_m'$, it is usually small. Second, following a common practice of the factor analysis literature, in our subsequent theoretical analysis, we treat $K_m$ as known. All the theoretical results remain valid conditioning on that there is a consistent estimator $\hat{K}_m$ of $K_m$.

Next, we estimate $\betabs$ and $\gammabs$. We replace $\F$ and $\U$ with the corresponding estimators $\Fh=(\Fh_1,\ldots,\Fh_M)$ and $\Uh=(\Uh_1,\ldots,\Uh_M)$, and solve a penalized least squares problem,
\begin{equation} \label{eq:7}
  (\gammabh, \betabh) = \argmin_{(\gammab,\betab)} \frac{1}{2n} \sum_{i=1}^n \left( y_i-\Fh_i'\gammab-\Uh_i'\betab \right)^2+\lambda \sum_{j=1}^p p(|\beta_j|),
\end{equation}
where $p(\cdot)$ is some general folded-concave penalty function, and $\lambda$ is a tuning parameter.  This class of penalty functions includes SCAD \citep{fan2011nonconcave} and MCP \citep{zhang2010nearly}. It assumes that $p(t)$ is increasing and concave in $t\geq 0$, and has a continuous first derivative $\dot{p}(t)$ with $\dot{p}(0+) > 0$. This optimization problem can be solved by standard proximal gradient descent algorithms \citep{parikh2014proximal}. We also briefly comment that, in our optimization \eqref{eq:7}, we do not impose the linear constraint that $\gammab = \Lambdab'\betab$, mainly because both $\F$ and $\Lambdab$ are unknown and unidentifiable in our setting. Besides, as we later show in Theorem \ref{thm:2} that, even without this constraint, the estimator $\betabh$ from \eqref{eq:7} achieves the minimax rate, and can consistently select the support and estimate the nonzero components of $\betabs$, as long as there are enough variables to estimate the latent factors well. We tune $\lambda$ in \eqref{eq:7} using the standard cross-validation method, following \citet{fan2011nonconcave}, while alternative criteria, e.g., the extended BIC \citep{Chen2008}, can also be used to tune $\lambda$.

Finally, we estimate $\sigmae^2$ by $\sigmaeh^2=n^{-1} \sum_{i=1}^n (y_i-\x_i'\tilde{\betab})^2$, where $\tilde{\betab}$ is the Lasso estimator of $\betabs$ obtained by $\tilde{\betab}=\argmin_{\betab} (2n)^{-1} \sum_{i=1}^n (y_i-\x_i'\betab)^2 + \lambda_{\epsilon} \lonenorm{\betab}$, where $\lambda_{\epsilon}$ is the tuning parameter. We show in the Supplementary Materials that $\sigmaeh^2$ is consistent to $\sigmae^2$. Actually, any consistent estimator of $\sigmae^2$ would suffice for the subsequent hypothesis testing procedures. Alternative methods such as scaled LASSO \citep{sun2013sparse}, refitted cross-validation \citep{fan2012variance}, or directly using the residuals from (\ref{eq:7}) can all be applied to estimate $\sigmae^2$ as well.

\section{Hypothesis test of a whole modality}
\label{sec:test-whole}

A crucial question in multimodal data analysis is to evaluate if a whole modality is significantly associated with the outcome, given other modalities in the model. For instance, in multi-omics analysis, it is of interest to test if DNA methylation correlates with the phenotypic traits related to genetic disorders given gene expression level \citep{richardson2016statistical}. In multimodal neuroimaging analysis, it is of interest to evaluate if functional imaging quantification for hypometabolism associates with the diagnosis of Alzheimer's disease, given structural magnetic resonance imaging of brain atrophy measurement \citep{ZhangShen2011}. The challenge here is that even a single modality often contains many more variables than the sample size.  

This is essentially a problem of testing a high-dimensional subvector of $\betabs$ in a high-dimensional regression model. Related testing problems have been extensively studied for a single modality data. For example, \citet{zhang2014confidence, van2014asymptotically, javanmard2014confidence} developed bias-corrected or de-sparsifying methods to test if a fixed-dimensional subvector of $\betabs$ in a high-dimensional linear or generalized linear model equals zero. In particular, under a general $M$-estimation framework, \citet{ning2017general} proposed a decorrelated score test for the same problem, i.e., to test if a subvector $\betabs_S=\zero$. They first showed that their score test statistic has a closed-form limiting distribution when the dimension of the subset $|S|$ is fixed. They then extended to the case where $\betabs_S$ can be any arbitrary subvector of $\betabs$ with $|S|$ diverging and even when $|S| > n$. Built on a pioneering work by Chernozhukov et al. (2013), they showed that the distribution of the supremum of the decorrelated score functions can be approximated by a multiplier bootstrap approach. Consequently, they employed bootstrap simulations to obtain the critical values of the limiting distribution to form the rejection region. Our test differs from \citet{ning2017general}. When $|S|$ diverges, the test of \citet{ning2017general} no longer has a closed-form limiting distribution, and they had to resort to bootstrap for critical values. By contrast, we are able to obtain a closed-form limiting distribution for our test when $|S|$ diverges. This is due to that, instead of using the observed likelihood, we perform factor decomposition on $\X_m$ first, then use the factor model as a dimension reduction tool to reduce a high-dimensional test to a fixed-dimensional one. Our method does pay the extra price that we need to estimate the latent factors to plug into the likelihood function. However, as we show later, this extra cost can be well controlled. We also numerically compare with \citet{ning2017general} in Section \ref{sec:sim-test-whole}. We show that our test is as powerful, and often more powerful than the test of \citet{ning2017general}.  

Formally, for our multimodal analysis, we aim at testing the following pair of hypotheses: 
\begin{eqnarray} \label{eqn:hypothesis-whole}
  H_0:\betabs_m=\zero \quad \textrm{ versus } \quad H_a:\betabs_m\neq \zero. 
\end{eqnarray}
We perform factor decomposition on the $m$th modality following (\ref{eq:1}). Then, 
\begin{equation*}
  \x'\betab=\x_{-m}'\betab_{-m}+\f'_{m}\thetab_m + \u_m'\betab_m,
\end{equation*}
where $\thetab_m=\Lambdab'_m\betab_m$. The null hypothesis $\betabs_m=\zero$ implies that $\thetabs_m=\zero$, where
$\thetabs_m=\Lambdab'_m\betabs_m$. Therefore, under the null hypothesis, testing $\gammabs_m$ is the same as testing $\betabs_m$. The difference is that $\thetabs_m \in \Rcal^{K_m}$ is a low-dimensional vector, while $\betabs_m\in \Rcal^{p_m}$ is high-dimensional. As such, the factor model plays the role of dimension reduction for our testing problem.  Actually, directly testing $\betabs_m$ is challenging, since the dimension of $\betabs_m$ diverges with the sample size, and there is not a closed form for the limiting distribution of $\betabh_m - \betabs_m$, where $\betabh_m$ is an estimator of $\betabs_m$. On the other hand, we note that, under the alternative hypothesis, the magnitude of $\gammabs_m$ can be different from that of $\betabs_m$. As such, the power of the test that is built on $\gammabs_m$ can be different from the one that is built on $\betabs_m$. We later study the local power property of the test based on $\gammabs_m$ in detail.

Next, we develop a factor-adjusted decorrelated score test, and show that it is asymptotically efficient when the latent factors can be well estimated. Following \citet{ning2017general}, and based on the Gaussian quasi-likelihood, we define the decorrelated score function as
\vspace{-0.05in}
\begin{equation*} \label{eq:8}
  \S(\betab,\thetab_m)= \frac{1}{n\sigmae^2} \sum_{i=1}^n (y_i-\f_{i,m}'\thetab_m-\z_i'\betab)(\f_{i,m}-{\W^{\ast}}'\z_i),
\end{equation*}
where $\z_i=(\x'_{i,-m}, \u'_{i,m})' \in \Rcal^p$, $p_{-m}=p-p_m$, and $\W^{\ast}=\E(\x_{i,-m}^{\otimes 2})^{-1} \E(\x_{i,-m}\f'_{i,m})\in \Rcal^{p_{-m}\times K_m}$, which is essentially the projection of the latent factors onto the linear space spanned by $\x_{-m}$. Such a projection is needed to control the variability of the high-order terms in establishing the central limit theorem in Theorem \ref{thm:1} \citep{ning2017general}. In the high-dimensional setting, we need some sparsity condition on $\W^{\ast}$; see Condition \ref{con:4}, and solve a regularized problem to obtain its consistent estimator. We treat the score function as a function of $\gammab_m$. Under the null hypothesis, we propose to estimate $\S(\betab,\thetab_m)$ by,
\begin{equation*} \label{eq:9}
  \hat{\S}(\betabh_{-m},\zero)= \frac{1}{n\sigmaeh^2} \sum_{i=1}^n (y_i-\x_{i,-m}'\betabh_{-m})(\hat{\f}_{i,m}-\hat{\W}'\x_{i,-m}), 
\end{equation*}
where $\hat{\f}'_{i,m}$ is the $i$th row of the estimated latent factor matrix $\Fh_m$, $\sigmaeh^2$ is any consistent
estimator of $\sigmae^2$ that satisfies Condition \ref{con:5} below, and $\betabh_{-m}\in \Rcal^{p_{-m}}$ and $\hat{\W}\in
\Rcal^{p_{-m}\times K_m}$ are obtained by solving the following optimization problems,
\begin{align} 
  (\betabh_{-m},\thetabh_m) & =  \argmin_{(\betab_{-m},\thetab_m)} \frac{1}{2n} \sum_{i=1}^n \left( y_i-\x_{i,-m}'\betab_{-m}-\hat{\f}'_{i,m}\thetab_m \right)^2 + \lambda_1 \lonenorm{\betab_{-m}},  \label{eq:10} \\
  \hat{\W} & =  \argmin ~ \lonenorm{\W}, \text{ such that } \left\lVert\frac{1}{n} \sum_{i=1}^n \x_{i,-m}\left( \hat{\f}'_{i,m}-\x'_{i,-m}\W \right) \right\rVert_{\infty} \leq \lambda_2. \label{eq:11}
\end{align}
We make a few remarks regarding our score function and compare it to \citet{ning2017general}. First, we need to estimate the latent factors in the decorrelated score function, while in the score function of \citet{ning2017general}, the covariates are fully observed. This introduces an additional layer of complexity when analyzing the statistical property of the score function. Later in Theorems \ref{con:1} and \ref{con:4}, we carefully evaluate the extra cost of estimating the latent factors. Second,  we need to involve $\gammab_m$ in (\ref{eq:10}), even under the null hypothesis $\gammab_m=\zero$. This is because, if $\gammab_m$ is removed from (\ref{eq:10}), $\betabh_{-m}$ is no longer consistent to $\betabs_{-m}$ under the alternative, which would in turn impact the power of the test. On the other hand, $\gammabh_m$ is not used in constructing the test statistic, but only $\betabh_{-m}$ is. Finally, in order to consistently estimate $\W^{\ast}$, we need to solve a non-typical Dantzig problem (\ref{eq:11}), where the latent factors are replaced by their corresponding estimators.

Next, we compute the variance of the score function by using the Fisher information. By the sandwich formula, the information matrix is 
\vspace{-0.05in}
\begin{equation*} \label{eq:12}
  \I^{\ast}_{\thetab_{m}|\betab_{-m}} = \sigmae^{-2} \left\{ \E(\f_{i,m}^{\otimes 2}) - \E(\f_{i,m}\x'_{i,-m}) \E(\x_{i,-m}^{\otimes 2})^{-1} \E(\x_{i,-m}\f_{i,m}') \right\} \in \Rcal^{K_m\times K_m}, 
\end{equation*}
which can be estimated by 
\begin{equation*} \label{eq:13}
  \hat{\I}_{\thetab_{m}|\betab_{-m}} = \sigmaeh^{-2}\left\{\frac{1}{n} \sum_{i=1}^n \hat{\f}_{i,m}^{\otimes 2} - \hat{\W}' \left(\frac{1}{n}\sum_{i=1}^n \x_{i,-m} \hat{\f}_{i,m}' \right)\right\}.
\end{equation*}
Then, our test statistic is given by 
\begin{eqnarray*}
  \T_n=\sqrt{n} \, \Ihat^{-1/2} \hat{\S}(\betahnom,\zero). 
\end{eqnarray*}
We next show that, under the null hypothesis, the asymptotic distribution of $\T_n$ is $N(\zero,\I_{K_m})$. In other words, $\T_n$ is asymptotically efficient. We first begin with a set of conditions.
\medskip

\begin{con} \label{con:1} 
For $m\in [M]$, suppose $\{(\f'_{i,m},\u'_{i,m})' \}_{i=1}^n$ are i.i.d.\ uncorrelated sub-Gaussian random vectors with zero mean. That is, $\E(\f_{i,m})=\zero$, $\E(\u_{i,m})=\zero$, and $\E(\f_{i,m}\u_{i,m}')=\zero$. Moreover, $\E\{\exp(t\alphab'\f_{i,m})\}\allowbreak \leq \exp(C \ltwonorm{\alphab}^2t^2/2)$, and $\E\{\exp (t\alphab'\u_{i,m})\} \leq \exp(C \ltwonorm{\alphab}^2t^2/2)$, for some constant $C$. In addition, for all $k\in [K_m]$, $\x_{i,-m}'\ws_k$ are i.i.d.\ sub-Gaussian such that $\E\{\exp(t\x_{i,-m}'\ws_k) \}\allowbreak \leq \exp(Ct^2/2)$, where $\ws_k$ is the $k$th column of $\Wbs$. Additionally, $\{\epsilon_i \}_{i=1}^n$ are i.i.d.\ sub-Gaussian with zero mean, and $\epsilon_i$ is uncorrelated with $(\f'_{i,m},\u'_{i,m})'$ for all $m\in [M]$.
\end{con}
\medskip

\begin{con} \label{con:2} 
For $m\in [M]$, suppose $0<c\leq\lambda_{\min}(\Lambdab_m'\Lambdab_m/p_m) \leq \lambda_{\max}(\Lambdab_m'\Lambdab_m/p_m) \leq C < \infty$, for some positive constants $c$ and $C$. 
\end{con}
\medskip

\begin{con} \label{con:3} For $m\in [M]$, $s,t\in [p_m]$, $i,j\in [n]$, suppose $\E[p_m^{-1/2}\{\u_{i,m}'\u_{j,m}-\E(\u_{i,m}'\u_{j,m}) \}^4]$ $\leq C$, and $\E \ltwonorm{p_m^{-1/2}\Lambdab_m'\u_{i,m}}^4 \leq C$. Moreover, $\supnorm{\Lambdab_m}\leq C$, $\lambda_{\min}(\Sigmab_{u_m})>c$, $\lonenorm{\Sigmab_{u_m}} \leq C$, where $\Sigmab_{u_m}=\var(\u_m)$, and $\min_{s,t\in [p_m]} \var(u_{i,m_s}u_{i,m_t})>c$.
\end{con}
\medskip

\begin{con} \label{con:4} 
Let $\ss_w=\max_{k\in [K_m]} |\supp(\ws_k)|$. Suppose $\ss_w \log (p_{-m})\{1\vee (n^{1/4}/\sqrt{p_m}) \}=o(n^{1/2})$, and $[\ss_{-m} \{\sqrt{(\log p_{-m})/n}+1/\sqrt{p_m}\}]\cdot\sqrt{\log(p_{-m})}\{1\vee (n^{1/4}/\sqrt{p_m}) \}=o(1)$.
\end{con}
\medskip

\begin{con} \label{con:5} 
Suppose $\sigmaeh^2=\sigmae^2+o_P\left(1 \right)$.
\end{con}

\begin{con} \label{con:21} Suppose $0<c\leq\lambda_{\min}(\Itrue)$.
\end{con}
\medskip

\noindent
Condition \ref{con:1} is a typical sub-Gaussian assumption for high-dimensional problems. Condition \ref{con:2} is the pervasive condition, and is common in factor analysis \citep{Fan2013b}. It requires that the latent factors affect a large number of variables. This is reasonable for a variety of multimodal data. For instance, in multi-omics data, some genetic factors are believed to impact both gene expression and DNA methylation, and in multimodal neuroimaging, some neurological factors affect both brain structures and functions. Condition \ref{con:3} imposes some technical requirements on the loading matrix and idiosyncratic component. Together, Conditions \ref{con:2} and \ref{con:3} ensure that $\f_{i,m}$ and $\u_{i,m}$ can be consistently estimated by the PCA method \citep{Fan2013b}. Condition \ref{con:4} is a sparsity condition on $\W^{\ast}$ and $\betasnom$, which requires $\ss_w$ and $\ss_{-m}$ to be much smaller than $n$. Under such a condition, $\W^{\ast}$ and $\betasnom$ can both be consistently estimated, even if the latent factors are unknown; see Lemmas 6 and 8 in the Supplementary Materials for more details. We remark that this sparsity assumption on $\W^{\ast}$ is weaker and more flexible than requiring both $\E(\x_{i,-m}^{\otimes 2})^{-1}$ and $\E(\x_{i,-m}\f'_{i,m})$ are sparse. Condition \ref{con:5} ensures the estimator of $\sigmae^2$ is consistent. Condition \ref{con:21} ensures the information matrix is invertible. We also remark that, if there is no factor in the $m$th modality, Condition \ref{con:1} reduces to the sub-Guassian assumption on $\x_{i,m}$, and Conditions \ref{con:2} and \ref{con:3} are no longer needed for that modality.

We next obtain a closed-form limiting distribution for the test statistic $T_s$. 

\begin{thm} \label{thm:1}
  Suppose Conditions \ref{con:1}--\ref{con:21} hold. Suppose $\lambda_1\asymp \sqrt{(\log p_{-m})/n}+1/\sqrt{p_m}$, and $\lambda_2 \asymp \sqrt{(\log p_{-m})/n}\{1\vee (n^{1/4}/\sqrt{p_m}) \}$. Then, under $H_0: \betabs_m=\zero$, it holds that
  \begin{equation*}
    \T_n \xrightarrow{D} N(\zero,\I_{K_m}). 
  \end{equation*}
\end{thm}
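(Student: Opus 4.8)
\medskip
\noindent\textbf{Proof proposal.} The plan is to show that, under $H_0$, the feasible statistic $\T_n$ equals an ``oracle'' version built on the true parameters plus a remainder that is $o_P(1)$, and that the oracle version obeys a multivariate central limit theorem; since $K_m$ is fixed, every $o_P(1)$ below may be read coordinatewise and Slutsky's theorem applies at the end. Under $H_0$ we have $\betabs_m=\zero$, hence $\thetabs_m=\Lambdab_m'\betabs_m=\zero$ and $y_i=\x_{i,-m}'\betasnom+\epsilon_i$, so
\begin{equation*}
\sqrt{n}\,\sigmaeh^2\,\hat{\S}(\betahnom,\zero)=\underbrace{\frac{1}{\sqrt{n}}\sum_{i=1}^{n}\epsilon_i\bigl(\hat{\f}_{i,m}-\hat{\W}'\x_{i,-m}\bigr)}_{\mathrm{I}}-\underbrace{\Bigl[\frac{1}{n}\sum_{i=1}^{n}\bigl(\hat{\f}_{i,m}-\hat{\W}'\x_{i,-m}\bigr)\x_{i,-m}'\Bigr]\sqrt{n}\,(\betahnom-\betasnom)}_{\mathrm{II}}.
\end{equation*}

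For $\mathrm{II}$, the Dantzig feasibility constraint (\ref{eq:11}) bounds the bracketed matrix by $\lambda_2$ in entrywise sup-norm, so $\ssupnorm{\mathrm{II}}\le\lambda_2\sqrt{n}\,\lonenorm{\betahnom-\betasnom}$. Invoking the estimation rate $\lonenorm{\betahnom-\betasnom}=O_P\bigl(\ss_{-m}\{\sqrt{(\log p_{-m})/n}+1/\sqrt{p_m}\}\bigr)$ for the penalized estimator (\ref{eq:10}) with PCA-estimated factors --- established in the Supplement under Conditions \ref{con:1}--\ref{con:4}, consistent with the prescribed $\lambda_1$ --- together with the prescribed $\lambda_2$, this bound becomes $\ss_{-m}\{\sqrt{(\log p_{-m})/n}+1/\sqrt{p_m}\}\sqrt{\log p_{-m}}\{1\vee(n^{1/4}/\sqrt{p_m})\}=o(1)$, which is exactly the second half of Condition \ref{con:4}. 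For $\mathrm{I}$, let $\H_m$ be the usual $K_m\times K_m$ rotation with $\hat{\f}_{i,m}\approx\H_m\f_{i,m}$ (\citet{Fan2013b}), and write $\hat{\W}'\x_{i,-m}=(\W^{\ast}\H_m')'\x_{i,-m}+(\hat{\W}-\W^{\ast}\H_m')'\x_{i,-m}$ to obtain
\begin{equation*}
\mathrm{I}=\H_m\,\frac{1}{\sqrt{n}}\sum_{i=1}^{n}\epsilon_i\bigl(\f_{i,m}-{\W^{\ast}}'\x_{i,-m}\bigr)+\frac{1}{\sqrt{n}}\sum_{i=1}^{n}\epsilon_i\bigl(\hat{\f}_{i,m}-\H_m\f_{i,m}\bigr)-\bigl(\hat{\W}-\W^{\ast}\H_m'\bigr)'\frac{1}{\sqrt{n}}\sum_{i=1}^{n}\epsilon_i\x_{i,-m}.
\end{equation*}
The second term has mean zero given $\X_m$ (on which $\epsilonb$ does not depend) with conditional variance $\sigmae^2 n^{-1}\sum_i\ltwonorm{\hat{\f}_{i,m}-\H_m\f_{i,m}}^2=O_P(1/\min(n,p_m))$ under Conditions \ref{con:2}--\ref{con:3}, hence $o_P(1)$; the third term is at most $\lonenorm{\hat{\W}-\W^{\ast}\H_m'}\cdot\ssupnorm{n^{-1/2}\sum_i\epsilon_i\x_{i,-m}}$, where the sub-Gaussian maximal inequality of Condition \ref{con:1} gives $\ssupnorm{n^{-1/2}\sum_i\epsilon_i\x_{i,-m}}=O_P(\sqrt{\log p_{-m}})$, while the consistency rate for the nonstandard Dantzig estimator (\ref{eq:11}), combined with $\ss_w\log(p_{-m})\{1\vee(n^{1/4}/\sqrt{p_m})\}=o(\sqrt{n})$ from Condition \ref{con:4}, forces the product to $o_P(1)$. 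Since $\sigmaeh^2=\sigmae^2+o_P(1)$ by Condition \ref{con:5}, we conclude $\sqrt{n}\,\hat{\S}(\betahnom,\zero)=\sigmae^{-2}\H_m\,n^{-1/2}\sum_i\epsilon_i\bigl(\f_{i,m}-{\W^{\ast}}'\x_{i,-m}\bigr)+o_P(1)$.

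The leading vector is a normalized sum of i.i.d.\ mean-zero vectors (as $\epsilon_i$ has mean zero and is uncorrelated with $(\f_{i,m}',\x_{i,-m}')'$), whose covariance, computed directly from the projection identity $\W^{\ast}=\E(\x_{i,-m}^{\otimes 2})^{-1}\E(\x_{i,-m}\f_{i,m}')$, equals $\sigmae^{4}\Itrue$; with the finite higher moments guaranteed by Condition \ref{con:1}, the Lindeberg--Feller CLT and the scaling above yield $\sqrt{n}\,\hat{\S}(\betahnom,\zero)\xrightarrow{D}N(\zero,\H_m\Itrue\H_m')$. Finally, applying the same factor-, $\W$- and $\sigmae^2$-consistency arguments to the plug-in formula for $\Ihat$ gives $\Ihat=\H_m\Itrue\H_m'+o_P(1)$; combined with $\H_m\H_m'=\I_{K_m}+o_P(1)$ (from the identifiability normalization, under which $\H_m$ converges to an orthogonal matrix) and $\lambda_{\min}(\Itrue)\ge c>0$ (Condition \ref{con:21}), it follows that $\Ihat^{-1/2}=\H_m\Itruehalf\H_m'+o_P(1)$ is well defined with probability tending to one. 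Slutsky's theorem then gives $\T_n=\sqrt{n}\,\Ihat^{-1/2}\hat{\S}(\betahnom,\zero)\xrightarrow{D}N(\zero,\I_{K_m})$, since in the limit $(\H_m\Itruehalf\H_m')(\H_m\Itrue\H_m')(\H_m\Itruehalf\H_m')=\H_m\H_m'=\I_{K_m}$.

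The main obstacle is the control of the factor-estimation error in the second step, and in particular verifying that the Dantzig program (\ref{eq:11}) --- which substitutes the unobserved $\f_{i,m}$ by its PCA estimate --- is feasible with high probability and produces an estimator of (the rotated) $\W^{\ast}$ at a rate fast enough that, even after multiplication by $\sqrt{n}$ and by the $O_P(\sqrt{\log p_{-m}})$ noise factor, it still vanishes. This is precisely where the nonstandard $\{1\vee(n^{1/4}/\sqrt{p_m})\}$ inflation factors in the rate conditions and in $\lambda_1,\lambda_2$ enter, and the bookkeeping for these second-order remainders is the technical heart of the argument; by contrast, the first step and the concluding CLT amount to the decorrelated-score calculus of \citet{ning2017general} together with a standard limit argument.
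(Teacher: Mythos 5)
Your proposal is correct and follows essentially the same route as the paper's proof: the same decomposition of the plug-in score under $H_0$ into the oracle decorrelated score plus remainder terms, with the bias term killed by the Dantzig constraint for $\hat{\W}$ together with the $L_1$ rate of $\betabh_{-m}$ and Condition \ref{con:4}, then a CLT for the oracle score, consistency of $\Ihat$, and Slutsky via Condition \ref{con:5}. The only differences are refinements rather than a new argument: you carry the PCA rotation $\H_m$ explicitly and bound the cross term $n^{-1/2}\sum_{i}\epsilon_i(\hat{\f}_{i,m}-\H_m\f_{i,m})$ directly, whereas the paper suppresses the rotation and does not display this term in its decomposition.
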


\noindent
By Theorem \ref{thm:1}, we reject the null hypothesis if $n\{\hat{\S}(\betabh_{-m},\zero) \}' \Ihat^{-1}\hat{\S}(\betabh_{-m},\zero)> \chi^2_{\alpha}(K_m,0)$, where $\chi^2_{\alpha}(K_m,0)$ is the $\alpha$-upper quantile of the $\chi^2$-distribution with $K_m$ degrees of freedom.

We next explicitly discuss the benefit and the extra cost of our factor-based test when compared with \citet{ning2017general}. The main difference is that, through latent factors, we obtain a closed-form limiting distribution and do not have to resort to bootstrap. The price we pay mainly lies in Condition \ref{con:4} and the choices of $\lambda_1$ and $\lambda_2$. Actually, the extra term $1/\sqrt{p_m}$ appearing in both Condition \ref{con:4} and $\lambda_1, \lambda_2$ reflects the estimation error caused by using $\fh_{i,m}$ to estimate $\betasnom$. The term $n^{1/4}/\sqrt{p_m}$ is due to the same reason for estimating $\ws_k$. Therefore, the choices of the tuning parameters $\lambda_1$ and $\lambda_2$ need to be adjusted accordingly, by taking into account such extra estimation errors.

We further consider three scenarios. First, when $p_m\gg n$, both $1/\sqrt{p_m}$ and $n^{1/4}/\sqrt{p_m}$ are dominated by $\sqrt{(\log p_{-m})/n}$. Therefore, the estimation errors of $\betahnom$ and $\hat{\W}$ reach the optimal oracle rate, i.e. the best rate as if the latent factors were known; see Lemmas 6 and 8 in the Supplementary Materials. In this case, using the factor estimates  actually does not incur any extra cost. The reason is that many variables are used to estimate the latent factors, and its estimation error is so small that it would not affect the inference on $\betabs_m$. Second, when $p_m=o(n)$, the estimation errors of $\betahnom$ and $\hat{\W}$ would be greater than the optimal rate. However, the central limit theorem still holds, given proper choices of $\lambda_1$ and $\lambda_2$, and more stringent sparsity conditions on $\betasnom$ and $\W^{\ast}$ in Condition \ref{con:4}. Third, in a special case where variables in all modalities are driven by exactly the same latent factors, even we perform the hypothesis test on the $m$th modality, we could use variables from all different modalities to estimate the latent factors. Then, the terms $1/\sqrt{p_m}$ and $n^{1/4}/\sqrt{p_m}$ become $1/\sqrt{p}$ and $n^{1/4}/\sqrt{p}$, respectively, which are naturally dominated by $\sqrt{(\log p_{-m})/n}$. In this case, the optimal oracle rate is again attained. Such a result can be viewed as a blessing of the dimensionality for the factor model. In summary, our method is most suitable for testing the significance of a modality containing many variables, or for multimodal data with a large number of variables driven by some common latent factors.

Next, we study the power of the proposed test under the local alternative $H_{a_n} : \betasm=\b_{m_n}$, where $\b_{m_n}$ is a sequence converging to $\zero$ as $n\to \infty$. Since we use the latent factors to transform the test on $\betasm$ to the one on $\gammasm$, we show that the local power of the test depends on $\c_{m_n}=\Lambdab_m'\b_{m_n}$. We consider the following parameter space under the local alternative, $\N = \big\{ \betabs: \betasm=\b_{m_n}, |\supp(\betasnom)|=s^{\ast}_{-m}, \text{ where } \ss_{-m}\ll n \big\}$. The next theorem gives the limiting distribution of $Q_n=\T_n'\T_n$ uniformly for all $\betabs\in \N$ under the local alternative. 
\begin{thm} \label{thm:4} Suppose the conditions of Theorem \ref{thm:1} hold. Suppose $\lambda_1\asymp \sqrt{(\log p_{-m})/n}+1/\sqrt{p_m}$, $\lambda_2\asymp \sqrt{(\log p_{-m})/n}\{1\vee (n^{1/4}/\sqrt{p_m}) \}$, $\ltwonorm{\b_{m_n}}=o(1/\sqrt{\log n})$, and $\ltwonorm{\c_{m_n}}=o(1/\sqrt{\log n})$. Then, under the $H_{a_n}$, it holds uniformly for all $\betabs\in \N$ that
\begin{equation*}
\sup_{x>0} \left| \pr(Q_n\leq x)- \pr\left\{ \chi^2(K_m,h_{m_n})\leq x \right\} \right| \to 0,
\end{equation*}
where $h_{m_{n}}=n\c_{m_n}'\I^{\ast}_{\gammab_{m}|\betanom}\c_{m_n}$.
\end{thm}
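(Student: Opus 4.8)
The plan is to build on the central limit theorem established in Theorem \ref{thm:1} and track how the local alternative $H_{a_n}: \betasm = \b_{m_n}$ perturbs the decorrelated score statistic $\hat{\S}(\betahnom,\zero)$. First I would decompose $\hat{\S}(\betahnom,\zero)$ into a mean-shift term plus a term that behaves, asymptotically, exactly as under the null. The key observation is that under $H_{a_n}$, plugging the model $\Y = \F\gammabs + \U\betabs + \epsilonb$ with $\gammasm = \thetasm = \Lambdab_m'\b_{m_n} = \c_{m_n}$ into the score function produces, after centering, a deterministic drift of order $\sqrt{n}\,\I^{\ast}_{\gammab_m|\betanom}\c_{m_n}$ in $\sqrt{n}\,\hat{\S}$, while the remaining stochastic part converges to $N(\zero,\I^{\ast}_{\gammab_m|\betanom})$ by the same arguments as in Theorem \ref{thm:1}. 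So the heuristic is $\sqrt{n}\,\Itruehalf\hat{\S}(\betahnom,\zero) \rightsquigarrow N(\sqrt{n}\,\Itruehalf\c_{m_n},\I_{K_m})$, whence $Q_n = \T_n'\T_n$ is asymptotically noncentral $\chi^2$ with $K_m$ degrees of freedom and noncentrality parameter $h_{m_n} = n\,\c_{m_n}'\I^{\ast}_{\gammab_m|\betanom}\c_{m_n}$.

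The main technical steps are as follows. (i) Re-expand $\hat{\S}(\betahnom,\zero)$ exactly as in the proof of Theorem \ref{thm:1}, isolating (a) the leading linear term in $\epsilonb$, (b) the bias term from estimating $\betasnom$ via \eqref{eq:10}, (c) the bias term from estimating $\W^{\ast}$ via \eqref{eq:11}, (d) the error from replacing $\f_{i,m}$ by $\hat{\f}_{i,m}$, and now additionally (e) the extra term $\frac1n\sum_i \x_{i,m}'\b_{m_n}\,(\f_{i,m}-{\W^\ast}'\x_{i,-m})$ and its estimated counterpart, which carries the signal. For the nuisance estimator $\betahnom$ from \eqref{eq:10}, I need that it remains $\ell_1$- and $\ell_2$-consistent for $\betasnom$ uniformly over $\N$; this is where the condition $\ss_{-m} \ll n$ and the rate $\ltwonorm{\b_{m_n}} = o(1/\sqrt{\log n})$ enter, ensuring the contribution of the nonzero $\betasm$ block to the design does not destroy the restricted eigenvalue / compatibility conditions used in Lemma 6. (ii) Show that terms (b)–(d) are $o_P(1/\sqrt{n})$ \emph{uniformly over $\betabs \in \N$}, reusing Lemmas 6 and 8 but checking that all the constants are uniform over the parameter space; the sparsity level $\ss_{-m}$ is fixed across $\N$, so this is mostly bookkeeping. (iii) Identify the signal term: show $\sqrt{n}\,\big[\frac1n\sum_i (\x_{i,m}'\b_{m_n})(\hat{\f}_{i,m}-\hat\W'\x_{i,-m})\big] = \sqrt{n}\,\I^{\ast}_{\gammab_m|\betanom}\c_{m_n} + o_P(1)$, again uniformly, using Condition \ref{con:2} to pass from $\Lambdab_m'\b_{m_n}$ to $\c_{m_n}$ and the consistency of $\hat\W$, $\hat{\f}_{i,m}$, $\sigmaeh^2$. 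The constraint $\ltwonorm{\c_{m_n}} = o(1/\sqrt{\log n})$ controls the cross-terms between the signal and the factor-estimation error (the $\hat{\f}_{i,m}$ vs.\ $\f_{i,m}$ discrepancy multiplied by $\x_{i,m}'\b_{m_n}$), which would otherwise blow up.

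Once the expansion $\sqrt{n}\,\hat{\S}(\betahnom,\zero) = \G_n + \sqrt{n}\,\I^{\ast}_{\gammab_m|\betanom}\c_{m_n} + o_P(1)$ is in hand, with $\G_n = \frac{1}{\sqrt{n}\sigmae^2}\sum_i \epsilon_i(\f_{i,m}-{\W^\ast}'\x_{i,-m}) + o_P(1)$, I would invoke a Lindeberg–Feller CLT for the triangular array $\{\epsilon_i(\f_{i,m}-{\W^\ast}'\x_{i,-m})\}$ — valid under the sub-Gaussian Condition \ref{con:1} and Condition \ref{con:21} — to get $\G_n \xrightarrow{D} N(\zero, \sigmae^2\,\I^{\ast}_{\gammab_m|\betanom})$, which after the normalization by $\sigmae^{-2}$ baked into $\S$ gives variance $\I^{\ast}_{\gammab_m|\betanom}$. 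Replacing $\Ihat$ by $\I^{\ast}_{\gammab_m|\betanom}$ (their difference is $o_P(1)$ by the same argument as in Theorem \ref{thm:1}), a continuous-mapping / Slutsky argument yields $\T_n \rightsquigarrow N(\Itruehalf\sqrt{n}\,\c_{m_n}, \I_{K_m})$ and hence $Q_n = \T_n'\T_n \rightsquigarrow \chi^2(K_m, h_{m_n})$ with $h_{m_n} = n\,\c_{m_n}'\I^{\ast}_{\gammab_m|\betanom}\c_{m_n}$. The uniformity over $\N$ in the Kolmogorov-distance statement follows because every $o_P$ and every CLT bound above depends on $\betabs$ only through $\ss_{-m}$ (fixed) and through $\ltwonorm{\b_{m_n}}, \ltwonorm{\c_{m_n}}$ (controlled by the stated rates), so a Polya-type uniformity lemma applies: pointwise convergence to a continuous limit plus uniformly bounded noncentrality upgrades to uniform convergence of the CDFs.

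I expect step (ii)–(iii), controlling the nuisance and factor-estimation errors \emph{uniformly} over $\N$ while the design now contains the nonzero $\betasm$-block, to be the main obstacle: one must verify that the penalized-estimation guarantees of Lemmas 6 and 8 survive the perturbation of order $\ltwonorm{\b_{m_n}}$ and that the cross-term between the $O(1)$-scale factor-estimation error and the signal $\x_{i,m}'\b_{m_n}$ is genuinely negligible — this is precisely the role of the $o(1/\sqrt{\log n})$ rates, and getting the bookkeeping of these interacting error terms right is the delicate part.
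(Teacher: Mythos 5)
Your expansion of the score under $H_{a_n}$ --- controlling the nuisance and factor-estimation remainders uniformly over $\N$ and extracting the drift $\I^{\ast}_{\gammab_m|\betanom}\c_{m_n}$ --- mirrors the paper's own first step (its Lemmas \ref{lemma:B6}, \ref{lemma:B7} and \ref{lemma:B8}), and the role you assign to $\ltwonorm{\b_{m_n}},\ltwonorm{\c_{m_n}}=o(1/\sqrt{\log n})$ is essentially the right one. The genuine gap is in your final distributional step. The theorem asserts $\sup_{x>0}\left|\pr(Q_n\leq x)-\pr\{\chi^2(K_m,h_{m_n})\leq x\}\right|\to 0$ uniformly over $\N$, where the approximating law itself moves with $n$; and under the stated hypotheses the noncentrality $h_{m_n}=n\c_{m_n}'\I^{\ast}_{\gammab_m|\betanom}\c_{m_n}$ is \emph{not} uniformly bounded: $\ltwonorm{\c_{m_n}}=o(1/\sqrt{\log n})$ permits $\ltwonorm{\c_{m_n}}\asymp n^{-\phi}$ with $\phi<1/2$, so $h_{m_n}\to\infty$ (this is exactly the regime of Corollary \ref{cor:1}(c)). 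Hence your premise of ``uniformly bounded noncentrality'' fails, there is no fixed limiting distribution, and the route Lindeberg--Feller CLT plus Slutsky plus a P\'olya-type lemma cannot deliver the claimed uniform-in-$x$, uniform-in-$\betabs$ statement: those tools compare with a single continuous limit, not quantitatively with a drifting sequence of noncentral $\chi^2$ targets.

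What is needed instead is a non-asymptotic normal approximation uniform over convex sets. The paper applies Bentkus's Berry--Esseen bound (Lemma \ref{lemma:C4}) to the standardized i.i.d.\ summands $\xib_i\propto \Itruehalf\,\epsilon_i(\f_{i,m}-{\W^{\ast}}'\x_{i,-m})$, whose third moments are controlled via Conditions \ref{con:1} and \ref{con:21}, and then specializes the convex sets to balls centered at $\sqrt{n}\,\I^{\ast^{1/2}}_{\gammab_m|\betanom}\c_{m_n}$; this gives the sup-over-$x$ approximation of the leading quadratic form by $\chi^2(K_m,h_{m_n})$ regardless of whether $h_{m_n}$ converges, diverges, or oscillates, with constants independent of $\betabs\in\N$. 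A second ingredient you omit is needed to transfer this to $Q_n$ itself, which differs from the leading quadratic form by $o_P(1)$: an anti-concentration (equicontinuity) bound for the noncentral $\chi^2$ family that is uniform in the noncentrality parameter (Lemma \ref{lemma:C5}); continuity of a fixed limit law does not suffice when $h_{m_n}$ varies. With your CLT-plus-P\'olya step replaced by these two ingredients, the remainder of your outline coincides with the paper's proof.
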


\noindent
Since $h_{m_n}\asymp n \ltwonorm{\c_{m_n}}^2$, Theorem \ref{thm:4} implies that the local power of our test essentially depends on $\ltwonorm{\c_{m_n}}$. If we let $\ltwonorm{\c_{m_n}}=Cn^{-\phi_{\gamma_m}}$, the local power is to exhibit some transition behavior depending on the value of $\phi_{\gamma_m}$, which is summarized in the next corollary. 
\begin{cor}
\label{cor:1}
Suppose the conditions of Theorem \ref{thm:4} hold. Then, 
\begin{enumerate}[(a)]
    \item $\lim_{n\to \infty} \sup_{\betabs\in \N} \sup_{x>0} |\pr(Q_n\leq x) - \pr(\chi^2(K_m,0)\leq x)| \to 0$, if $\phi_{\gamma_m}>1/2$; 
    \item $\lim_{n\to \infty} \sup_{\betabs\in \N} \sup_{x>0} |\pr(Q_n\leq x) - \pr(\chi^2(K_m,h)\leq x)| \to 0$, if $\phi_{\gamma_m}=1/2$; 
    \item $\liminf_{n\to \infty} \sup_{\betabs\in \N} \pr(Q_n> x) =1$, if $\phi_{\gamma_m}<1/2$; 
    \end{enumerate}
where $h=\lim_{n\to\infty}n\c_{m_n}'\I^{\ast}_{\gammab_m|\betanom}\c_{m_n}$ in (b), and (c) holds for any $x>0$.
\end{cor}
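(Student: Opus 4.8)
The plan is to obtain Corollary \ref{cor:1} directly from Theorem \ref{thm:4} by tracking the scalar noncentrality parameter $h_{m_n}=n\c_{m_n}'\Itrue\c_{m_n}$. The conditions of Theorem \ref{thm:4} are assumed, and the parametrization $\ltwonorm{\c_{m_n}}=Cn^{-\phi_{\gamma_m}}$ is consistent with $\ltwonorm{\c_{m_n}}=o(1/\sqrt{\log n})$ (here $\phi_{\gamma_m}>0$ is forced, since the local-alternative regime requires $\b_{m_n}\to\zero$). Hence Theorem \ref{thm:4} applies and gives
\begin{equation*}
\sup_{\betabs\in\N}\,\sup_{x>0}\left|\pr(Q_n\le x)-\pr\{\chi^2(K_m,h_{m_n})\le x\}\right|\longrightarrow 0.
\end{equation*}
Within $\N$ the coefficient $\betabs_m=\b_{m_n}$ is fixed, so $\c_{m_n}$ and $h_{m_n}$ do not depend on $\betabs$; the rest of the argument concerns only the scalar sequence $h_{m_n}$ and elementary continuity properties of the noncentral $\chi^2$ law.

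I would first sandwich $h_{m_n}$. Condition \ref{con:21} gives $\lambda_{\min}(\Itrue)\ge c>0$, while the identifiability normalization $\var(\f_m)=\I_{K_m}$, together with $\sigmae^2$ being a fixed positive constant, gives $\Itrue\preceq\sigmae^{-2}\I_{K_m}$ and hence $\lambda_{\max}(\Itrue)\le\sigmae^{-2}<\infty$. Consequently $c\,n\ltwonorm{\c_{m_n}}^2\le h_{m_n}\le\sigmae^{-2}n\ltwonorm{\c_{m_n}}^2$, i.e.\ $h_{m_n}\asymp n^{1-2\phi_{\gamma_m}}$.

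Next I would split into the three regimes. (a) If $\phi_{\gamma_m}>1/2$, then $h_{m_n}\to 0$; since $\lambda\mapsto\pr\{\chi^2(K_m,\lambda)\le x\}$ is continuous and nonincreasing and tends, as $\lambda\downarrow 0$, to the continuous function $x\mapsto\pr\{\chi^2(K_m,0)\le x\}$, P\'olya's theorem upgrades this to uniform convergence in $x$, so $\sup_{x>0}|\pr\{\chi^2(K_m,h_{m_n})\le x\}-\pr\{\chi^2(K_m,0)\le x\}|\to 0$; a triangle inequality with the display above proves (a). (b) If $\phi_{\gamma_m}=1/2$, then $h_{m_n}$ stays in the bounded interval $[cC^2,\sigmae^{-2}C^2]$ and, by hypothesis, converges to $h=\lim_n h_{m_n}$; the same continuity-plus-P\'olya argument gives $\sup_{x>0}|\pr\{\chi^2(K_m,h_{m_n})\le x\}-\pr\{\chi^2(K_m,h)\le x\}|\to 0$, and (b) follows. (c) If $\phi_{\gamma_m}<1/2$, then $h_{m_n}\to\infty$; for each fixed $x>0$, Chebyshev's inequality applied to $\chi^2(K_m,h_{m_n})$ (mean $K_m+h_{m_n}$, variance $2K_m+4h_{m_n}$) shows $\pr\{\chi^2(K_m,h_{m_n})>x\}\to 1$, and combining with the display gives $\inf_{\betabs\in\N}\pr(Q_n>x)\to 1$, which in particular yields the stated $\liminf_n\sup_{\betabs\in\N}\pr(Q_n>x)=1$.

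On the difficulty: this corollary is essentially bookkeeping on top of Theorem \ref{thm:4} and contains no genuinely hard step. The places needing care are (i) the two-sided bound $h_{m_n}\asymp n\ltwonorm{\c_{m_n}}^2$, whose upper half relies on the normalization $\var(\f_m)=\I_{K_m}$ (without which $\lambda_{\max}(\Itrue)$ need not be bounded) and whose lower half relies on Condition \ref{con:21}; and (ii) in case (b), recognizing that a nondegenerate noncentral $\chi^2$ limit is available only because the statement itself postulates the limit $h=\lim_n n\c_{m_n}'\Itrue\c_{m_n}$ — without it, the arguments of (a) and (c) only pin $h_{m_n}$ to a bounded interval, leaving subsequential limits.
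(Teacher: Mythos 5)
Your proof is correct, and it shares the paper's overall skeleton (apply Theorem \ref{thm:4} uniformly over $\N$, then show the noncentral $\chi^2$ probabilities converge), but the execution differs in two ways worth noting. For the convergence step the paper works only with case (b) and argues via the explicit series representation $F(x;k,\lambda)=e^{-\lambda/2}\sum_j \frac{(\lambda/2)^j}{j!}F(x;k+2j)$, bounding $\sup_x|F(x;K_m,h_{m_n})-F(x;K_m,h)|$ termwise by $|e^{-h_{m_n}/2}-e^{-h/2}|e^{h_{m_n}/2}+e^{-h/2}\sum_j|(h_{m_n}/2)^j-(h/2)^j|/j!$; you instead invoke continuity of the noncentral $\chi^2$ family in its noncentrality parameter together with P\'olya's theorem (continuity of the limit CDF upgrades pointwise to uniform convergence). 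Both are valid; the series route is self-contained and gives an explicit bound, while your route is shorter and generalizes immediately to (a). Second, the paper dismisses (a) and (c) as ``similar'' and never writes them, whereas you actually supply the missing ingredients: the two-sided sandwich $c\,n\ltwonorm{\c_{m_n}}^2\le h_{m_n}\le \sigmae^{-2}n\ltwonorm{\c_{m_n}}^2$ (lower bound from Condition \ref{con:21}, upper bound from $\Itrue\preceq\sigmae^{-2}\E(\f_{i,m}^{\otimes 2})=\sigmae^{-2}\I_{K_m}$, since the projection term in the information matrix is positive semidefinite), which pins down $h_{m_n}\to 0$ or $h_{m_n}\to\infty$ in the respective regimes, and the Chebyshev bound $\pr\{\chi^2(K_m,h_{m_n})\le x\}\le (2K_m+4h_{m_n})/(K_m+h_{m_n}-x)^2\to 0$ for (c). Your observations that $h_{m_n}$ is constant over $\N$ (since $\betabs_m=\b_{m_n}$ is fixed there) and that in (b) the existence of $h$ is postulated rather than derived are also accurate and are implicit but unstated in the paper. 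In short: correct, same high-level strategy, a more elementary continuity argument in place of the series manipulation, and a more complete treatment of cases (a) and (c).
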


\noindent
We make some remarks. First, Corollary \ref{cor:1} shows that, the local power is to converge to the type I error if $\phi_{\gamma_m}>1/2$; to a non-central $\chi^2$-distribution if $\phi_{\gamma_m}=1/2$; and to 1 if $\phi_{\gamma_m}<1/2$. Such a transition behavior is analogous to the classical local power results, which showed that the root-$n$ local alternative is the transition point of the local power \citep{Vaart2000}. Second, the existing debiased method \citep{van2014asymptotically} and the decorrelated method \citep{ning2017general} only established the root-$n$ local power results when the dimension of the parameters being tested is fixed. Moreover, even though \cite{ning2017general} utilized a multiplier bootstrap method to extend their test from a single parameter to arbitrarily many parameters, they only studied the local power when testing a single parameter. Our local power result differs in that we allow the dimension of $\betasm$ to grow with $n$, whereas we fix the dimension of $\gammasm$.  Finally, Theorem \ref{thm:4} shows that the local power depends on the magnitude of $\gammasm$, or $\c_{m_n}=\Lambdab_m'\b_{m_n}$. This is again due to that we transform the test of $\betasm$ to that of $\gammasm$. Therefore, the power of our test depends on the relation between the loadings and where the alternative hypothesis occurs.

Next, we give some specific examples to further illustrate the power behavior of our proposed test. To simplify the discussion, we set $K_m=1$.
\medskip

\begin{exa} \label{exa:1} Let $\Lambdab_m=(D_L,1,\ldots,1)'$ and $\b_{m_n}=(D_Sn^{-1/2},0,\ldots,0)'$, where $D_L$ and $D_S$ are two constants. In this case, $\ltwonorm{\c_{m_n}}=D_SD_Ln^{-1/2}$, and thus the power of our test depends on the product $D_SD_L$. On the contrary, even one had known apriori that the alternative only occurs at the first coordinate, and performs a debiased or decorrelated test on that coordinate, its local power depends on $D_S$. Therefore, when $D_L$ is large and $D_S$ is small, our testing method gains power. On the other hand, when $D_L$ is small and $D_S$ is large, the alternative methods may be more powerful.
\end{exa}
\medskip

\begin{exa} \label{exa:2} 
Let $\Lambdab_m=(1,1,\ldots,1)'$, and $\b_{m_n}=(C_1n^{-2/3},C_2n^{-2/3}\ldots,C_Ln^{-2/3},0,\ldots,0)'$. In this case, if $L\gg n^{1/6}$, $\ltwonorm{\c_{m_n}}\gg n^{-1/2}$, then the power of our test converges to 1. On the contrary, if one performs a debiased or decorrelated test on each element of $\betasm$, there is no power. Our testing method gains power in this example too. 
\end{exa}
\medskip

\begin{exa} \label{exa:3} 
Let $\Lambdab_m=(0,1,\ldots,1)'$, and $\b_{m_n}=(c_n,0,\ldots,0)'$. In this case, no matter how large $c_n$ is, our testing method has no power to detect the alternative, because $\c_{m_n}=\zero$.
\end{exa}
\medskip 

As we have seen in these examples, when transforming the test from $\betasm$ to $\gammasm$, our method does not necessarily lose power, but can gain power in some situations. For instance, in Example \ref{exa:1}, the variables with large loadings on the latent factors have nonzero coefficients, while in Example \ref{exa:2}, many variables with nonzero loadings have nonzero coefficients. In such cases, our test gains power. On the other hand, in Example \ref{exa:3}, the product of the loadings and the nonzero coefficients is small, then our test has little power.

\section{Hypothesis test of linear combinations of predictors of one or more modalities}
\label{sec:test-lincomb}

Another important question in multimodal data analysis is to test if some linear combinations of predictors, within the same modality or across different modalities, is significantly correlated with the response. This is because multimodal data often measures different aspects of related quantities. For instance, in multi-omics studies, expression data measures how genes are expressed, methylation data measures how DNA molecules are methylated, and both data may be related to the same set of genes. In multimodal  neuroimaging analysis, brain structures, functions, and chemical constituents of the same brain regions are often measured simultaneously. As such, it is of great scientific interest to test if various measurements on a particular gene or brain region are associated with the outcome.

\citet{shi2019linear} considered a similar testing problem in a high-dimensional generalized linear model for a single modality data. They derived the corresponding partially penalized likelihood ratio test, score test and Wald test, and showed that the three tests are asymptotically equivalent. They allowed the dimension of the model to grow with the sample size, as long as the dimension of the subvector being tested and the number of linear combinations are smaller than the sample size. Our method differs from \citet{shi2019linear} in several ways. \citet{shi2019linear} treated the design matrix $\X$ as fixed, while we treat $\X$ as i.i.d.\ random realizations from some distributions. More importantly, we do not directly use the observed $\X$, but instead perform a factor decomposition and use the decorrelated idiosyncratic components as the pseudo design matrix. We explicitly show in Theorem \ref{thm:2} that such a factor-adjusted step leads to less stringent conditions to reach the variable selection and estimation consistency. Moreover, since variable selection consistency is needed to correctly calculate the variance of the test statistic, as shown in Theorem \ref{thm:3}, our method also requires less stringent conditions to establish the limiting distribution of the test statistic. Moreover, our model concerns with data with multiple modalities, instead of a single modality as in \citet{shi2019linear}. High correlations are commonly observed in multimodal data, and as such the factor-adjusted decorrelation step becomes essential. Relatedly, \citet{ZhuBradic2018} proposed a test for a linear combination of predictors under a unimodal linear regression model. Even though they did not restrain the size or the sparsity of the model, they only considered a single linear combination, and required the eigenvalues of the covariate covariance matrix $\var(\x)$ to be bounded. By contrast, both \citet{shi2019linear} and we consider jointly testing multiple linear combinations of predictors, and we do not require the eigenvalues of $\var(\x)$ to be bounded. We further numerically compare with \citet{shi2019linear} and \citet{ZhuBradic2018} in Section \ref{sec:sim-test-lincomb}.

Formally, we consider testing the following pair of hypotheses:
\begin{eqnarray} \label{eqn:hypothesis-lincomb}
  H_0:\A\betabs_T=\b \quad \textrm{ versus } \quad H_a: \A\betabs_T\neq \b, 
\end{eqnarray}
where $\A\in \Rcal^{r\times t}$, $\b\in \Rcal^r$, $\betabs_T\in \Rcal^t$ is a subvector of $\betabs$, and $T\subset [p]$ is a low-dimensional index set with $|T|=t < n$. This simultaneously tests $r$ linear combinations of $\betabs_T$, with $r<n$. We next develop a factor-adjusted Wald test. 

To construct the test statistic, we first consider a penalized least squares problem,
\begin{equation} \label{eq:14}
  (\gammaha,\betaha)=\argmin_{(\gammab,\betab)} \frac{1}{2n} \sum_{i=1}^n \left( y_i-\Fh_i'\gammab-\Uh_i'\betab \right)^2 + \lambda_a \sum_{j\not\in T} p(|\beta_j|).
\end{equation}
This is essentially the same as \eqref{eq:7}, except that, instead of penalizing all variables in $\betab$, we do not penalize $\beta_j$ for $j \in T$. This is to avoid introducing bias when estimating $\betas_j$ for $j\in T$, which is needed for Theorem \ref{thm:3}. A similar idea was also adopted in \citet{shi2019linear}.

Given $\betabh_a$, our factor-adjusted Wald test statistic is given by
\begin{equation*} 
T_w=(\A\betahat-\b)'(\A\Omegabh_T\A')^{-1}(\A\betahat-\b)/\sigmaeh^2,
\end{equation*} 
where $\betahat$ is the sub-vector of $\betaha$ with indices in $T$, $\Omegabh_T$ is the first $T$ rows and columns of 
\vspace{-0.05in}
\begin{equation*}
  \Omegabh_{T\cup \Sha}= n
  \begin{pmatrix}
    \Uh_T'\Uh_T & \Uh_T'\Uh_{\Sha} \\
    \Uh_{\Sha}'\Uh_T & \Uh_{\Sha}'\Uh_{\Sha}
  \end{pmatrix}^{-1},
\end{equation*}
$\Sha=\{j\in T^c: \betah_{a,j}\neq 0 \}$, and $\sigmaeh^2$ is any consistent estimator of $\sigmae^2$. In this test statistic, $\Sha$ plays a critical role in calculating the variance of $\A\betahat-\b$. In fact, $\hat{S}_a$ needs to be consistent to $S_a=\{j\in T^c:\betas_j\neq 0 \}$ in order for the variance to be valid. Such a consistency is guaranteed by Theorem \ref{thm:2}. 

We next present a set of regularity conditions. 

\medskip
\begin{con} \label{con:6}
Suppose $c\leq \lambda_{\min}\{ \E(\u^{\otimes 2}) \} \leq \lambda_{\max}\{ \E(\u^{\otimes 2}) \} \leq C$ for some postive constants $c$ and $C$.
\end{con}
\medskip

\begin{con} \label{con:7} 
Suppose $\Lsupnorm{\E(\u_{\ts}^{\otimes 2})^{-1}}\leq C$.
\end{con}
\medskip

\begin{con} \label{con:8} 
Suppose $\Lsupnorm{\E(\u_{(\ts)^c}\u'_{\ts})\{\E(\u_{\ts}^{\otimes 2})^{-1}\}}\leq C$.
\end{con}
\medskip

\begin{con} \label{con:9} 
Suppose $d_n= \min\{|\betas_j|:\betas_j\neq 0 \}/2\gg \lambda_a\gg \delta_n$, where $\delta_n=\keyrate$, $p_{\min}=\min_{m\in [M]} p_m$, and $\lambda_a \dot{p}(d_n)=o(\delta_n)$, where $\dot{p}$ is the first derivative. 
\end{con}
\medskip

\noindent
We first note that Conditions \ref{con:6}--\ref{con:8} are imposed on $\u$, instead of on $\x$. Since $\u$ can be viewed as the residual of $\x$ after the latent factors are removed, the correlations among the variables in $\u$ are much weaker than those in $\x$. In particular, Conditions \ref{con:6} and \ref{con:7} are needed to avoid singularity of $\E(\u^{\otimes 2})$ and $\E(\u_{\ts}^{\otimes 2})$. Condition \ref{con:8} is the well-known irrepresentable condition, which is necessary for establishing the variable selection consistency. \cite{shi2019linear} required such a condition to hold for the Gram matrix $\X'\X$, which essentially requires the correlations among $\X$ must be small. This condition hardly holds for multimodal data. By contrast, we only impose such a condition on $\E(\u^{\otimes 2})$, which requires the idiosyncratic components not to be highly correlated. This condition is well accepted in the factor model literature. Indeed, when an exact factor model is assumed, $\E(\u^{\otimes 2})$ is a diagonal matrix, then Condition \ref{con:6} naturally holds. 

We now establish the variable selection and estimation consistency of the estimator $\betabh_a$ in (\ref{eq:14}), which is essential for deriving the asymptotic distribution of $T_w$. 

\begin{thm} \label{thm:2}
  Suppose Conditions \ref{con:1}--\ref{con:3} and \ref{con:6}--\ref{con:9} hold. Then there exists a solution $(\gammabh_a,\betabh_a)$ of (\ref{eq:14}) such that, with probability tending to 1, the following results hold: 
  \begin{enumerate}[(a)]
  \item (sign consistency) $\mathrm{sign}(\betabh_a)=\mathrm{sign}(\betabs)$; 
  \item ($L_{\infty}$ consistency) $\supnorm{\betabh_{a,T\cup S_a}-\betabs_{T\cup S_a}}=O_P\left(\delta_n \right)$;
  \item ($L_2$ consistency)  $\ltwonorm{\betabh_{a,T\cup S_a}-\betabs_{T\cup S_a}}=O_P\left(\sqrt{t+s_a}\delta_n\right)$, where $s_a=|S_a|$;
  \item (asymptotic expansion)
    $\sqrt{n}(\betabh_{a,T\cup S_a}-\betabs_{T\cup S_a})=n^{-1/2}\K_n^{-1} \U'_{T\cup S_a} \epsilonb+o_P\left(1 \right)$,
    where $\K_n=(1/n)\Uts'\Uts$, if we further have that $p_{\min}\gg n^{3/2}$, and $\sqrt{n}\lambda_a\dot{p}(d_n)=o(1)$.
  \end{enumerate}
\end{thm}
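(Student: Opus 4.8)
The plan is to follow the classical oracle-estimator-plus-KKT-verification route for folded-concave penalized least squares, carried out with the estimated regressors $(\Fh,\Uh)$ in place of the unobserved $(\F,\U)$. Write $S=\supp(\betabs)$, so that $\ts=T\cup S_a$ contains $S$ and $s_a=|S_a|$. First I would define the \emph{oracle estimator} $(\gammat,\betabt)$ as a minimizer of the objective in \eqref{eq:14} restricted to the block $\ts$, that is, with $\beta_j$ held at $0$ for every $j\notin\ts$; its normal equations are $n^{-1}\Fh'(\Y-\Fh\gammat-\Uhts\betabt_{\ts})=\zero$, $n^{-1}\Uh_j'(\Y-\Fh\gammat-\Uhts\betabt_{\ts})=0$ for $j\in T$, and $n^{-1}\Uh_j'(\Y-\Fh\gammat-\Uhts\betabt_{\ts})=\lambda_a\dot p(|\betabt_j|)\,\mathrm{sign}(\betabt_j)$ for $j\in S_a$. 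The proof then splits into two halves: (i) establish (b), (c) and (d) for $(\gammat,\betabt)$; and (ii) show that, with probability tending to one, $(\gammat,\betabt)$ is a local solution of \eqref{eq:14}, which gives (a) and transfers (b)--(d) to $(\gammaha,\betaha)$.

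For half (i), I would profile $\gammab$ out of the oracle normal equations --- using that $\Fh_m'\Uh_m=\zero$ exactly within each modality and that $n^{-1}\Fh'\Uhts$ is $o_P(1)$ in max norm across modalities by the PCA-consistency lemmas --- to obtain, up to $o_P$ remainders,
\[
\betabt_{\ts}-\betasts=\Big(n^{-1}\Uhts'\Uhts\Big)^{-1}\Big\{n^{-1}\Uhts'\big(\Y-\Fh\gammat-\Uhts\betasts\big)-\lambda_a\dot{\mathbf p}_{\ts}\Big\},
\]
where $\dot{\mathbf p}_{\ts}$ carries $\dot p(|\betabt_j|)\,\mathrm{sign}(\betabt_j)$ on $S_a$ and $0$ on $T$. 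The residual at the truth decomposes as $\Y-\Fh\gammat-\Uhts\betasts=\epsilonb+(\F\gammabs-\Fh\gammat)+(\Uts-\Uhts)\betasts$, the last two ``factor-estimation discrepancy'' terms being driven by $\Fh-\F\mathbf{H}$ and $\Uh-\U$ (the rotation $\mathbf{H}$ cancelling against $\gammat$). Using Conditions \ref{con:1}--\ref{con:3} and the PCA-consistency results (analogues of those in \citet{Fan2013b}, established as lemmas in the Supplement) one bounds $\supnorm{n^{-1}\Uts'\epsilonb}=O_P(\sqrt{(\log p)/n})$, shows the discrepancy contributes an $O_P(1/\sqrt{p_{\min}})$ piece plus a $\{1\vee(n^{1/4}/\sqrt{p_{\min}})\}$ inflation of the $\sqrt{(\log p)/n}$ piece, and shows $n^{-1}\Uhts'\Uhts$ is invertible with smallest eigenvalue bounded below (Condition \ref{con:6}) and within $o_P(1)$ of $\K_n$. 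Since $|\betabt_j|$ lies near $|\betas_j|\ge 2d_n$ on $S_a$ and $\lambda_a\dot p(d_n)=o(\delta_n)$ (Condition \ref{con:9}), the penalty term is negligible, so $\supnorm{\betabt_{\ts}-\betasts}=O_P(\delta_n)$ with $\delta_n=\keyrate$, which is (b); (c) then follows from $\ltwonorm{v}\le\sqrt{t+s_a}\,\supnorm{v}$ for $v$ supported on $\ts$. For (d), the strengthened hypothesis $p_{\min}\gg n^{3/2}$ makes the discrepancy contribution $o_P(n^{-1/2})$ while $\sqrt n\,\lambda_a\dot p(d_n)=o(1)$ kills the penalty term; replacing $\Uhts$ by $\Uts$ and $n^{-1}\Uhts'\Uhts$ by $\K_n$ (their difference is $o_P(n^{-1/2})$ after multiplication by $\sqrt n$) then yields the stated asymptotic expansion.

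For half (ii), the full subgradient conditions of \eqref{eq:14} at $(\gammat,\betabt)$ hold automatically for the $\gammab$-block and for $j\in\ts$ by construction; and since $|\betabt_j|$ is bounded away from $0$ on $S_a$ while $\ddot p$ is bounded, the restricted Hessian stays positive definite, so $(\gammat,\betabt)$ is a genuine local minimizer (the gap $d_n\gg\lambda_a$ precludes spurious stationary points). What remains is strict dual feasibility on the null block: for every $j\notin\ts$,
\[
\Big|n^{-1}\Uh_j'\big(\Y-\Fh\gammat-\Uhts\betabt_{\ts}\big)\Big|<\lambda_a\dot p(0+).
\]
Substituting the residual expansion and the formula from half (i), the left side is a noise term $n^{-1}\U_j'\epsilonb=O_P(\sqrt{(\log p)/n})$, PCA-discrepancy terms of order $\delta_n$, and a ``representation'' term $n^{-1}\Uh_j'\Uhts(n^{-1}\Uhts'\Uhts)^{-1}n^{-1}\Uhts'\epsilonb$ whose magnitude is controlled by the $L_\infty$ (row-sum) bound of Condition \ref{con:8} on $\E(\u_{(\ts)^c}\u'_{\ts})\{\E(\u_{\ts}^{\otimes 2})^{-1}\}$ times a vector whose $\supnorm{\cdot}$ is $O_P(\sqrt{(\log p)/n})$, hence again $O_P(\sqrt{(\log p)/n})$. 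Because $\lambda_a\gg\delta_n$ (Condition \ref{con:9}), the whole left side is $o_P(\lambda_a)=o_P(\lambda_a\dot p(0+))$, so the strict inequality holds with probability tending to one. This yields $\Sha=S_a$; combined with the per-coordinate bound $\supnorm{\betabt_{\ts}-\betasts}=O_P(\delta_n)=o_P(d_n)$, the signs of the nonzero coordinates are recovered, so $\mathrm{sign}(\betaha)=\mathrm{sign}(\betabs)$, which is (a); and $(\gammat,\betabt)$ is then identified with $(\gammaha,\betaha)$, transferring (b)--(d).

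The hard part will be the bookkeeping in half (i): tracing how the PCA errors $\Uh-\U$ and $\Fh-\F\mathbf{H}$ (with $\mathbf{H}$ absorbed into $\gammat$) propagate through the cross-products $n^{-1}\Uhts'\epsilonb$, $n^{-1}\Uhts'(\F\gammabs-\Fh\gammat)$ and $n^{-1}\Uhts'\Uhts$, and verifying that they combine into exactly the rate $\keyrate$ rather than something larger --- this is where the $1/\sqrt{p_{\min}}$ and $n^{1/4}/\sqrt{p_{\min}}$ factors originate, and it is also what forces the stronger requirement $p_{\min}\gg n^{3/2}$ in part (d). A second, milder subtlety is making the irrepresentable-type step of half (ii) robust to replacing the population matrices $\E(\u_{\ts}^{\otimes 2})$ and $\E(\u_{(\ts)^c}\u'_{\ts})$ by their factor-adjusted sample analogues on an index set of growing cardinality; the sub-Gaussian Condition \ref{con:1} and the bounded size of $\ts$ supply the uniform Gram-perturbation control needed there.
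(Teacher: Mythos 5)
Your proposal is correct and follows essentially the same route as the paper's proof: a restricted (oracle) KKT construction on $T\cup S_a$ together with the factor block, the same residual decomposition isolating $\epsilonb$, $\F\gammabs-\Fh\gammab$ and $(\U-\Uh)\betabs$ to get the rate $\delta_n$, strict dual feasibility on $(\ts)^c$ via Condition \ref{con:8} and $\lambda_a\gg\delta_n$, and the strengthened $p_{\min}\gg n^{3/2}$, $\sqrt{n}\lambda_a\dot p(d_n)=o(1)$ for part (d). The only cosmetic difference is that you write an explicit expansion for the oracle estimator after profiling out $\gammab$, whereas the paper argues existence of a KKT solution in a shrinking $L_\infty$-neighborhood of $(\H'\gammabs,\betabs_{\ts})$ and bounds the factor-estimation remainders in separate lemmas; the bookkeeping you flag as the hard part is exactly what those lemmas carry out.
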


We again explicitly examine the benefit and the extra cost of our factor-based test compared with \citet{shi2019linear}. The main difference is that we obtain the variable selection and estimation consistency under much weaker conditions than \citet{shi2019linear}. The price we pay lies in $\delta_n$, which reflects the convergence rates in (b) and (c) of Theorem \ref{thm:2}. Particularly, the component $n^{1/4}/\sqrt{p_{\min}}$ in $\delta_n$ is due to the factor estimation. We consider two scenarios. First, when all data modalities have a large number of variables, i.e. $p_{\min}\gg n^{1/2}$, then $\delta_n=\sqrt{(\log p)/n}$, which makes the convergence rates in (b) and (c) to be minimax optimal. This is because when there are enough variables to estimate the latent factors well, the extra factor estimation error becomes so small that it would not affect the estimation error on $\betabh_a$. Second, when one modality has only a small number of variables, i.e. $p_m=o(n^{1/4})$ for some $m \in [M]$, estimating the latent factors in that modality becomes challenging, and the resulting estimation error would slow the convergence of $\betabh_a$. In this case, one possible alternative solution is to skip factor decomposition for that particular modality, but directly use $\X_m$ in (\ref{eq:14}) and solve
\begin{equation*}
  (\gammaha,\betaha)=\argmin_{(\gammab,\betab)} \frac{1}{2n} \sum_{i=1}^n \left( y_i-\X_{i,m}'\betab_m-\Fh_{i,-m}'\gammab-\Uh_{i,-m}'\betab_{-m} \right)^2 + \lambda_a \sum_{j\not\in T} p(|\beta_j|),
\end{equation*}
where $\X_{i,m}'$, $\Fh_{i,-m}'$ and $\Uh'_{i,-m}$ denote the $i$th row of $\X_{-m}$, $\Fh_{-m}$ and $\Uh_{-m}$, respectively. Finally, we note that the variable selection and estimation consistency of $\betabh$ in (\ref{eq:7}) is directly implied by Theorem \ref{thm:2} if we treat $T$ as the empty set.

Next, we study the asymptotic distribution of our test statistic $T_w$, and show that it can be uniformly approximated by a $\chi^2$-distribution under both $H_0$ and $H_a$. We need two more regularity conditions.

\medskip
\begin{con} \label{con:10}
  Suppose $\ltwonorm{\h_n}=O(\sqrt{r/n} )$, and $\lambda_{\max}\{ (\A\A')^{-1} \} \leq C$ for some constant $C$, where $\h_n=\A\betabs_T-\b$.
\end{con}
\medskip

\begin{con} \label{con:11} Suppose ${r^{1/4}}{n^{-1/2}} \E |\u'_{\ts}\Sigmab_{u,\ts}^{-1}\u_{\ts}|^{3/2}\to 0$, where $\Sigmab_{u,\ts}^{-1}$ is the inverse of the submatrix of $\Sigmab_u$ with rows and columns in $\ts$.
\end{con}
\medskip

\noindent Condition \ref{con:10} regulates the local alternative $\h_n$ and avoids singularity of $\A\A'$. Condition \ref{con:11} is a Lyapunov condition to ensure the asymptotic normality of $\betabh_{a,\ts}$, which is the key to establish the $\chi^2$-approximation. 

\begin{thm} \label{thm:3} Suppose the conditions of Theorem \ref{thm:2} and Conditions \ref{con:10} and \ref{con:11} hold, $p_{\min}\gg n^{3/2}$, $\sqrt{n}\lambda_a\dot{p}(d_n)=o(1)$, and $t+s_a=o(n^{1/3})$. Then it holds that
  \begin{equation*}
    \sup_x \left| \pr(T_w\leq x)-\pr\{\chi^2(r,\nu_n)\leq x\} \right| \to 0, 
  \end{equation*}
  where $\nu_n=n\h_n'(\A\Omegab_T\A')^{-1}\h_n/\sigmae^2$, $\Omegab_{T}$ is the the submatrix of $\Sigmab_{u,\ts}^{-1}$ with rows and columns in $T$.
\end{thm}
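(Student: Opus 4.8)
The plan is to establish the $\chi^2$-approximation for $T_w$ in three stages: (i) reduce $T_w$ to a quadratic form in an asymptotically linear statistic via Theorem \ref{thm:2}(d); (ii) show that the appropriate normalization of this linear statistic converges to a standard Gaussian uniformly in the relevant parameters; and (iii) replace the population variance $\Omegab_T/\sigmae^2$ with its plug-in estimate $\Omegabh_T/\sigmaeh^2$ at a rate fast enough not to disturb the limit. Throughout I will condition on the event (of probability tending to $1$) from Theorem \ref{thm:2} on which $\Sha=S_a$ and the sign consistency holds, so that the random index set $\ts = T\cup S_a$ is effectively a fixed low-dimensional set of size $t+s_a=o(n^{1/3})$.

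\textbf{Step 1 (asymptotic linearization).} By Theorem \ref{thm:2}(d), with $\K_n = n^{-1}\Uts'\Uts$,
\begin{equation*}
  \sqrt{n}\,(\betahats - \betasts) = n^{-1/2}\K_n^{-1}\Uts'\epsilonb + o_P(1),
\end{equation*}
and restricting to the $T$-coordinates and left-multiplying by $\A$ gives $\sqrt{n}(\A\betahat - \A\betasts_T) = \A\big(\K_n^{-1}\big)_{T,\cdot}\, n^{-1/2}\Uts'\epsilonb + o_P(1)$. Hence $\sqrt{n}(\A\betahat-\b) = \sqrt{n}\,\h_n + \A(\K_n^{-1})_{T,\cdot}\,n^{-1/2}\Uts'\epsilonb + o_P(1)$, where $\h_n = \A\betasts_T-\b$ is controlled by Condition \ref{con:10}. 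I would then argue that $\K_n^{-1} \to \Sigmab_{u,\ts}^{-1}$ in the relevant operator sense — using Conditions \ref{con:1}, \ref{con:3}, \ref{con:6} (sub-Gaussian concentration for $n^{-1}\Uts'\Uts$ on a set of size $o(n^{1/3})$, plus the fact that $\Uts$ is the \emph{estimated} idiosyncratic component, so one must absorb the factor-estimation error, which is where $p_{\min}\gg n^{3/2}$ enters to make $\|\Uh - \U\|$ negligible). Writing $\boldsymbol{\xi}_n := \A\,\Sigmab_{u,\ts}^{-1}\big|_{T\text{-rows}}\, n^{-1/2}\Uts'\epsilonb$, its covariance is $\sigmae^2\, \A\Omegab_T\A'$ with $\Omegab_T$ the $T$-block of $\Sigmab_{u,\ts}^{-1}$, exactly matching the population normalizer in the statement.

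\textbf{Step 2 (CLT for the quadratic form).} I would apply a Lyapunov-type central limit theorem to the $r$-dimensional sum $n^{-1/2}\sum_{i=1}^n \A\Sigmab_{u,\ts}^{-1}|_T\, \u_{i,\ts}\,\epsilon_i$; the third-moment Lyapunov ratio is exactly what Condition \ref{con:11} controls (the factor $r^{1/4}n^{-1/2}$ together with $\E|\u'_{\ts}\Sigmab_{u,\ts}^{-1}\u_{\ts}|^{3/2}$ bounding the sum of third absolute moments when $\epsilon_i$ is sub-Gaussian by Condition \ref{con:1}), while $\lambda_{\max}\{(\A\A')^{-1}\}\le C$ keeps the covariance non-degenerate. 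This yields $(\sigmae^2\A\Omegab_T\A')^{-1/2}\boldsymbol{\xi}_n \xrightarrow{D} N(\zero,\I_r)$, so $(\boldsymbol{\xi}_n + \sqrt{n}\,\h_n)' (\sigmae^2\A\Omegab_T\A')^{-1}(\boldsymbol{\xi}_n+\sqrt{n}\,\h_n)$ converges to a noncentral $\chi^2(r,\nu_n)$ with $\nu_n = n\h_n'(\A\Omegab_T\A')^{-1}\h_n/\sigmae^2$, and $\sqrt{n}\,\h_n = O(\sqrt{r})$ by Condition \ref{con:10} keeps $\nu_n$ bounded. The uniformity over $x$ is obtained from the (locally uniform in $\nu_n$) continuity of the noncentral $\chi^2$ c.d.f.\ combined with a Pólya-type argument.

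\textbf{Step 3 (plug-in variance) and the main obstacle.} It remains to replace $(\sigmae^2\A\Omegab_T\A')$ by $(\sigmaeh^2\A\Omegabh_T\A')$. Consistency of $\sigmaeh^2$ is Condition \ref{con:5}/the remark in Section \ref{sec:estimation}; for $\Omegabh_T$, since on the good event $\Sha=S_a$, $\Omegabh_{T\cup\Sha} = (n^{-1}\Uhts'\Uhts)^{-1} = \K_n^{-1} + o_P(1)$, and its $T$-block converges to $\Omegab_T$, so Slutsky applies. \textbf{The hard part will be} Step 1's linearization bookkeeping: carrying the factor-estimation error from $\Uh$ versus $\U$ through both $\K_n^{-1}$ and $\Uts'\epsilonb$ while keeping everything $o_P(n^{-1/2})$ after scaling, which is precisely why the stringent $p_{\min}\gg n^{3/2}$ and $t+s_a = o(n^{1/3})$ requirements appear; the $n^{1/3}$ sparsity budget is dictated by needing the $\ell_2$ error $\sqrt{t+s_a}\,\delta_n$ from Theorem \ref{thm:2}(c), when squared and multiplied by $\sqrt{n}$, to remain $o(1)$ against the cube-root Lyapunov scaling in Condition \ref{con:11}. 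Verifying that the $o_P(1)$ term in Theorem \ref{thm:2}(d) survives left-multiplication by $\A$ (bounded via $\lambda_{\max}\{(\A\A')^{-1}\}\le C$ and a bound on $\|\A\|$) and the subsequent quadratic form is the remaining delicate point.
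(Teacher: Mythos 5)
Your proposal follows essentially the same route as the paper: linearize $\sqrt{n}(\A\betahat-\b)$ via Theorem \ref{thm:2}(d), add the drift $\sqrt{n}\h_n$ controlled by Condition \ref{con:10}, prove a multivariate CLT for the iid sum $n^{-1/2}\sum_i \A\Omegab_{\ts}|_T\,\u_{i,\ts}\epsilon_i$ using the third-moment bound in Condition \ref{con:11}, and then swap in $\sigmaeh^2$ and $\Omegabh_T$ (the paper does this via the perturbation rates of its Lemma 3, borrowed from Shi et al., e.g. $\ltwonorm{\Psib_n^{1/2}(\A\Omegabh_T\A')^{-1}\Psib_n^{1/2}-\I}=O_P((t+s_a)/\sqrt{n})$).

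Two points where your sketch is weaker than the paper's argument. First, the uniformity mechanism: "CLT plus a P\'olya-type argument" does not suffice here, because the approximating law $\chi^2(r,\nu_n)$ drifts with $n$ (and $r$ may grow), so there is no fixed continuous limit to which P\'olya's theorem applies. The paper instead uses Bentkus' Berry--Esseen bound over \emph{convex sets} (its Lemma 4, which is exactly why the $r^{1/4}$ factor appears in Condition \ref{con:11}), applied to the balls $\mathcal{C}_x=\{\z: \ltwonorm{\z+\sqrt{n/\sigmae^2}\,\Psib^{-1/2}\h_n}^2\leq x\}$, which delivers the uniform-in-$x$ approximation directly; it then absorbs the plug-in and linearization errors, which must be shown to be $o_P(r)$ (not merely $o_P(1)$), through a uniform-continuity lemma for the noncentral $\chi^2$ family at scale $r\varepsilon$. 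Your Step 2 gestures at the right condition, but as written the passage from the CLT to the stated uniform statement is a gap. Second, your diagnosis of where $t+s_a=o(n^{1/3})$ comes from is off: it is not the squared $\ell_2$ rate from Theorem \ref{thm:2}(c); rather, it is what makes Condition \ref{con:11} plausible (since $\E|\u_{\ts}'\Sigmab_{u,\ts}^{-1}\u_{\ts}|^{3/2}\asymp (t+s_a)^{3/2}$, the Lyapunov ratio requires roughly $(t+s_a)^{3/2}/\sqrt{n}\to 0$), with the cruder bound $O_P\left(r(t+s_a)/\sqrt{n}\right)=o_P\left(r \right)$ for the variance plug-in needing only $t+s_a=o(n^{1/2})$. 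These are fixable, and otherwise your decomposition and use of $p_{\min}\gg n^{3/2}$ (to kill the factor-estimation remainder in the expansion) match the paper.
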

\noindent
By Theorem \ref{thm:3}, we reject $H_0:\A\betabs_T=\b$ if $T_w>\chi^2_{\alpha}(r,0)$, where $\chi^2_{\alpha}(r,0)$ is the $\alpha$-upper quantile of the $\chi^2$-distribution with $r$ degrees of freedom. The limiting distribution we establish in Theorem \ref{thm:3} is the same as the classical Wald test result for a low-dimensional linear regression model \citep{shi2019linear}.

We also remark that, the requirement of $p_{\min}\gg n^{3/2}$ in Theorem \ref{thm:3} ensures that the latent factors in each modality can be well estimated. Therefore, the extra factor estimation error would not affect the limiting distribution of $T_w$. This condition is more stringent than that of $p_{\min}\gg n^{1/2}$, which guarantees the minimax optimal rate of estimation in Theorem \ref{thm:2}. This is because hypothesis testing is a more challenging task than estimation. 

Finally, write $\ltwonorm{(\A\Omegab_T\A')^{-1/2}\h_n}=Cn^{-\phi_{\nu}}$ for some constant $C>0$, and let $\mathcal{\tilde{N}} = \big\{\betabs : \ltwonorm{\A\betabs_T-\b}=O(\sqrt{r/n}), t+s_a=o(n^{1/3}) \big\}$. Theorem \ref{thm:3} implies the following corollary regarding the local power of the proposed test. Its proof is similar to that for Corollary \ref{cor:1} and is omitted.

\begin{cor}
\label{cor:2}
Suppose the conditions of Theorem \ref{thm:3} hold. Then, 
\begin{enumerate}[(a)]
    \item $\lim_{n\to \infty} \sup_{\betabs\in \tilde{\N}} \sup_{x>0} |\pr(T_w\leq x) - \pr(\chi^2(r,0)\leq x)| \to 0$, if $\phi_{\nu}>1/2$; 
    \item $\lim_{n\to \infty} \sup_{\betabs\in \tilde{\N}} \sup_{x>0} |\pr(T_w\leq x) - \pr(\chi^2(r,\nu)\leq x)| \to 0$, if $\phi_{\nu}=1/2$;
    \item $\liminf_{n\to \infty} \sup_{\betabs\in \tilde{\N}} \pr(T_w> x) =1$, if $\phi_{\nu}<1/2$; 
\end{enumerate}
where $\nu=\lim_{n\to\infty}\nu_n$ in (b), and (c) holds for any $x>0$.
\end{cor}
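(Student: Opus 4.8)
The plan is to derive the trichotomy directly from the uniform $\chi^2$-approximation of Theorem~\ref{thm:3}, combined only with elementary continuity and tail estimates for the noncentral $\chi^2$ distribution as a function of its noncentrality parameter. The first step is to translate the scaling $\ltwonorm{(\A\Omegab_T\A')^{-1/2}\h_n}=Cn^{-\phi_{\nu}}$ into the asymptotics of $\nu_n$. Since $\nu_n=n\h_n'(\A\Omegab_T\A')^{-1}\h_n/\sigmae^2=n\,\ltwonorm{(\A\Omegab_T\A')^{-1/2}\h_n}^2/\sigmae^2=(C^2/\sigmae^2)\,n^{1-2\phi_{\nu}}$, we read off $\nu_n\to 0$ when $\phi_{\nu}>1/2$, $\nu_n\to\nu:=C^2/\sigmae^2$ when $\phi_{\nu}=1/2$, and $\nu_n\to\infty$ when $\phi_{\nu}<1/2$. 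The constraint $\ltwonorm{\h_n}=O(\sqrt{r/n})$ together with $\lambda_{\max}\{(\A\A')^{-1}\}\le C$ in Condition~\ref{con:10} and the bounded spectrum of $\Omegab_T$ (Condition~\ref{con:6}) bounds $\nu_n$ uniformly over $\tilde{\N}$, so these limits are attained uniformly over the subclass of $\tilde{\N}$ carrying the prescribed $\phi_{\nu}$.

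For parts (a) and (b) I would exploit the Poisson-mixture representation of the noncentral $\chi^2$: writing $J_n\sim\mathrm{Poisson}(\nu_n/2)$ and letting $G_k$ denote the central $\chi^2_k$ distribution function, one has $\pr\{\chi^2(r,\nu_n)\le x\}=\E[G_{r+2J_n}(x)]$. Because the $j=0$ summand cancels $G_r(x)$ and every other summand is bounded by $1$, this yields the $x$-free bound $\sup_{x>0}|\pr\{\chi^2(r,\nu_n)\le x\}-\pr\{\chi^2(r,0)\le x\}|\le 1-e^{-\nu_n/2}$, and more generally $\sup_{x>0}|\pr\{\chi^2(r,\nu_n)\le x\}-\pr\{\chi^2(r,\nu)\le x\}|$ is dominated by the total-variation distance between $\mathrm{Poisson}(\nu_n/2)$ and $\mathrm{Poisson}(\nu/2)$, which vanishes as $\nu_n\to\nu$. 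Inserting these into a triangle inequality against the uniform bound $\sup_x|\pr(T_w\le x)-\pr\{\chi^2(r,\nu_n)\le x\}|\to0$ of Theorem~\ref{thm:3}, and noting that both noncentral bounds depend on $\betabs$ only through $\nu_n$, gives (a) with $\nu_n\to0$ and (b) with $\nu_n\to\nu$.

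For part (c) I would use the first two moments of the noncentral $\chi^2(r,\nu_n)$, namely mean $r+\nu_n$ and variance $2(r+2\nu_n)$. For any fixed $x>0$, once $r+\nu_n\ge 2x$, Chebyshev's inequality gives $\pr\{\chi^2(r,\nu_n)\le x\}\le 2(r+2\nu_n)/(r+\nu_n-x)^2\le 16/(r+\nu_n)\to0$ as $\nu_n\to\infty$, regardless of whether $r$ stays fixed or diverges. Since Theorem~\ref{thm:3} gives $\pr(T_w\le x)=\pr\{\chi^2(r,\nu_n)\le x\}+o(1)$ uniformly in $x$, it follows that $\pr(T_w>x)\to1$ along any sequence in $\tilde{\N}$ with $\phi_{\nu}<1/2$; as the power is always bounded by $1$, taking the supremum over $\tilde{\N}$ and then the $\liminf$ produces the value $1$.

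I do not expect a genuine obstacle here, only careful bookkeeping of the uniformity over $\tilde{\N}$. The two points to verify are that the $o(1)$ remainder of Theorem~\ref{thm:3} is uniform over the neighborhood, which holds because $\tilde{\N}$ is precisely the set on which the hypotheses of Theorem~\ref{thm:3} are assumed, and that the continuity and tail bounds above involve the parameter only through $\nu_n$, so that the supremum over the relevant subclass is controlled by the extremal value of $\nu_n$ alone. All remaining manipulations are identical in spirit to those already carried out for Corollary~\ref{cor:1}, which is why the argument can be stated compactly.
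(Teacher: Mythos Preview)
Your proposal is correct and follows essentially the same route as the paper: the paper omits the proof of Corollary~\ref{cor:2} entirely, pointing to the proof of Corollary~\ref{cor:1}, which combines the uniform $\chi^2$-approximation of the theorem with the Poisson-mixture representation of the noncentral $\chi^2$ cdf to control $\sup_x|\pr\{\chi^2(r,\nu_n)\le x\}-\pr\{\chi^2(r,\nu)\le x\}|$ --- exactly your strategy for (a) and (b). Your Chebyshev argument for (c) is a clean and valid way to fill in the case the paper leaves implicit, and your bookkeeping of the uniformity over $\tilde{\N}$ through the dependence on $\nu_n$ alone is accurate.
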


\section{Quantification of contribution of a single modality}
\label{sec:var-measure}

In addition to testing the significance of a whole data modality, it is of equal interest to quantify the amount of contribution of a modality conditioning on other data modalities in the regression model. As an example, in heritability analysis, the goal is to evaluate the contribution of genetic effects to the phenotype in addition to the environmental effects \citep{lynch1998genetics}. Motivated by the proportion of the response variance explained in the classical linear regression, we propose a measure of the contribution of a single data modality in our integrative factor regression model.

Let $\x_{-m} \in \Rcal^{p-p_m}$ denote the subvector of $\x \in \Rcal^{p}$ excluding $\x_m \in \Rcal^{p_m}$, and $\X_{-m} \in \Rcal^{n \times (p-p_m)}$ denote the submatrix of $\X \in \Rcal^{n \times p}$ excluding $\X_m \in \Rcal^{n \times p_m}$. To evaluate the contribution of $\x_m$, our key idea is to compare the goodness-of-fit of regressing $y$ on $\x$ to that of regressing $y$ on $\x_{-m}$. Toward that end, under model \eqref{eq:4}, we define
\begin{eqnarray*}
\sigma_{m|-m}^2 = \var(\x_m'\betabs_m|\x_{-m}).
\end{eqnarray*}

Next, we present a proposition regarding $\sigma_{m|-m}^2$, where statements (a) and (b) show in two different ways that $\sigma_{m|-m}^2$ can be interpreted as the improvement of the goodness-of-fit, or equivalently, additional variance of the response explained, given by the $m$th modality in addition to all other modalities. This justifies why $\sigma_{m|-m}^2$ can be used to quantify the contribution of a single modality. To simplify the presentation, we only consider the case where $p < n$. We then discuss that such an interpretation of $\sigma_{m|-m}^2$ holds for $p > n$ as well. Next, statement (c) shows that, if $\x_m$ and $\x_{-m}$ share some common factors, in that $\x_m=\Lambdab_m\f+\u_m$, and $\x_{-m}=\Lambdab_{-m}\f+\u_{-m}$, where $\f\in \Rcal^K$, we then have a closed-form expression for $\sigma_{m|-m}^2$. This expression holds true regardless of $p<n$ or $p>n$, and thus provides a unified way of computing $\sigma_{m|-m}^2$ in practice. 

\begin{pro} \label{prop:1} 
Suppose $\x$ follows a multivariate normal distribution and $p < n$. Let $\hat{\Y}$ and $\hat{\Y}_{-m}$ denote the predicted response by regressing $y$ on $\x$, and regressing $y$ on $\x_{-m}$, respectively, via least squares. Let $\sigma_y^2 = \var(y)$. Then the following results hold:
\begin{enumerate}[(a)]
  \item $\sigma_{m|-m}^2 = \E\ltwonorm{\Y-\hat{\Y}_{-m}}^2 / (n-p_{-m}) - \sigmae^2$;

  \item $\sigma_{m|-m}^2 = \sigma_y^2 (r^2 - r^2_{-m})$, where $r^2 = 1 - \E \ltwonorm{\Y-\hat{\Y}}^2 / \{ (n - p) \sigma_y^2 \}$, and $r^2_{-m} = 1 - \E \ltwonorm{\Y-\hat{\Y}_{-m}}^2 / \{ (n - p_{-m}) \sigma_y^2 \}$;  
    
  \item $\sigma_{m|-m}^2 = {\betabs_m}' \left\{ \Lambdab_m(\I_{K}+\Lambdab_{-m}'\Sigmaunom^{-1}\Lambdab_{-m})^{-1}\Lambdab_m'+\Sigmab_{u_m} \right\} \betabs_m$. 
\end{enumerate}
\end{pro}

By Proposition \ref{prop:1}(a), when regressing $y$ using all but the $m$th modality, we have $\E\ltwonorm{\Y-\hat{\Y}_{-m}}^2 = (n-p_{-m}) (\sigmae^2+\sigma^2_{m|-m})$. On the other hand, when regressing $y$ on all data modalities, we have $\E \ltwonorm{\Y-\hat{\Y}}^2=(n-p)\sigma_{\epsilon}^2$. Therefore, from a goodness-of-fit perspective, ignoring $\x_m$ leads to a ``worsened" prediction by an amount of $\sigma_{m|-m}^2$. 

For Proposition \ref{prop:1}(b), recall in the classical linear regression model, the adjusted $R^2$ measures the percentage of total variation in the response that has been explained by the predictors, and is defined as $R^2 = 1- \{RSS/(n-p)\} / \{TSS/(n-1)\}$, 
where $RSS$ and $TSS$ are the residual sum of squares and total sum of squares, respectively. Then, $r^2$ in Proposition \ref{prop:1}(b) can be viewed as an ``expected'' percentage of total variation in the response explained, in that, 
\begin{equation*}
r^2 = 1 - \frac{\E(RSS)/(n-p)}{\E(TSS)/(n-1)} = 1 - \frac{\E \ltwonorm{\Y-\hat{\Y}}^2}{(n-p) \sigma_y^2}.
\end{equation*}
As we show in the proof of Proposition \ref{prop:1}, when using all but the $m$th modality, the ``expected" percentage of total variation in the response explained is $r^2_{-m} = 1-(\sigma_{\epsilon}^2+\sigma_{m|-m}^2)/\sigma_y^2$, where $\sigma_{\epsilon}^2 = \E \ltwonorm{\Y-\hat{\Y}}^2 / (n-p)$. On the other hand, when using all data modalities, the ``expected" percentage of total variation in the response explained is $r^2 = 1-\sigma_{\epsilon}^2/\sigma_y^2$. Therefore, using the $m$th modality improves the ``expected'' percentage of total variation in the response explained by an amount of $\sigma_{m|-m}^2/\sigma_y^2$. 

We have so far justified $\sigma^2_{m|-m}$ in the setting where $p < n$. In the setting where $p > n$ and the true model is sparse, in that there are only $s$ variables associate with $y$ with $s<n$, we can still use $\sigma_{m|-m}^2$ to quantify the contribution of an individual modality. This is because Proposition \ref{prop:1}(a) and (b) continue to hold if we replace $\hat{\Y}$ and $\hat{\Y}_{-m}$ with $\tilde{\Y}$ and $\tilde{\Y}_{-m}$, and replace $(n-p)$ with $(n - s)$, where $\tilde{\Y}$ denotes the predicted response by regressing $y$ on the $s$ true variables via least squares, and $\tilde{\Y}_{-m}$ is defined similarly but excluding the variables in the $m$th modality. In practice, of course, which subset are the true variables is unknown. However, Proposition \ref{prop:1}(a) and (b) only provide conceptual justifications of $\sigma_{m|-m}^2$. We always resort to Proposition \ref{prop:1}(c) to compute $\sigma_{m|-m}^2$, which holds regardless of $p < n$ or $p>n$. Besides, it does not require the knowledge of the true variables, nor any extra variable selection step to identify them.

Next, we present a plug-in estimator of $\sigma_{m|-m}^2$ given the data. We first apply \citet{Bai2002} in (\ref{eq:101}) to the concatenated data matrix $\X=(\X_1,\ldots,\X_M)\in \Rcal^{n\times p}$ to estimate the number of shared factors. We then apply PCA to obtain $\Fh$. We next estimate $\Lambdab_m$ by $\Lambdabh_m=(1/n)\X_m'\Fh$, and obtain $\Uh_m=\X_m-\Fh\Lambdabh_m'$. We solve for $\betabh$ following  (\ref{eq:7}). We then apply the thresholding method of \cite{Fan2013b} to estimate $\Sigmab_u$ by $\Sigmabh_u$, whose $(i,j)$th element is $\sigmah^2_{u,ij}=s(n^{-1}\sum_{\ell=1}^n \hat{U}_{\ell i}\hat{U}_{\ell j}, \omega)$, $s(x,\omega)$ is a thresholding function, $\omega$ is the threshold, and $\hat{\U} = (\Uh_1,\ldots,\Uh_M)$. Let $\betabh_m$ denote the subvector of $\betabh$ with indices in the $m$th modality. By Theorem \ref{thm:2}, $\betabh_m$ is a consistent estimator of $\betabs_m$. By Theorem 3.3 of \cite{Fan2013b}, $\Lambdabh_m$ and $\Lambdabh_{-m}$ are two consistent estimators of the loading matrices. In addition, by Theorem 3.1 of \cite{Fan2013b}, $\Sigmabh_u$ is a consistent estimator of $\Sigmab_u$. Plugging all these estimators into Proposition \ref{prop:1}(c) gives a consistent estimator of $\sigma_{m|-m}^2$.

We make two additional remarks about $\sigma_{m|-m}^2$. First, the closed-form expression of $\sigma_{m|-m}^2$ utilizes the factors commonly shared by $\x_m$ and $\x_{-m}$. Indeed, such factors determine the correlations between $\x_m$ and $\x_{-m}$. When no such common factors exist, $\x_m$ and $\x_{-m}$ are uncorrelated. In that case, $\var(\x_m'\betabs_m|\x_{-m}) = \var(\x_m'\betabs_m)={\betabs_m}'\Sigmab_{x_m}\betabs_m={\betabs_m}'\Sigmab_{u_m}\betabs_m$. Therefore, the closed-form expression in Proposition \ref{prop:1}(c) can be viewed as a more general form of this special case by taking the correlations between $\x_m$ and $\x_{-m}$ into account. Second, the computation of $\sigma_{m|-m}^2$ only requires to invert a sparse high-dimensional matrix $\Sigmab_{u_{-m}}$ and a low-dimensional matrix $\I_K+\Lambdab_{-m}'\Sigmaunom^{-1}\Lambdab_{-m}$. If an exact factor model is further adopted such that $\Sigmab_u$ becomes a diagonal matrix, $\sigma_{m|-m}^2$ can be easily computed, as one only needs to invert a low-dimensional matrix. On the contrary, if one does not employ a factor model, then $\var(\x_m'\betabs_m|\x_{-m})={\betabs_m}'(\Sigmab_{x_m}-\Sigmab_{x_m,x_{-m}}\Sigmaxnom^{-1}\Sigmab_{x_{-m},x_m})\betabs_m$, where $\Sigmab_{x_m}=\E(\x_m^{\otimes 2})$, $\Sigmab_{x_m,x_{-m}}=\E(\x_m\x'_{-m})$, $\Sigmab_{x_{-m}}=\E(\x_{-m}^{\otimes2})$, and $\Sigmab_{x_{-m},x_m}=\E(\x_{-m}\x_m')$. Consequently, a large dense matrix $\Sigmab_{x_{-m}}$ has to be inverted.

\section{Numerical analysis}
\label{sec:numerical}

\subsection{Test of a whole modality}
\label{sec:sim-test-whole}

We evaluate the empirical performance of the factor-adjusted score test of a whole modality proposed in Section \ref{sec:test-whole}. We generate $M=3$ modalities, and consider two cases of $\x$. Specifically, for each modality $m = 1, 2, 3$, $\x_m$ are $n$ i.i.d.\ random samples generated from $N_{p_m}(0, \Sigmab_m)$. For Case 1, $\Sigmab_m = \Lambdab_m\Lambdab_m'+0.5\I_{p_m}$, where each column of $\Lambdab_m \in \Rcal^{p_m\times K_m}$ is generated from $N_{p_m}(0, 2)$, and the number of factors $K_m = 2$. For Case 2, the diagonal elements of $\Sigmab_m$ equal 1 and the off-diagonal elements equal 0.4. Accordingly, in Case 1, $\x_m$ indeed follows a factor model setup, and in Case 2, although $\x_m$ does not strictly follow a factor model, its covariance matrix has spiked eigenvalues. In both cases, we aim to test if the first modality $\x_1$ is significantly associated with the response, i.e. $H_0: \betas_{11}=\ldots=\betas_{1p_1} = 0$.  We then consider two types of alternatives. The first alternative is $H_{A1}: \betas_{11} = \ldots = \betas_{1p_1} = \delta/p$, where $\delta$ is a sequence approaching zero. As such, there is a weak signal in each variable of $\x_1$ and the overall signal is dense. The second alternative is $H_{A2}: \betas_{11} = \ldots = \betas_{15} = \delta/5, \betas_{16}= \ldots = \betas_{1p_1} = 0$. As such, the overall signal is sparse, as all signals come only from the first 5 variables, whereas the rest do not associate with the response. For the other two modalities $\x_2$ and $\x_3$, we set $\betas_{21}=1, \betas_{22}=2, \betas_{23}=\ldots=\betas_{2p_2}=0$, and $\betas_{31}=-1, \betas_{32}=-1, \betas_{33}=\ldots=\betas_{3p_3}=0$. We generate the error $\epsilon$ as $n$ i.i.d.\ samples from $N(0,0.5)$, and generate $y$ based on model (\ref{eq:4}). We set $p_1 = p_2 = p_3=p/3$. We consider two combinations $(n, p) = (100, 600)$, and $(200, 900)$. We compare our test with the score test of \citet{ning2017general}, where the critical values are obtained by bootstrap.

We report the proportion of rejections of $H_0$ by both tests out of 600 data replications as we vary the value of $\delta$. When $\delta = 0$, this gives the empirical size, and when $\delta > 0$, it gives the empirical power of the two tests. Figures \ref{fig:1} and \ref{fig:2} report the results for Cases 1 and 2, respectively. In both cases, we see that our  proposed test controls the Type I error at the nominal level of $\alpha=0.05$ when $\delta = 0$. However, the test of \citet{ning2017general} often yields an inflated size. This may be due to that their multiplier bootstrap method rejects the null hypothesis if the maximum of the decorrelated score functions of variables in that modality is greater than a threshold, and as such, it is easier to reject the null hypothesis. Moreover, our test achieves an as good or often a better power than the test of \citet{ning2017general} as $\delta$ increases.

\begin{figure}[t!]
\centering
\begin{tabular}{cc}
\includegraphics[width=0.4\textwidth]{./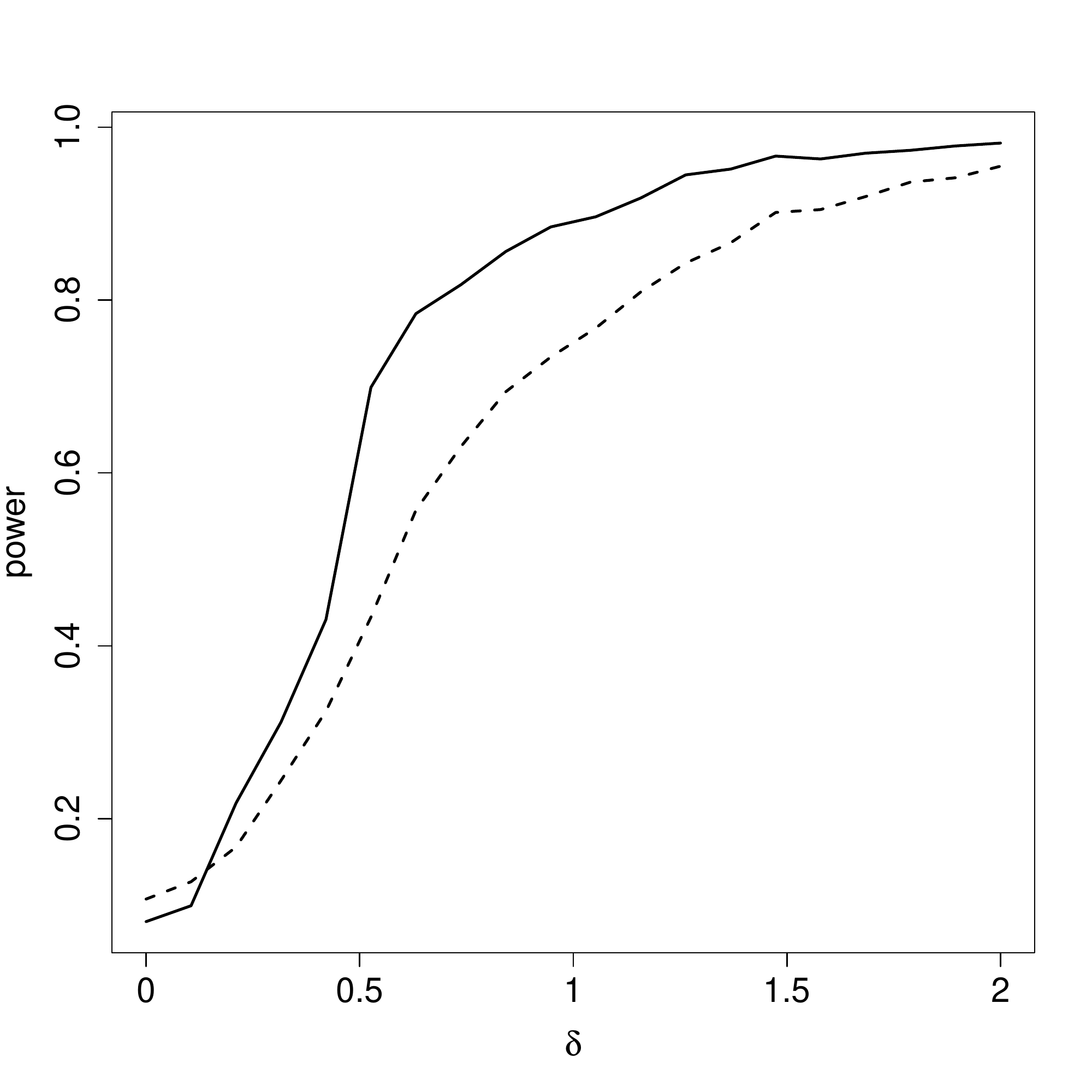} & 
\includegraphics[width=0.4\textwidth]{./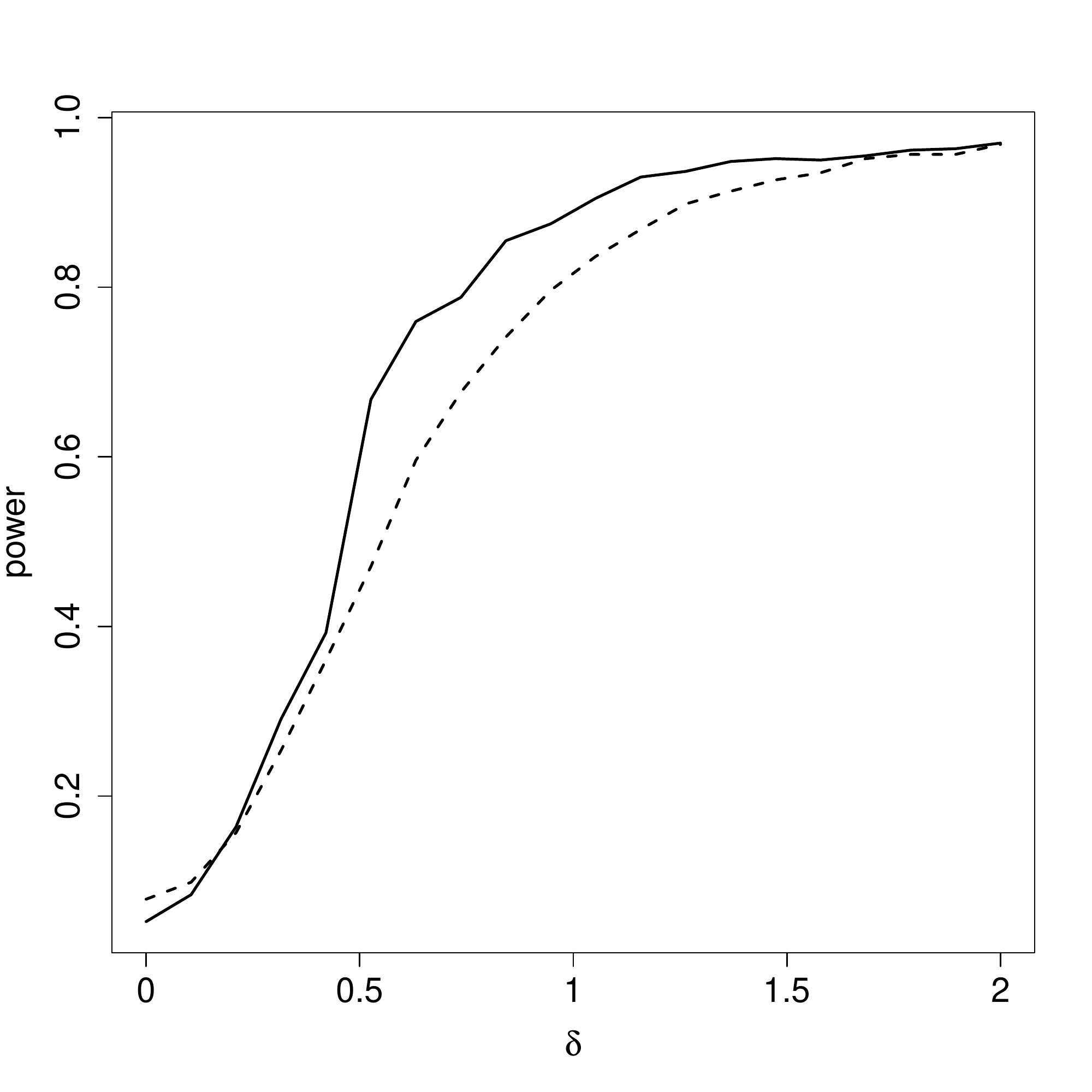} \\
$H_{A1}$: $(n,p)=(100,600)$  &  $H_{A1}$: $(n,p)=(200,900)$  \\
\includegraphics[width=0.4\textwidth]{./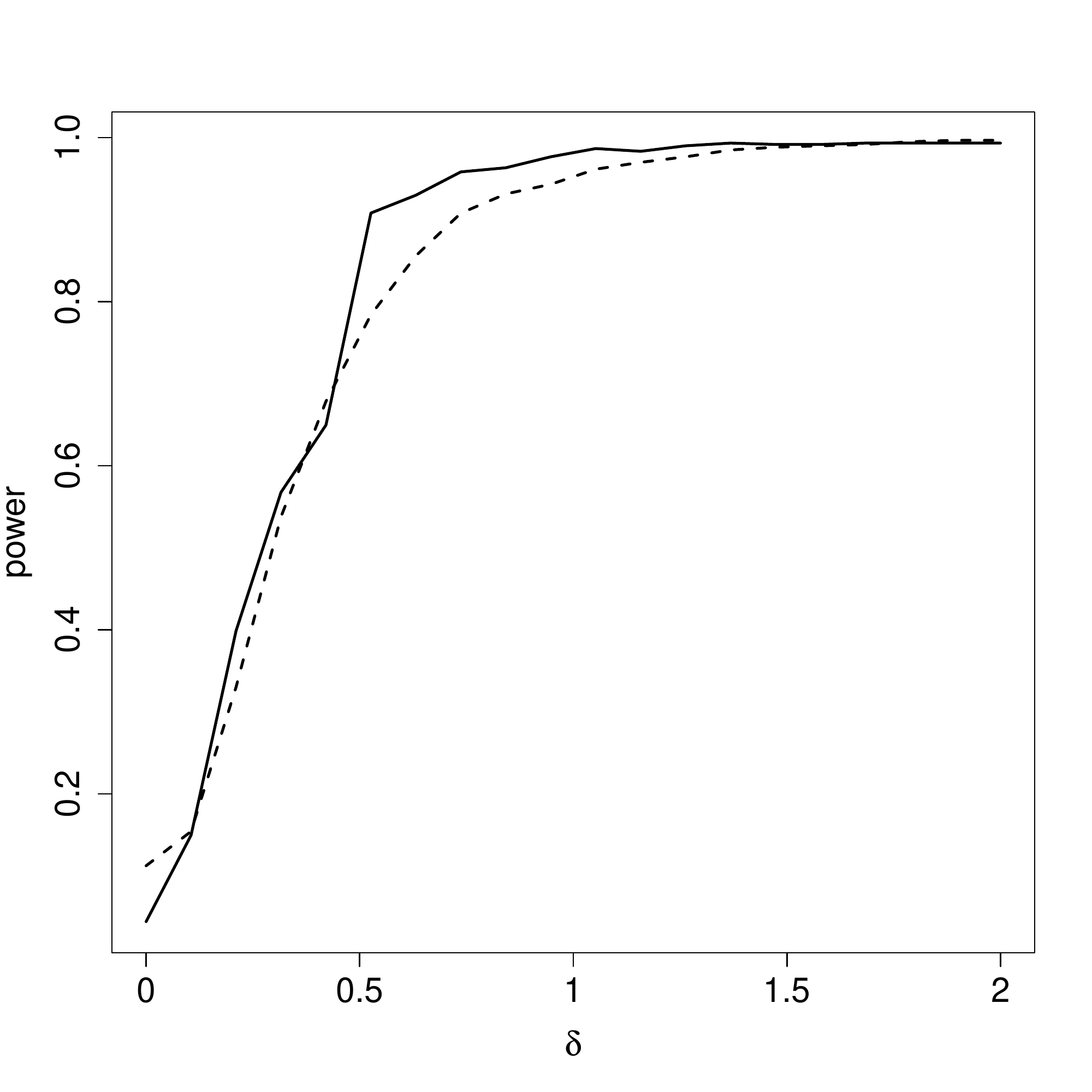} & 
\includegraphics[width=0.4\textwidth]{./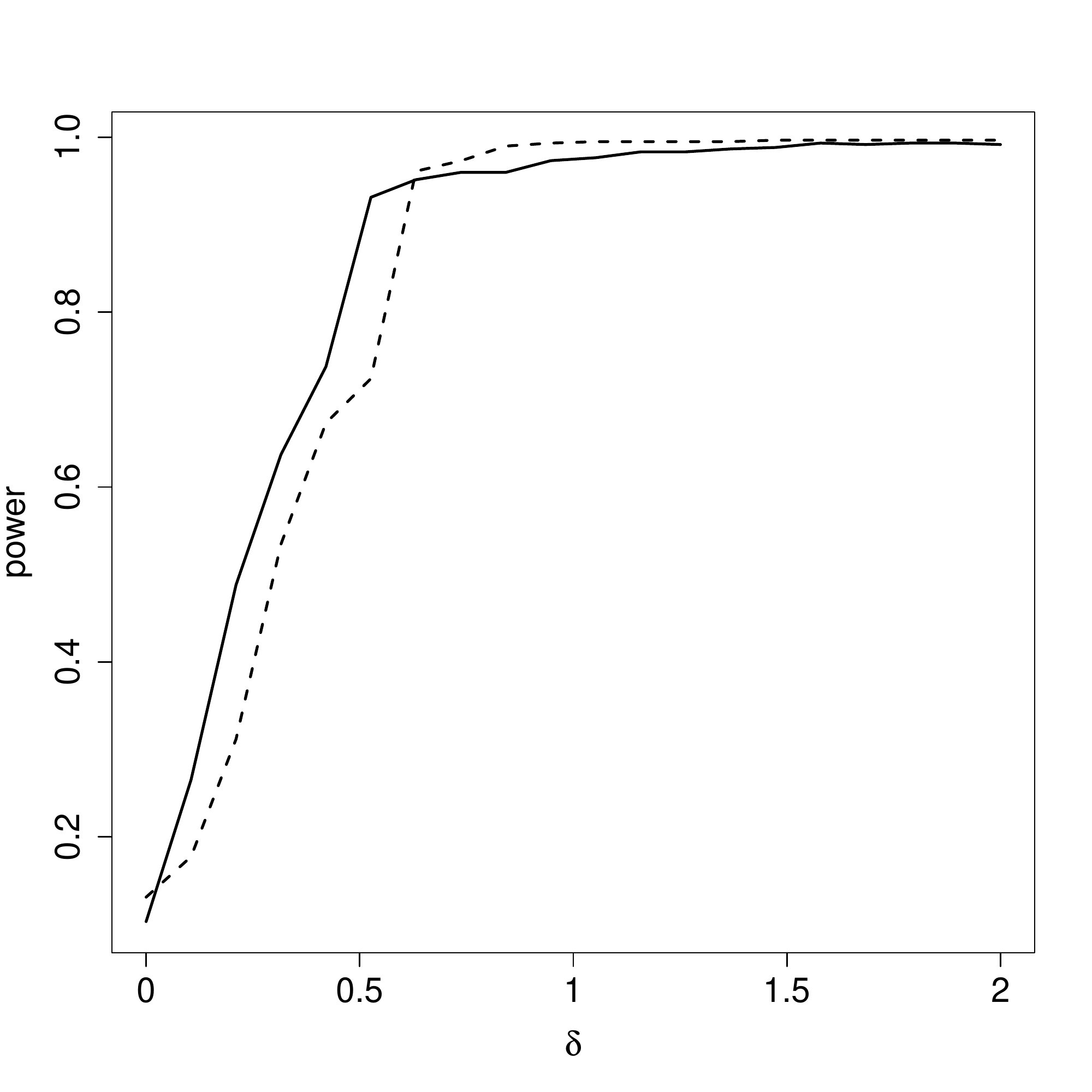} \\
$H_{A2}$: $(n,p)=(100,600)$  &  $H_{A2}$: $(n,p)=(200,900)$  \\
\end{tabular}
\caption{Empirical size and power of testing a whole modality in Case 1 for the factor-adjusted score test (solid line), and the score test of \citet{ning2017general} (dashed line).}
\label{fig:1}
\end{figure}

\begin{figure}[t!]
\centering
\begin{tabular}{cc}
\includegraphics[width=0.4\textwidth]{./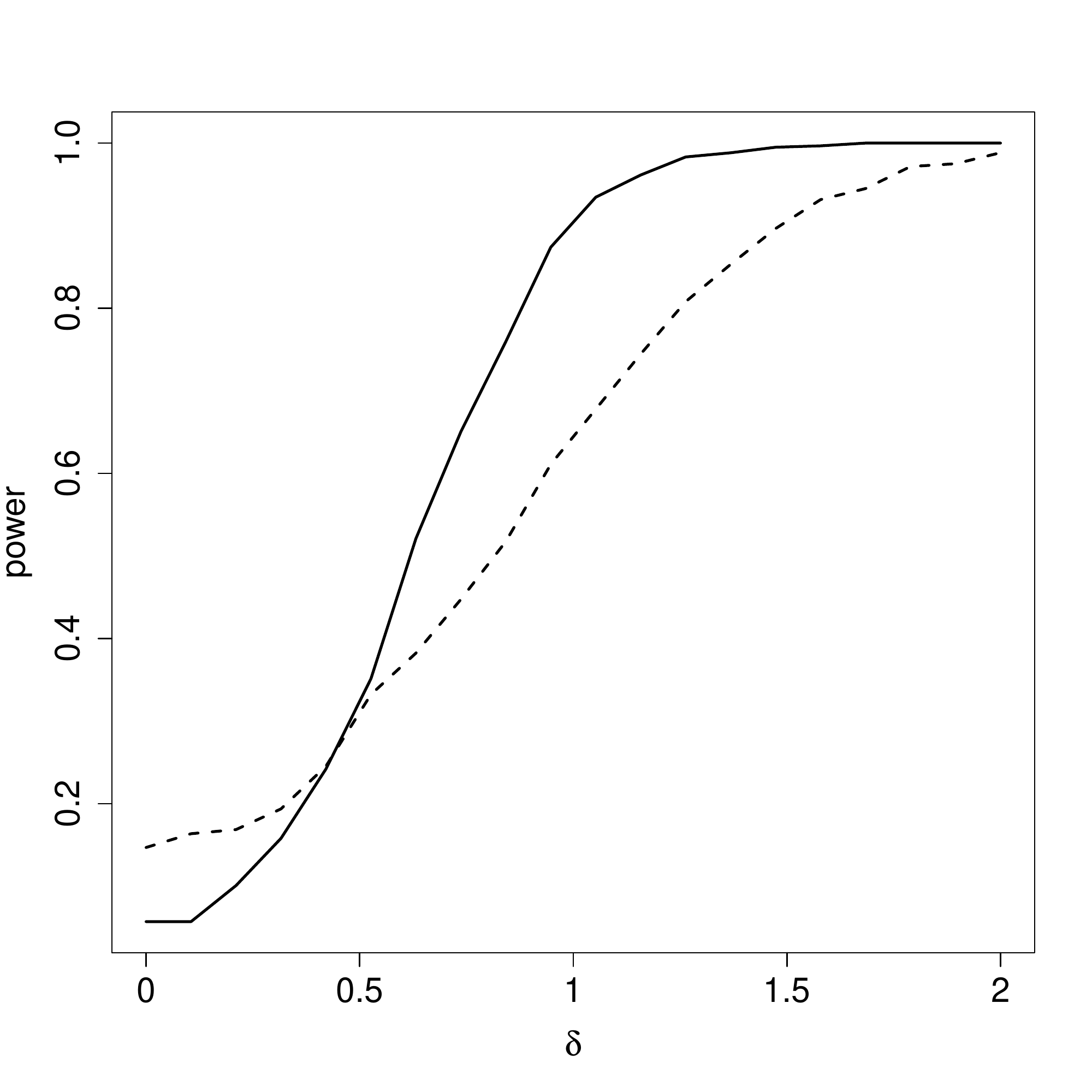} & 
\includegraphics[width=0.4\textwidth]{./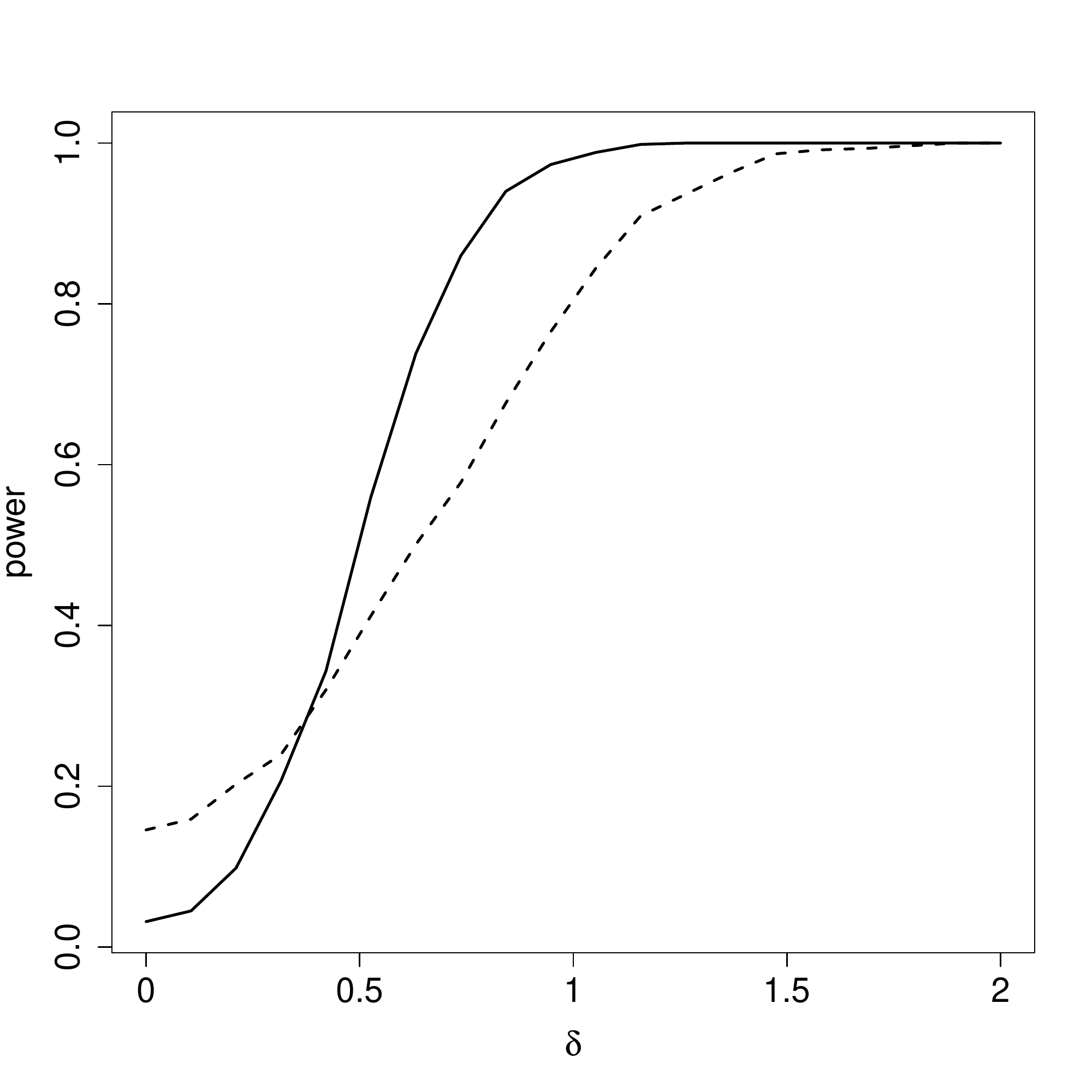} \\
$H_{A1}$: $(n,p)=(100,600)$  &  $H_{A1}$: $(n,p)=(200,900)$  \\
\includegraphics[width=0.4\textwidth]{./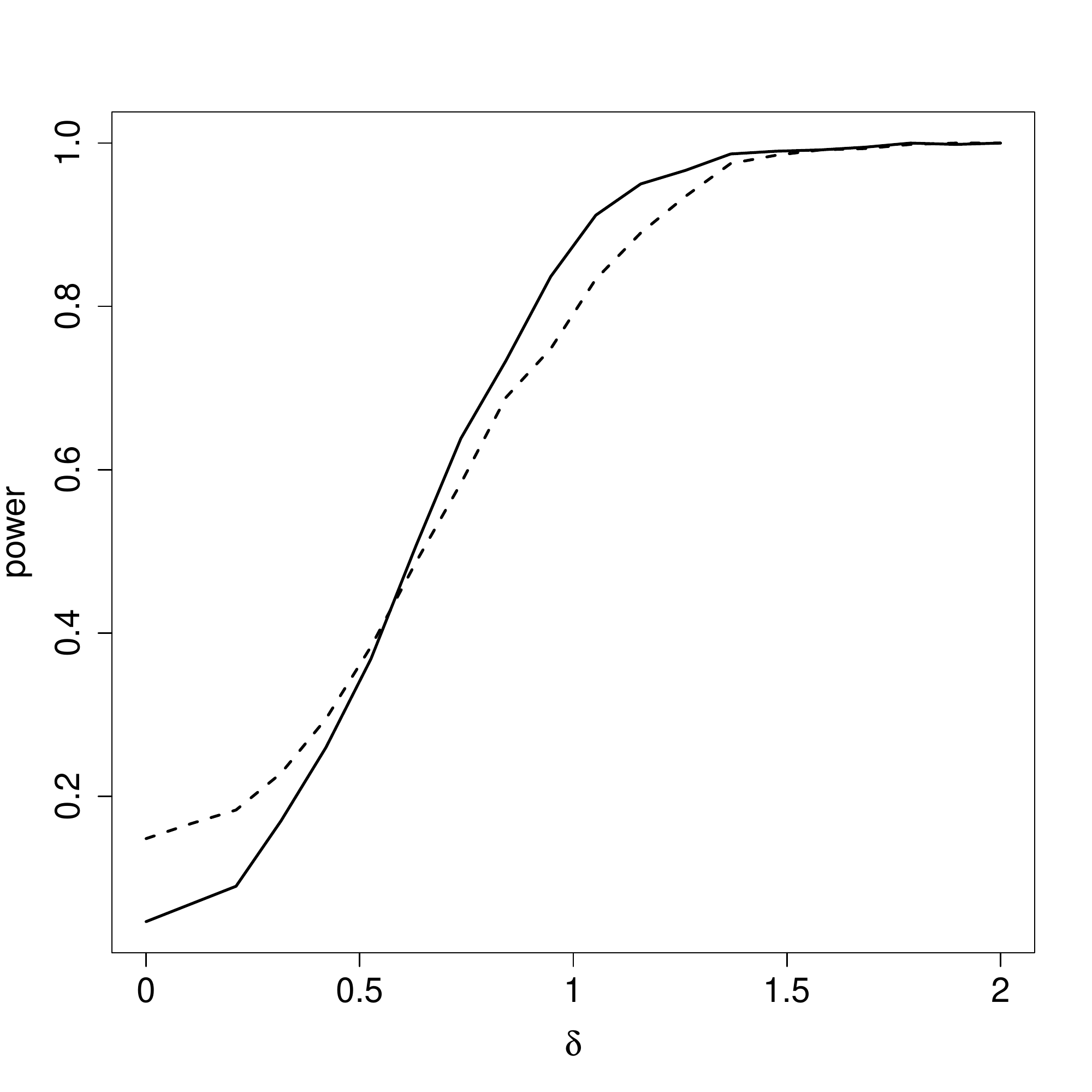} & 
\includegraphics[width=0.4\textwidth]{./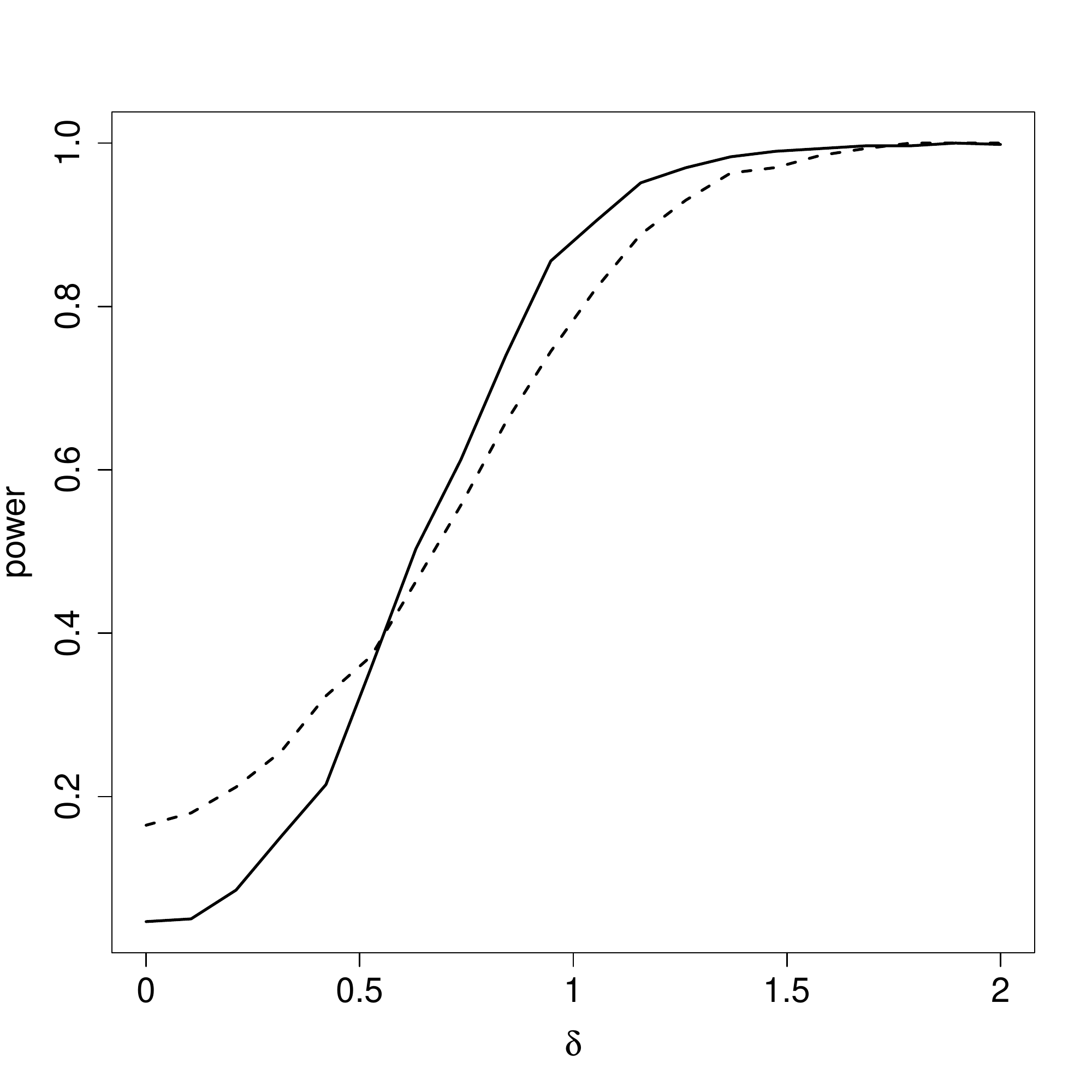} \\
$H_{A2}$: $(n,p)=(100,600)$  &  $H_{A2}$: $(n,p)=(200,900)$  \\
\end{tabular}
\caption{Empirical size and power of testing a whole modality in Case 2 for the factor-adjusted score test (solid line), and the score test of \citet{ning2017general} (dashed line).}
\label{fig:2}
\end{figure}

Next, we simulate data from the three examples as we discussed in Section \ref{sec:test-whole} to further examine the performance of the proposed test. For all three examples, we generate $M=2$ modalities with $K_m=1$ factor in each modality, and set $n=100$, $p_m=200$, and $\alpha=0.05$. We aim to test the significance of the first modality. We generate $\x_m$ as $n$ i.i.d.\ random samples from $N_{p_m}(0,\Sigmab_m)$, where $\Sigmab_m = \Lambdab_m\Lambdab_m'+0.5\I_{p_m}$ for $m=1,2$. For the second modality, we always choose $\Lambdab_2=(1,1,1,\ldots,1)'$, and set its coefficients as $\betas_{21}=1,\betas_{22}=2,\betas_{23}=\ldots=\betas_{2p_2}=0$.  For the first modality, we choose different loadings and test different local alternatives. For Example \ref{exa:1}, we choose $\Lambdab_1=(20,1,1,\ldots,1)'$, and the alternative $H_A:\betas_{11}=0.08, \betas_{12}=\ldots=\betas_{1p_1}=0$. For Example \ref{exa:2}, we choose $\Lambdab_1=(1,1,1,\ldots,1)'$, and the alternative $H_A:\betas_{11}=\ldots=\betas_{18}=0.01, \betas_{19}=\ldots=\betas_{1p_1}=0$. For Example \ref{exa:3}, we choose $\Lambdab_1=(0,1,1,\ldots,1)'$, and the alternative $H_A:\betas_{11}=0.2, \betas_{12} = \ldots=\betas_{1p_1}=0$. Table \ref{tab:2} reports the empirical size and power of the proposed factor-adjusted score test and the score test of \citet{ning2017general} based on 600 data replications. For Example \ref{exa:1}, thanks to the large loading of the first variable, the proposed test achieves a better power than the test of \citet{ning2017general}. For Example \ref{exa:2}, the nonzero coefficients are spread out in eight covariates, and they all have loadings on the latent factor. Therefore, thanks to the cumulative effects, our method again achieves a better power to detect the alternative. For Example \ref{exa:3}, while the nonzero coefficient $\betas_{11}$ is large, the corresponding loading is zero. Our test thus has no power in detecting such an alternative. These numerical results agree with our discussion in Section \ref{sec:test-whole} on the local power of the proposed test.

\begin{table}[t!]
\centering
\begin{tabular}{c|ccccc} \hline
    & \multicolumn{2}{c}{Factor-adjusted test} & \hbox{   } & \multicolumn{2}{c}{Test of Ning and Liu}\\
    & Size & Power & & Size & Power \\ \hline
    Example \ref{exa:1} & 0.06 & 0.50 & & 0.05 & 0.40 \\
    Example \ref{exa:2} & 0.06 & 0.60 & & 0.07 & 0.45\\
    Example \ref{exa:3} & 0.06 & 0.07 & & 0.05 & 0.35 \\ \hline
\end{tabular}
\caption{Empirical size and power of testing a whole modality in Examples \ref{exa:1} to \ref{exa:3} for the factor-adjusted score test, and the decorrelated score test of \citet{ning2017general}.}
\label{tab:2}
\end{table}

\subsection{Test of a linear combination of predictors}
\label{sec:sim-test-lincomb}

We next evaluate the empirical performance of the factor-adjusted Wald test of a linear combination of predictors proposed in Section \ref{sec:test-lincomb}. We again generate $M=3$ modalities and two cases of $\x$ similarly as in Section \ref{sec:sim-test-whole}, except that in the second case we increase the off-diagonal elements of $\Sigmab_m$ to 0.8 for $m=1,2,3$. We set $\betabs=({\betabs_1}',{\betabs_2}',{\betabs_3}')'$, $\betabs_1=(-2+\delta, -1,0,\ldots,0)'$, $\betabs_2=(1+\delta,2,0,\ldots,0)'$, $\betabs_3=(1+\delta,1,0,\ldots,0)'$, and aim to test the linear combination of the first variable in each modality that $H_0: \betas_{11} + \betas_{21} + \betas_{31} = 0$ versus $H_A: \betas_{11} + \betas_{21} + \betas_{31} \neq 0$. The rest of the simulation setup is the same as that in Section \ref{sec:sim-test-whole}. Since we only consider testing a single linear combination in this study, we compare our test with both the partially penalized Wald test proposed in \citet{shi2019linear}, and the test proposed in \cite{ZhuBradic2018}.

\begin{figure}[t!]
\centering
\begin{tabular}{cc}
\includegraphics[width=0.4\textwidth]{./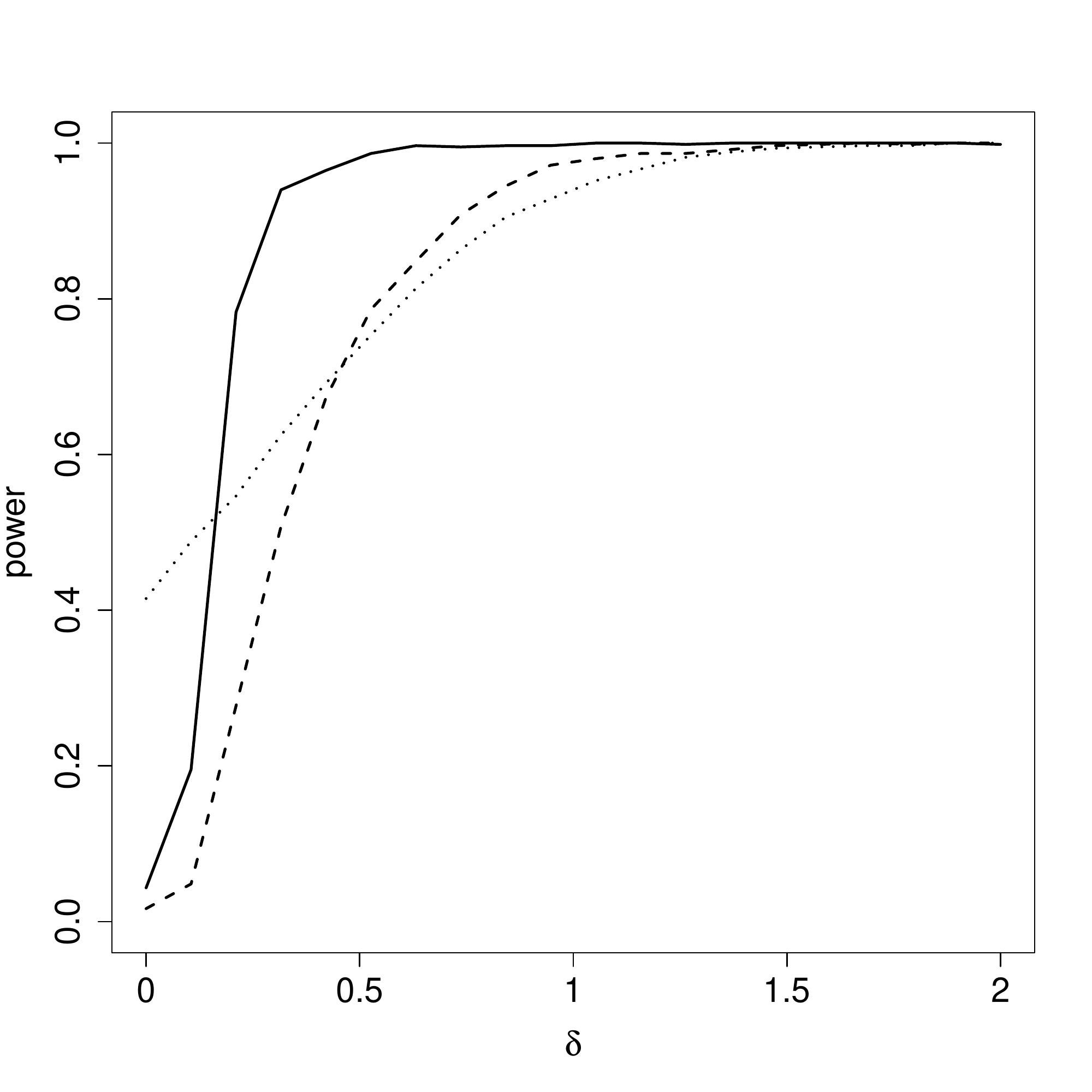} & 
\includegraphics[width=0.4\textwidth]{./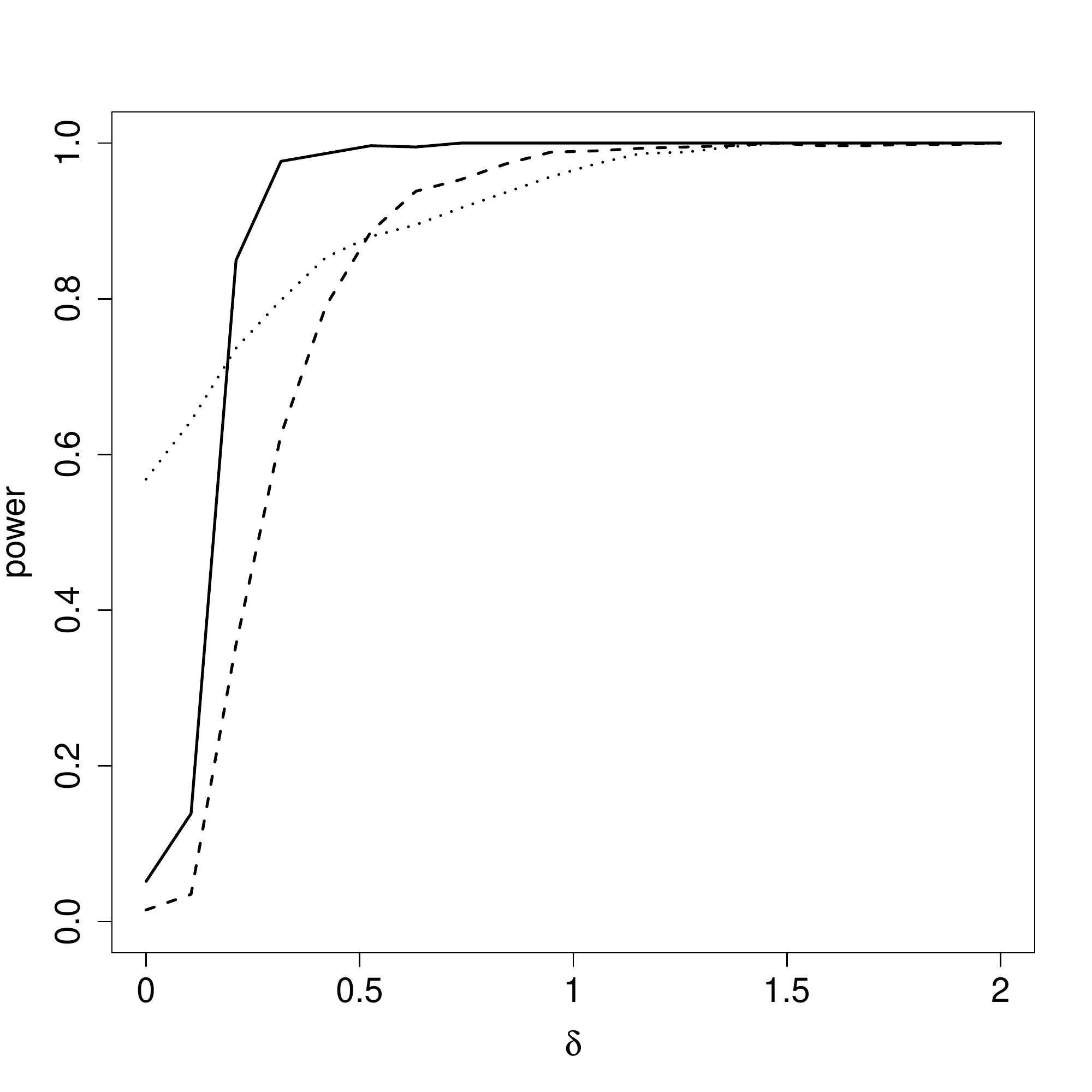} \\
Case 1: $(n,p)=(100,600)$  &  Case 1: $(n,p)=(200,900)$  \\
\includegraphics[width=0.4\textwidth]{./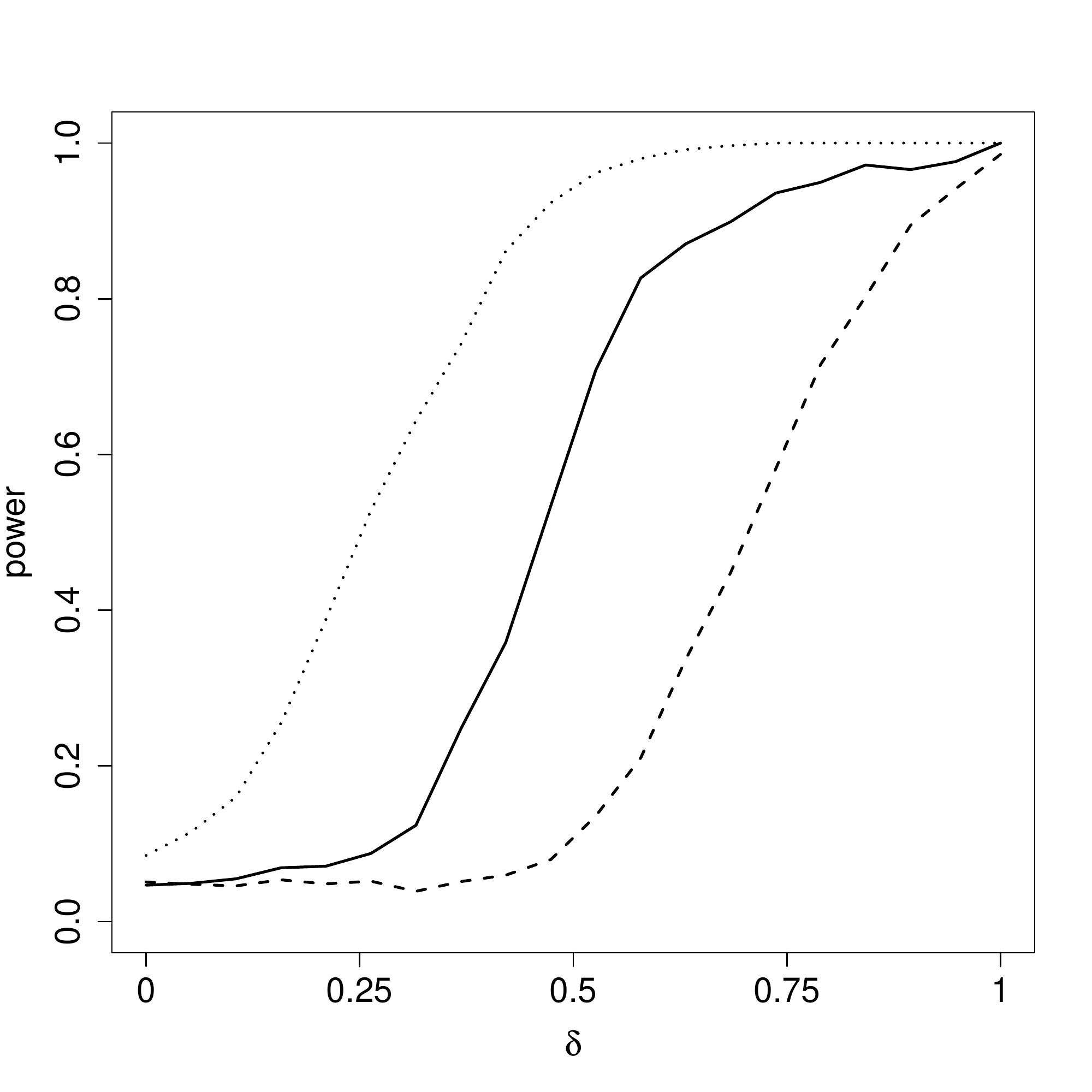} & 
\includegraphics[width=0.4\textwidth]{./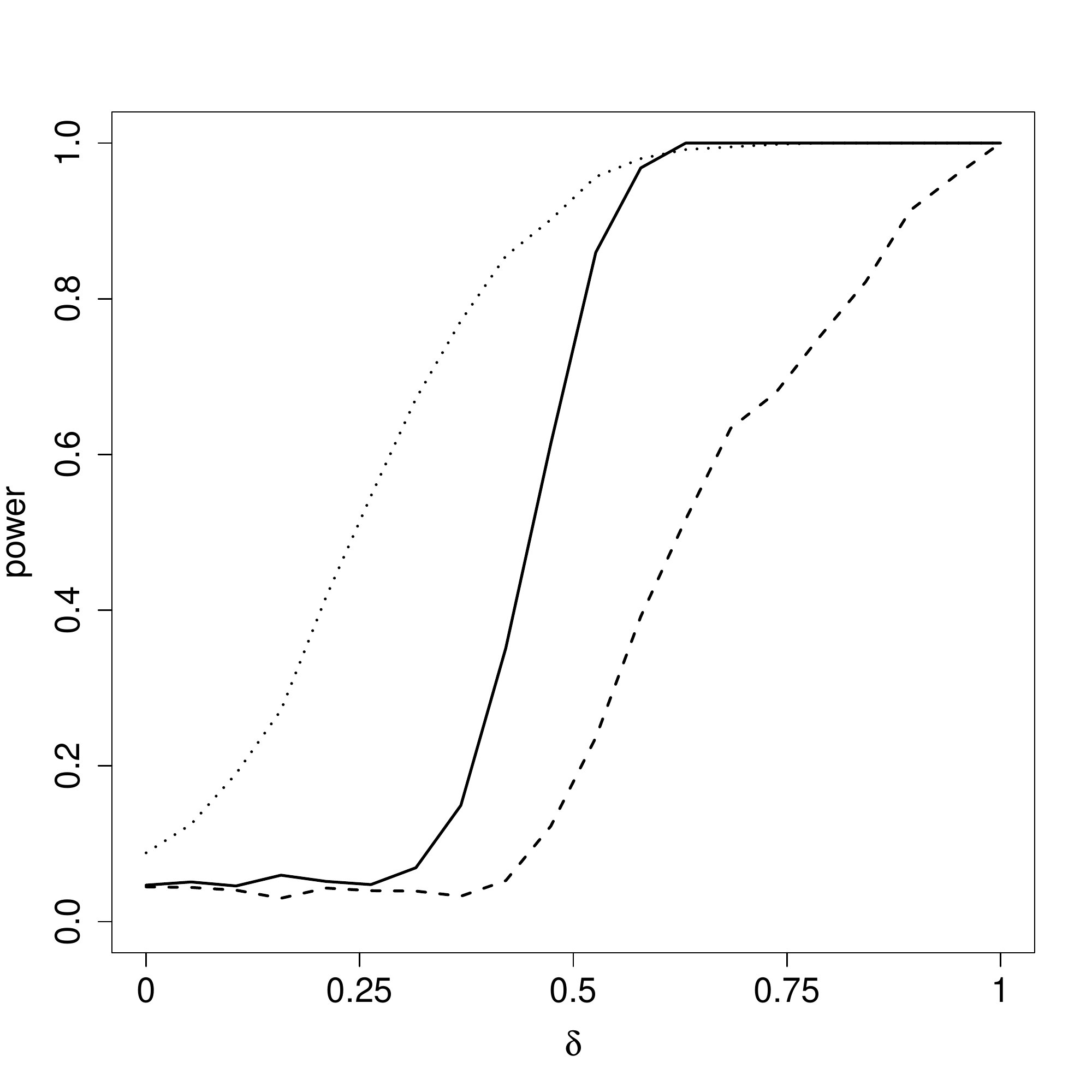} \\
Case 2: $(n,p)=(100,600)$  &  Case 2: $(n,p)=(200,900)$  \\
\end{tabular}
\caption{Empirical size and power of testing a linear combination of predictors for the factor-adjusted Wald test (solid line), the Wald test of \citet{shi2019linear} (dashed line), and the test of \cite{ZhuBradic2018} (dotted line).}
  \label{fig:3}
\end{figure}

We again report the proportion of rejections of $H_0$ out of 600 data replications as we vary the value of $\delta$. Figure \ref{fig:3} reports the results for both Cases 1 and 2. In both cases, we see that our factor-adjusted Wald test and the test of \citet{shi2019linear} achieve a good control of the type I error at the nominal level $\alpha=0.05$ when $\delta = 0$. But our test achieves a much improved power as $\delta$ increases. The main reason is that, in this example, the variables are highly correlated with each other. The factor adjustment alleviates such high correlations, and yields a better variable selection and estimation of the true regression coefficients, which in turn benefits the inference. Meanwhile, in Case 1, the test of \cite{ZhuBradic2018} yields a type I error that is much larger than the nominal level. This is because, in this case, the variables are driven by the latent factors, which leads to a wide spectrum of the eigenvalues of the covariate covariance matrix. However, \cite{ZhuBradic2018} required the eigenvalues to be bounded. In Case 2, the variables are not generated from a factor model. In this case, the test of \cite{ZhuBradic2018} enjoys the best power. Nevertheless, it still suffers from an inflated type I error, which is around 0.09 and is about twice as large as the nominal level. Moreover, we recall that both our test and the test of \citet{shi2019linear} can jointly test multiple linear combinations, while the test of \cite{ZhuBradic2018} was designed to test a single linear combination.

\subsection{Multimodal neuroimaging analysis}
\label{sec:real-data}

We illustrate our methods with a multimodal neuroimaging analysis to study Alzheimer's disease (AD). AD is an irreversible neurodegenerative disorder characterized by progressive impairment of cognitive and memory functions. It is the leading form of dementia in elderly subjects, and is the sixth leading cause of death in the United States. In 2018, AD affects over 5.5 million Americans, and without any effective treatment and prevention, this number is projected to almost triple by 2050 \citep{ADfacts2018}. Tau is a hallmark pathological protein of AD, and is believed to be part of the driving mechanism of the disorder. It is present in the brains of both AD subjects and the elderly absent of dementia. Brain atrophy is another well known characteristic that differentiates between AD and normal aging. We study a dataset with $n = 125$ subjects. Each subject receives a positron emission tomography (PET) scan with AV-1451 tracer that measures accumulation of tau protein, as well as an anatomical magnetic resonance imaging (MRI) scan that measures brain grey matter cortical thickness. We map both types of images to a common brain atlas from Free Surfer, then summarize each PET and MRI image by a 58-dimensional vector, with each entry measuring the tau accumulation and cortical thickness of a particular brain region-of-interest (ROI), respectively. We remove some regions with quality issues for the PET images, which result in $p_1 = 51$ ROIs for PET, and $p_2 = 58$ ROIs for MRI, for each subject. In our integrative analysis, the tau and cortical thickness measurements form the two modalities $\x_1$ and $\x_2$. Memory score is a critical measure of cognitive decline for AD, and in our analysis, the memory score after removing potential age and sex effects is the response variable $y$. 

\begin{table}[t!]
  \begin{center} \footnotesize
    \begin{tabular}[h]{ll|rrrr} \hline
      & & R.rostantcing & L.superiorparietal & L.inferiorparietal & L.middletemporal\\
      Coefficient & $\betabh_1$ & $0.07$ & $0.12$ & 0 & 0 \\
      & $\betabh_2$ & 0 & 0 & $-0.18$ & $-0.01$ \\
      $p$-value  &             & $0.05$ & $0.03$ & $0.001$ & $0.1$ \\ \hline
      & & L.parahippocampal & L.rostantcing & R.parstriangularis & R.superiortemporal\\
      Coefficient & $\betabh_1$ & 0 & 0 & 0 & 0 \\
      & $\betabh_2$ & $0.07$ & $0.06$ & $0.01$ & $0.12$ \\
      $p$-value  &             & $0.02$ & $0.03$ & $0.14$ & $0.03$ \\ \hline
      & & R.supramarginal & R.temppole &  & \\
      Coefficient & $\betabh_1$ & 0 & 0 &  &  \\
      & $\betabh_2$ & $0.11$ & $0.04$ &  &  \\
      $p$-value  &             & $0.02$ & $0.05$ &  & \\ \hline
    \end{tabular}
    \caption{The identified brain regions with the coefficient estimates and the corresponding $p$-values of the factor-adjusted Wald test for the significance of the brain regions.}
    \label{tab:1}
  \end{center}
\end{table}

We estimate the number of latent factors using the method of \citet{Bai2012}, which concludes that there are $\hat{K}_1=3$ factors in the tau modality and $\hat{K}_2 = 1$ factor in the cortical thickness modality. We then estimate $\betabs$ and $\gammabs$ using (\ref{eq:7}) with a SCAD penalty, where the tuning parameter was chosen by cross-validation. We then apply the three methods we develop in this article. We first test the significance of the entire modality using the factor-adjusted score test in Section \ref{sec:test-whole}. The $p$-values are $4.9 \times 10^{-7}$ and $1.2 \times 10^{-3}$, for testing the significance of tau and cortical thickness modality, respectively. As such, both modalities are clearly significantly associated with the memory outcome. We then report the estimated non-zero coefficients from our integrative factor model and their corresponding brain regions in Table \ref{tab:1}. We further carry out the factor-adjusted Wald test in Section \ref{sec:test-lincomb} to evaluate if the identified regions are significantly correlated with the outcome in either modality. We report the corresponding $p$-values in Table \ref{tab:1} as well. Our findings agree with the AD literature. For instance, the ROI with the smallest $p$-value we found is inferior parietal lobe, which is one of brain regions that is known to be associated with progression from healthy aging to AD \citep{Greene2010}. Another significant ROI is parahippocampal gyrus, and cortical thinning of this region has been identified as an early biomarker of AD \citep{Echavarri2011, Krumm2016}. Finally, we evaluate the contribution of each individual modality. If we include the tau modality $\x_1$ in the model first, and add the cortical thickness modality $\x_2$ next, we have $\hat{\sigma}_1^2 = \hat{\var}(\x_1'\betabs_1) = 0.11$, and $\hat{\sigma}^2_{2|1} = \hat{\var}(\x_2'\betabs_2|\x_1) = 0.18$. Correspondingly, $\hat{\sigma}_1^2 / \hat\sigma_y^2 = 14\%$, and $\hat{\sigma}^2_{2|1}/\hat{\sigma}^2_y=24\%$. In other words, the tau modality explains $14\%$ total variation in the response, and adding the cortical thickness modality explains an additional $24\%$ total variation. On the other hand, if we include the cortical thickness modality $\x_2$ in the model first, and add the tau modality $\x_1$ next, we have $\hat{\sigma}_2^2 = \hat{\var}(\x_2'\betabs_2) = 0.19$, and $\hat{\sigma}^2_{1|2} = \hat{\var}(\x_1'\betabs_1|\x_2) = 0.08$. Correspondingly, $\hat{\sigma}_2^2 / \hat\sigma_y^2 = 25\%$, and $\hat{\sigma}^2_{1|2}/\hat{\sigma}_y^2=11\%$. In other words, the cortical thickness modality explains $25\%$ total variation in the response, and adding the tau modality explains an additional $11\%$ total variation. We also note that, the explained variation depends on which modality is already included in the model, and the total explained variations do not necessarily match if the two modalities enter the model in different orders.

\section{Discussion}
\label{sec:discussion}

In recent years, high-dimensional inference has seen many fruitful results. Particularly, \cite{javanmard2020general,Cai2017,ZhuBradic2017} have developed a family of flexible debiased methods to test the hypothesis of the form $\betabs\in \mathcal{C}$, where $\mathcal{C}$ is a general set. By choosing different $\mathcal{C}$, these methods can handle a wide range of inference problems, and can potentially address the inference questions we target in this paper too. We view the proposed solution and the debiased method as two legitimate and complementary inferential approaches, each with its own strength and limitation. Specifically, the debiased method does not require the beta-min condition that is difficult to check in practice, but instead requires the eigenvalues of $\var(\x)$ to be bounded. Besides, it can test the linear combination of the whole $p$-dimensional parameters $\betabs$, while the existing work so far focuses on a single linear combination. By contrast, our proposal assumes the variables are driven by latent factors that satisfy a pervasive condition, and thus allows the largest eigenvalue of $\var(\x)$ to diverge. This may be desirable in the context of multimodal data analysis, as the predictor covariance matrix can have spiked eigenvalues. Nevertheless, our proposal requires the beta-min condition. In addition, we aim at testing multiple linear combinations, but require the number of linear combinations $r$ and the number of involved parameters $t$ in \eqref{eqn:hypothesis-lincomb} to be smaller than the sample size. In conclusion, we believe our proposal offers a useful solution to address some important inference questions in multimodal data analysis. Meanwhile, developing the debiased counterpart provides an important alternative, and is warranted for future research.


\section{Supplementary Materials}
\label{sec:suppl-mater}

\subsection{Proof of Theorem \ref{thm:1}}
\begin{proof}
We first prove the case when $\sigmae^2$ is known, then prove it when $\sigmae^2$ is unknown. Let
\begin{eqnarray*}
\tilde{\I}_{\thetab_{m}|\betab_{-m}} & = & \sigmae^{-2}\left\{\frac{1}{n} \sum_{i=1}^n \hat{\f}_{i,m}^{\otimes 2} - \hat{\W}' \left(\frac{1}{n}\sum_{i=1}^n \x_{i,-m} \hat{\f}_{i,m}' \right)\right\}, \\ 
\tilde{\S}(\betabh_{-m},\zero) & = & \frac{1}{n\sigmae^2} \sum_{i=1}^n (y_i-\x_{i,-m}'\betabh_{-m})(\hat{\f}_{i,m}-\hat{\W}'\x_{i,-m}). \end{eqnarray*}
We first show that, under $H_0$, when $\sigmae^2$ is known, 
\begin{equation} \label{eq:16}
\tilde{\T}_n=\sqrt{n}\tilde{\I}_{\thetab_m|\betab_{-m}}^{-1/2}\tilde{\S}(\betabh_{-m},\zero) \xrightarrow{D} N(\zero,\I_{K_m}).
\end{equation}

To prove (\ref{eq:16}), we show the following two results:
\begin{eqnarray}
    \sqrt{n}\Itruehalf\tilde{\S}(\betabh_{-m},\zero) & \xrightarrow{D} & N(\zero,\I_{K_m}), \label{eq:17} \\
    \Itilde & \xrightarrow{P} & \Itrue. \label{eq:18}
\end{eqnarray}
Then given (\ref{eq:17}) and (\ref{eq:18}), we obtain (\ref{eq:16}) by applying Slutsky's Theorem. 
  
To prove (\ref{eq:17}), define
\vspace{-0.05in}
\begin{equation*}
\ell(\betasnom,\thetasm)=\frac{1}{2n\sigmae^2} \ltwonorm{\Y-\X_{-m}\betasnom-\hat{\F}_m\thetasm}^2.
\end{equation*}
Then,
$\nabla_{\betanom} \ell(\betasnom,\thetasm) = - (n\sigmae^2)^{-1} (\Y-\X_{-m}\betabs_{-m}-\hat{\F}_m\thetabs_m)'\X_{-m}$, $\nabla^2_{\betanom\thetam} \ell(\betasnom,\thetasm) = (n\sigmae^2)^{-1} \hat{\F}_m'\X_{-m}$, and $\nabla^2_{\betanom\betanom} \ell(\betasnom,\thetasm) = (n\sigmae^2)^{-1} \X_{-m}'\X_{-m}$. Let  
\[
\S(\betasnom,\zero) = \frac{1}{n\sigmae^2}\sum_{i=1}^n (y_i-\x_{i,-m}\betasnom)(\f_{i,m}-{\W^{\ast}}'\x_{i,-m}).
\]
Noting that $\W^{\ast}=\E(\x_{i,-m}^{\otimes 2})^{-1}$ $\E(\x_{i,-m}\f_{i,m}')$, we have
\begin{equation} \label{eq:19}
\begin{split}
      \tilde{\S}(\betahnom,\zero) & = \S(\betasnom,\zero)-\frac{1}{n\sigmae^2}(\W^{\ast}-\hat{\W})' \X_{-m}'(\Y-\X_{-m}\betasnom) \\
      & \hspace{5.5ex} - \frac{1}{n\sigmae^2}(\hat{\F}_m'\X_{-m}-\hat{\W}'\X_{-m}'\X_{-m})(\betabh_{-m}-\betabs_{-m}).
\end{split}
\end{equation}
For the second summand on the right-hand-side of (\ref{eq:19}), by Lemma \ref{lemma:B2}, for each $k\in [K_m]$,
\begin{equation}
\label{eq:104}
\begin{split}
      & \hspace{3ex}\left|\frac{1}{n\sigmae^2} (\ws_k-\wh_k)'\X_{-m}'(\Y-\X_{-m}\betabs_{-m}) \right| \\
      & \leq  \lonenorm{\ws_k-\wh_k} \ssupnorm{\frac{1}{n\sigmae^2} \X'_{-m}(\Y-\X_{-m}\betabs_{-m})} 
      = \lonenorm{\ws_k-\wh_k} \ssupnorm{\frac{1}{n\sigmae^2} \X_{-m}'\epsilonb} \\
      & =  O_P\left(\ss_k \left[\sqrt{(\log p_{-m})/n}\{1\vee (n^{1/4}/\sqrt{p_m}) \}\right] \right)O_P\left( \sqrt{(\log p_{-m})/n}\right)
      =  o_P\left(n^{-1/2} \right),
\end{split}
\end{equation}
where $\ss_k=|\supp(\ws_k)|$. For the third summand on the right-hand-side of (\ref{eq:19}), let $(\hat{\F}_m)_k$ denote the $k$th column of $\hat{\F}_m$. By Lemmas \ref{lemma:B1} and \ref{lemma:B3}, for each $k \in [K_m]$,
\begin{equation}
\label{eq:105}
\begin{split}
      &\hspace{3ex}\left|\frac{1}{n\sigmae^2}\{(\hat{\F}_m)_k'\X_{-m}-\wh_k'\X_{-m}'\X_{-m} \}(\betabh_{-m}-\betabs_{-m}) \right| \\
      &\leq  \ssupnorm{\frac{1}{n\sigmae^2} \{(\hat{\F}_m)_k'\X_{-m}-\wh_k'\X_{-m}'\X_{-m} \}} \lonenorm{\betahnom-\betasnom} \\
      &=  O_P\left(\ss_{-m} \left\{\sqrt{(\log p_{-m})/n}+1/\sqrt{p_m} \right\} \right) O_P\left(\sqlogqn \left(1\vee \frac{n^{1/4}}{\sqrt{p_m}} \right) \right) \\
      &=  o_P\left(n^{-1/2} \right).
\end{split}
\end{equation}
Together with the central limit theorem of $\sqrt{n} \Itruehalf\S(\betasnom,\zero)$, we prove (\ref{eq:17}). 

To prove (\ref{eq:18}), since the dimension $K_m$ is fixed, all matrix norms are equivalent. In particular, we show that
\begin{equation}
\label{eq:103}
    \supnorm{\tilde{\I}_{\thetab_m|\betab_{-m}}-\Itrue}=o_P\left(1 \right).
\end{equation}
Let $( \tilde{\I}_{\thetab_m|\betab_{-m}}-\Itrue )_k$ denote the $k$th row of $\tilde{\I}_{\thetab_m|\betab_{-m}}-\Itrue$. By the definition of the information matrix $\I^{\ast}_{\thetab_{m}|\betab_{-m}}$, the identifiability assumption that $\E(\f_{i,m}^{\otimes 2})=\I_{K_m}$, and furthermore the fact that $(1/n) \sum_{i=1}^n \fh_{i,m}^{\otimes 2}=\I_{K_m}$, we have
\begin{eqnarray*}
    & & \supnorm{(\Itilde-\Itrue)_{k}} 
        = \sigmae^{-2}\ssupnorm{\wh_k' \left( \frac{1}{n} \sum_{i=1}^n \x_{i,-m}\fh_{i,m}' \right) -
        {\ws_k}'\E(\x_{i,-m}\f_{i,m}')} \\ 
    & \lesssim & \ssupnorm{(\wh_k-\ws_k)' \left(\frac{1}{n} \sum_{i=1}^n \x_{i,-m}\fh_{i,m}' \right)} 
                 + \ssupnorm{{\ws_k}' \left\{\frac{1}{n} \sum_{i=1}^n \x_{i,-m}\fh'_{i,m}- \E(\x_{i,-m}\f_{i,m}')\right\}}.
\end{eqnarray*}
For the first term, let $\hat{f}_{i,m_h}$ be the $h$th element of $\fh_{i,m}$, we have  
\begin{align*}
    & \hspace{3ex}\ssupnorm{(\wh_k-\ws_k)' \left(\frac{1}{n} \sum_{i=1}^n \x_{i,-m}\fh_{i,m}' \right)} = \max_{h\in [K_m]}
      \left|(\wh_k-\ws_k)' \left(\frac{1}{n} \sum_{i=1}^n \x_{i,-m}\hat{f}_{i,m_h} \right) \right| \\
    & \leq \slonenorm{\wh_k-\ws_k}\cdot \max_{h\in [K_m]} \left[\ssupnorm{\frac{1}{n} \sum_{i=1}^n \x_{i,-m}
      \{\hat{f}_{i,m_h}- \x_{i,-m}'\ws_h\}} \right. \\ 
    & \hspace{3ex} + \left. \ssupnorm{\E(\x_{i,-m}\x_{i,-m}'\ws_h)} 
      + \ssupnorm{\frac{1}{n} \sum_{i=1}^n \left\{\x_{i,-m}\x_{i,-m}'\ws_h - \E(\x_{i,-m}\x_{i,-m}'\ws_h)\right\}}\right] \\
    & = O_P\left(\ss_k \sqlogqn \left(1\vee \frac{n^{1/4}}{\sqrt{p_m}} \right) \right) =o_P\left(1 \right),
\end{align*}
where $s_k=|\supp(\ws_k)|$, and the second-to-last equality follows from Lemma \ref{lemma:B2}, and the fact that the dominating term in the bracket is $\supnorm{\E(\x_{i,-m}\x_{i,-m}'\ws_h)}=O(1)$. For the second term,
\begin{align*}
    & \ssupnorm{{\ws_k}' \left\{\frac{1}{n} \sum_{i=1}^n \x_{i,-m}\fh'_{i,m} -
      \E(\x_{i,-m}\f'_{i,m})\right\}}
      = \max_{h\in[K_m]}\left|{\ws_k}' \left\{\frac{1}{n} \sum_{i=1}^n \x_{i,-m}\hat{f}_{i,m_h} -
      \E(\x_{i,-m}f_{i,m_h})\right\} \right| \\
    &\leq \lonenorm{\ws_k}\cdot \max_{h\in [K_m]} \left[\ssupnorm{\frac{1}{n} \sum_{i=1}^n \x_{i,-m}\left(\hat{f}_{i,m_h} -
      \x_{i,-m}'\ws_h\right)} \right. \\
    & \hspace{8ex}\left.+ \ssupnorm{\frac{1}{n} \sum_{i=1}^n \left\{\x_{i,-m}\x_{i,-m}'\ws_h -
      \E(\x_{i,-m}\x_{i,-m}'\ws_h) \right\}} \right] \\   
    & = O_P\left(\ss_k \sqlogqn \left(1\vee \frac{n^{1/4}}{\sqrt{p_m}} \right) \right) =o_P\left(1 \right),
\end{align*}
where the second-to-last equality follows from (\ref{eq:47}), and the sub-Gaussian assumption on $X_{ij}$ and $\x_{i,-m}'\ws_h$, which is implied by (\ref{eq:1}) and Condition \ref{con:1}. 

Next, when $\sigmae^2$ is unknown, we have $\hat{\T}_n-\tilde{\T}_n=\tilde{\T}_n(\sigmae/\sigmaeh-1)=o_P\left(1 \right)$, which is implied by Condition \ref{con:5}. Then, applying Slutsky's Theorem completes the proof. 
\end{proof}

\subsection{Proof of Theorem \ref{thm:4}}
\begin{proof}
We divide the proof into two main steps. 

In Step 1, letting $\T_n^{\ast}=\sqrt{n} \Itruehalf \left\{ \S(\betabs,\gammabs_m)-\Itrue\gammasm \right\}$, and $\Qs_n = (\T_n^{\ast})'\T_n^{\ast}$, we show that $Q_n=\Qs_n+o_P\left(1 \right)$. First, recall $\tilde{\T}_n$ as defined in (\ref{eq:16}), we have
\begin{equation} \label{eqn:thm4-eq1}
    \T_n=(\sigma_{\epsilon}\sigmah_{\epsilon}^{-1})\tilde{\T}_n=\tilde{\T}_n +
    \frac{\sigma_{\epsilon}-\sigmah_{\epsilon}}{\sigmah_{\epsilon}}\tilde{\T}_n=\tilde{\T}_n+o_P\left(1 \right),
\end{equation}
where the last equality follows from Condition \ref{con:5}, and the $o_P$ statement applies to each element of $\T_n$. Next, we show that $\tilde{\T}_n=\T_n^{\ast}+o_P\left(1 \right)$. Letting $\T_n^{\dagger}=\sqrt{n} \Itruehalf \S(\betasnom,\zero)$, we have that 
\begin{equation*}
    \tilde{\T}_n=\T_n^{\dagger}+ \sqrt{n} \Itruehalf \{\tilde{\S}(\hat{\betab}_{-m},\zero)-\S(\betasnom,\zero) \} +
    \sqrt{n} \{\Itildehalf-\Itruehalf \}\tilde{\S}(\hat{\betab}_{-m},\zero).
\end{equation*}
By Lemmas \ref{lemma:B6} and \ref{lemma:B7}, uniformly for all $\betabs\in \mathcal{N}$, we have that 
\begin{align*}
    \sqrt{n} \Itruehalf \{\tilde{\S}(\hat{\betab}_{-m},\zero)-\S(\betasnom,\zero) \}&=o_P\left(1 \right),\\
    \sqrt{n} \{\Itildehalf-\Itruehalf \}\tilde{\S}(\hat{\betab}_{-m},\zero)&= o_P\left(1 \right).
\end{align*}
Therefore, $\tilde{\T}_n=\T^{\dagger}_n+o_P\left(1 \right)$. Recall that $\S(\betabs,\gammasm)= (n\sigmae^2)^{-1} \sum_{i=1}^n \epsilon_i \left( \f_{i,m}-{\W^{\ast}}'\x_{i,-m} \right)$. Henceforth, we have
\begin{align*}
\T^{\dagger}_n &= \sqrt{n} \Itruehalf\{\S(\betabs,\gammabs_m)-\Itrue\gammasm \} \\
&\hspace{3ex}+ \sqrt{n}\Itruehalf \{ \S(\betasnom,\zero)-\S(\betabs,\gammasm)+\Itrue\gammasm    \}\\
& = \sqrt{n} \Itruehalf\{\S(\betabs,\gammabs_m)-\Itrue\gammasm \} + o_P\left(1 \right) 
= \T_n^{\ast}+o_P\left(1 \right),
\end{align*}
where the second equality follows from Lemma \ref{lemma:B8}.  Therefore, we have $\tilde{\T}_n=\T_n^{\ast}+o_P\left(1 \right)$. Together with \eqref{eqn:thm4-eq1} and the continuous mapping theorem, we have $Q_n=\Qs_n+o_P\left(1 \right)$, which completes Step 1. 

In Step 2, we derive the $\chi^2$ approximation of $\Qs_n$. By definition,
\begin{align*}
    &\hspace{3ex}\sqrt{n} \Itruehalf\{\S(\betabs,\gammasm)-\Itrue\gammasm \}\\
    & = \sqrt{n} \Itruehalf \left\{\frac{1}{n\sigmae^2} \sum_{i=1}^n \epsilon_i(\f_{i,m}-{\W^{\ast} }'\x_{i,-m})
      \right\}- \sqrt{n} \I^{\ast^{1/2}}_{\gammam|\betab_{-m}}\gammasm\\
    &= \sum_{i=1}^n \xib_i-\sqrt{n} \I^{\ast^{1/2}}_{\gammam|\betab_{-m}} \gammasm,
\end{align*}
where $\xib_i=(\sqrt{n}\sigmae^2)^{-1}\Itruehalf \epsilon_i(\f_{i,m}- {\W^{\ast} }'\x_{i,-m})$. By direct calculation, we have that $\E(\xib_i)=\zero$, and $\sum_{i=1}^n \var(\xib_i)=\I_{K_m}$. By Conditions \ref{con:1} and \ref{con:21}, we have
\begin{align*}
    \sum_{i=1}^n \E \ltwonorm{\xib}^3
    &= \frac{1}{(n\sigmae^4)^{3/2}} \E|\epsilon_i|^3 \sum_{i=1}^n \E
      \ltwonorm{\Itruehalf(\f_{i,m}- {\W^{\ast} }'\x_{i,-m})}^3 \\
    &\lesssim \frac{1}{\sqrt{n}} (\E \ltwonorm{\f_{i,m}}^3+\E \ltwonorm{{\W^{\ast}
      }'\x_{i,-m}}^3)=o(1). 
\end{align*}
Then, by Lemma \ref{lemma:C4}, we have that  
\begin{equation} \label{eq:101}
    \sup_{\mathcal{C}} |\pr(\sum_{i=1}^n\xib_i\in \mathcal{C})-\pr(\Z\in \mathcal{C})| \to 0,
\end{equation}
where $\Z\sim N(0,K_m)$, and the supremum is taken over all convex sets $\mathcal{C} \in \mathcal{R}^{K_m}$. Consider a special subset $\mathcal{C}_x$ of $\mathcal{C}$, such that $\mathcal{C}_x= \{\z\in \mathcal{R}^{K_m}: \ltwonorm{\z-\sqrt{n}\I^{\ast^{1/2}}_{\gammab_m|\betab_{-m}}\gammasm}^2 \leq x \}$. It then follows from (\ref{eq:101}) that
\begin{equation*}
    \sup_x|\pr(\Qs_n\leq x) -
    \pr(\chi^2(1,h_n)\leq x)|=\sup_{x} |\pr(\sum_{i=1}^n\xib_i \in \mathcal{C}_x)-\Pr(\Z\in \mathcal{C}_x) | \to 0,  
\end{equation*}
where $h_n=n{\gammasm}'\Itrue\gammasm$. Since $Q_n= \Qs_n+o_P\left(1 \right)$. For any $x$ and $\varepsilon>0$, we have
\begin{equation} \label{eq:102}
    \begin{split} 
      \pr(\chi^2(1,h_n) & \leq  x-\varepsilon)+o(1) \leq \pr(\Qs_n\leq x - \varepsilon) + o(1) \leq \pr(Q_n\leq x) \\
      & \leq  \pr(\Qs_n\leq x+\varepsilon)+o(1) \leq \pr\left\{ \chi^2(1,h_n)\leq x+\varepsilon
      \right\} + o(1).  
    \end{split}
\end{equation}
In addition, by Lemma \ref{lemma:C5}, we have 
\begin{equation*} \label{eq:39}
    \lim_{\varepsilon\to 0} \limsup_n \left|\pr\{\chi^2(1,h_n)\leq x+\varepsilon \} - \pr\{\chi^2(1,h_n)\leq
      x-\varepsilon \}\right| \to 0.
\end{equation*}
Together, we have 
\begin{equation*}
    \sup_x \left|\pr(Q_n\leq x)-\pr(\chi^2(1,h_n)\leq x) \right| \to 0. 
\end{equation*}
This completes the proof.
\end{proof}

\subsection{Proof of Corollary \ref{cor:1}}
\begin{proof}
We only prove (b) when $\phi_{\gamma_m}=1/2$. The proofs of (a) and (c) are similar. 

Note that
\begin{align*}
    &\hspace{3ex}|\pr(Q_n\leq x)-\pr(\chi^2(K_m,h)\leq x)| \\
    &\leq |\pr(Q_n\leq x)-\pr(\chi^2(K_m,h_{m_n})\leq x)|+|\pr(\chi^2(K_m,h_{m_n})\leq
    x)-\pr(\chi^2(K_m,h)\leq x)| 
\end{align*}
Then, by Theorem \ref{thm:4}, it suffices to prove that
\begin{equation*}
    \lim_{n\to \infty} \sup_{x>0}|\pr(\chi^2(K_m,h_{m_n})\leq x)-\pr(\chi^2(K_m,h)\leq x)| =0.
\end{equation*}
Let $F(x;k,\lambda)$ denote the cumulative distribution function of the non-central $\chi^2$-distribution with $k$ degrees of freedom and non-centrality parameter $\lambda$, and $F(x;k)$ that of the central $\chi^2$-distribution with $k$ degrees of freedom. Then, 
\begin{equation*}
    F(x;k,\lambda)= e^{-\lambda/2} \sum_{j=1}^{\infty} \frac{(\lambda/2)^j}{j!} F(x;k+2j),
\end{equation*}
We have that
\begin{align*}
    &\hspace{3ex} \pr(\chi^2(K_m,h_{m_n})\leq x) - \pr(\chi^2(K_m,h)\leq x) \\
    &= e^{-h_{m_n}/2} \left\{F(x;K_m) + \sum_{j=1}^{\infty} \frac{(h_{m_n}/2)^j}{j!}F(x;K_m+2j) \right\}\\
    &\hspace{2.5ex}- e^{-h/2} \left\{F(x;K_m) + \sum_{j=1}^{\infty} \frac{(h/2)^j}{j!}F(x;K_m+2j) \right\}  
\end{align*}
Then, as $n\to \infty$,
\begin{align*}
    &\hspace{3ex}\sup_{x>0} |\pr(\chi^2(K_m,h_{m_n})\leq x) - \pr(\chi^2(K_m,h)\leq x)|\\
    &\leq |e^{-h_{m_n}/2}-e^{-h/2}| \sup_{x>0}\left\{F(x;K_m)+\sum_{j=1}^{\infty} \frac{(h_{m_n}/2)^j}{j!}F(x;K_m+2j)
      \right \} \\
    &\hspace{3ex}+ e^{-h/2}\sup_{x>0}\left\{\sum_{j=1}^{\infty} \frac{|(h_{m_n/2})^j-(h/2)^j|}{j!} F(x;K_m+2j)\right\}\\
    &\leq |e^{-h_{m_n}/2}-e^{-h/2}|e^{h_{m_n}/2}+e^{-h/2} \sum_{j=1}^{\infty}\frac{|(h_{m_n}/2)^j-(h/2)^j|}{j!} \to 0.
\end{align*}
This completes the proof. 
\end{proof}

\subsection{Proof of Theorem \ref{thm:2}}
\begin{proof} 
We first prove (a) to (c) of the theorem. Our main idea is to show that, by Lemma \ref{lemma:C2}, $\Fh$ is consistent to $\H\F$ for some nonsingular matrix $\H$. Moreover, $\Uh$ is consistent to $\U$. Then solving (\ref{eq:14}) is equivalent to solving the same problem by replacing $(\Fh,\Uh)$ with $(\F, \U)$. More precisely, by the Karush-Kuhn-Tucker conditions, any vector $(\gammabh_a,\betabh_a)$ satisfying the following equations is a solution to (\ref{eq:14}):
\begin{align}
    \frac{1}{n} \Fh'(\Y-\Fh\gammabh_a-\Uhts\betahats)&=\zero; \label{eq:20}\\
    \frac{1}{n} \Uh_T'(\Y-\Fh\gammabh_a-\Uhts\betahats)&=\zero;  \label{eq:21}\\
    \frac{1}{n} \Uh_{S_a}'(\Y-\Fh\gammabh_a-\Uhts\betahats)&=\lambda_a
                                                             \dot{p}(|\betabh_{a,S_a}|)I(\betabh_{a,S_a}>\zero);  \label{eq:22}\\ 
    \ssupnorm{\frac{1}{n}\Uh_{(\ts)^c}'(\Y-\Fh\gammabh_a-\Uhts\betahats)} &< \lambda_a \dot{p}(0+). \label{eq:23} 
\end{align}
where $\dot{p}(\cdot)$ is a vector of first derivatives of $p(\cdot)$, and $I(\cdot)$ is a vector of indicator functions applied to each coordinate of $\betabh_{a,S_a}$.

We divide the proof into two main steps. In Step 1, letting $\mathcal{M}=\{(\gammab,\betab): \supnorm{\gammab-\H'\gammabs}\leq C\delta_n, \supnorm{\betab-\betabs_{\ts}}\leq C\delta_n\}$ for some constant $C$, we show that, with probability tending to 1, there exists a vector $(\gammabh_a,\betahats)$ in $\mathcal{M}$ that satisfies (\ref{eq:20}), (\ref{eq:21}) and (\ref{eq:22}). In Step 2, we set $\betabh_a=(\betahats,\zero)$, and show that $(\gammabh_a,\betabh_a)$ satisfies (\ref{eq:23}). Together these two steps prove (a) and (b) of the theorem. Then (c) follows from $\ltwonorm{\betahats-\betasts}\leq \sqrt{t+s_a} \supnorm{\betahats-\betasts}$.

For Step 1, by (\ref{eq:1}), (\ref{eq:4}), and $\zetab=\F\gammabs-\Fh\gammabh_a+(\Uts-\Uhts)\betahats$, we have
\begin{equation*}
    \Y-\Fh\gammabh_a-\Uhts\betahats=\Uts(\betasts-\betahats)+\epsilonb+\zetab.
\end{equation*}
Therefore,
\begin{eqnarray}
    & & \frac{1}{n}
        \begin{pmatrix}
          \Uht' \\ \Uhs'
        \end{pmatrix}
    \left( \Y-\Fh\gammabh_a-\Uhts\betahats \right) \nonumber \\
    & = & \frac{1}{n}
          \begin{pmatrix}
            \Uht'\Ut & \Uht'\Us \\ \Uhs'\Ut  & \Uhs'\Us
          \end{pmatrix}
    \begin{pmatrix}
    \betabs_T-\betabh_{a,T} \\ \betabs_{S_a}-\betabh_{a,S_a}
    \end{pmatrix}
    + \frac{1}{n}
    \begin{pmatrix}
      \Uht'(\epsilonb+\zetab) \\ \Uhs'(\epsilonb+\zetab)
    \end{pmatrix} \nonumber \\
    & = &\K_n
          \begin{pmatrix}
            \betabs_T-\betabh_{a,T} \\ \betabs_{S_a}-\betabh_{a,S_a}
          \end{pmatrix}+ \frac{1}{n}
    \begin{pmatrix}
      \Ut'\epsilonb \\ \Us'\epsilonb
    \end{pmatrix} 
    + \frac{1}{n}
    \begin{pmatrix}
      (\Uht-\Ut)'\epsilonb+\Uht'\zetab \\ (\Uhs-\Us)'\epsilonb+\Uhs'\zetab
    \end{pmatrix} \nonumber \\
    & & - \, \frac{1}{n}
        \begin{pmatrix}
          (\Ut-\Uht)'\Ut & (\Ut-\Uht)'\Us \\ (\Us-\Uhs)'\Ut & (\Us-\Uhs)'\Us
        \end{pmatrix}
    \begin{pmatrix}
    \betabs_T-\betabh_{a,T} \\ \betabs_{S_a}-\betabh_{a,S_a}
    \end{pmatrix} \nonumber \\ 
    & = & \K_n
          \begin{pmatrix}
            \betabs_T-\betabh_{a,T} \\ \betabs_{S_a}-\betabh_{a,S_a}
          \end{pmatrix} 
    + \frac{1}{n}
    \begin{pmatrix}
      \Ut'\epsilonb \\ \Us'\epsilonb
    \end{pmatrix} 
    +
    \begin{pmatrix}
      \RR_T \\\RR_{S_a}
    \end{pmatrix}, \label{eq:24}
\end{eqnarray}
where
\begin{equation*} \label{eq:25}
    \RR_{\ts} = \frac{1}{n} \left\{ (\Uhts-\Uts)'\Uts\betasts+\Uhts'(\F\gammabs-\Fh\gammabh_a) + (\Uhts-\Uts)'\epsilonb \right\}.
\end{equation*}
By Lemma \ref{lemma:B4}, we have
\vspace{-0.05in}
\begin{equation*}
    \supnorm{\RR_{\ts}}=O_P\left(\sqrt{\frac{\log p}{n}}\left(\frac{1}{\sqrt{n}}+\frac{n^{1/4}}{\sqrt{p_{\min}}} \right) \right).
\end{equation*} 
In addition, by Condition \ref{con:1}, $U_{ij}\epsilon_i$ is sub-exponential. Therefore, by Bernstein inequality and the union bound, we have $\supnorm{n^{-1}\U_{\ts}'\epsilonb}=O_P\left(\sqrt{(\log p)/n} \right)$. Together, 
\begin{equation} \label{eq:26}
    \supnorm{\frac{1}{n}\U_{\ts}'\epsilonb}+\supnorm{\RR_{\ts}}=O_P\left(\delta_n \right), \text{ where } \delta_n=\keyrate.
\end{equation}
By Condition \ref{con:6} and the sub-Gaussian assumption on $\u$, it holds with probability tending to 1 that $\lambda_{\min}(\K_n)$ is bounded away from 0. Then by (\ref{eq:24}), there exists $\betabh_{a,T}\in \mathcal{M}$ that solves (\ref{eq:21}). By the assumption that $\sqrt{n}\lambda_a\dot{p}(d_n)=o(1)$, we have  $\lambda_a\dot{p}(|\betab_{a,S_a}|)\leq\lambda_a\dot{p}(d_n)=o(\delta_n)$ for all $\betab_{a,S_a}\in \mathcal{M}$. Then, by (\ref{eq:24}), there exists $\betabh_{a,S_a}\in \mathcal{M}$ that solves (\ref{eq:22}). Finally, as we show in Lemma \ref{lemma:B5}, $\supnorm{n^{-1}\Fh'(\Y-\Uh \betabh_{a}- \Fh\H'\gammabs)}=O_P\left(\delta_n \right)$, and thus there exists $\betabh_{a,S_a} \in \mathcal{M}$ that solves (\ref{eq:20}), which completes Step 1. 

For Step 2, let $\bdm{\varphi}_{\ts}=(\zero, \lambda_a\dot{p}(|\betabh_{a,S_a}|)I(\betabh_{a,S_a}>\zero))$, we have
\begin{eqnarray*} \label{eq:27}
    & & n^{-1}\Uh_{(\ts)^c}'(\Y-\Fh\gammabh_a-\Uhts\betahats) \\
    & = & n^{-1}\Uh_{(\ts)^c}'\U_{\ts}(\betasts-\betahats) +n^{-1}\Uh_{(\ts)^c}'\epsilonb+n^{-1}\Uh_{(\ts)^c}'\zetab \\
    & = & n^{-1}\U_{(\ts)^c}'\U_{\ts}\K_n^{-1}\{\bdm{\varphi}_{\ts}-n^{-1}\U_{\ts}'\epsilonb-\RR_{\ts}\}\\
    & & + \, n^{-1}\Uh_{(\ts)^c}'\epsilonb + n^{-1}\Uh_{(\ts)^c}'(\F\gammabs-\Fh\gammabh_a) + n^{-1}\Uh_{(\ts)^c}(\U_{\ts}-\Uh_{\ts})\betasts.
\end{eqnarray*}
By Condition \ref{con:8} and the sub-Gaussian assumption on $\U$, with probability tending to 1, we have $\Lsupnorm{n^{-1}\U_{(\ts)^c}'\U_{\ts}\K_n^{-1}} = O(1)$. Therefore, by (\ref{eq:26}), we have
\begin{eqnarray*} \label{eq:28}
    & & \lambda_a^{-1}\supnorm{n^{-1}\U_{(\ts)^c}'\U_{\ts}\K_n^{-1}\{n^{-1}\U_{\ts}'\epsilonb+\RR_{\ts}\}} \\
    & \leq & \lambda_a^{-1}\Lsupnorm{n^{-1}\U_{(\ts)^c}'\U_{\ts}\K_n^{-1}} \supnorm{n^{-1}\U_{\ts}'\epsilonb+\RR_{\ts}} \\
    & = & O_P\left(\delta_n\lambda_a^{-1} \right) = o_P(1).
\end{eqnarray*}
Next,
\begin{eqnarray*} \label{eq:29}
    \lambda_a^{-1}\supnorm{n^{-1}\U_{(\ts)^c}'\U_{\ts}\K_n^{-1}\lambda_a\dot{p}(|\betabh_{a,\ts}|)}
    \lesssim\dot{p}(|\betabh_{a,\ts}|)<\dot{p}(d_n)<\dot{p}(0+).
\end{eqnarray*}
Moreover, by Lemma \ref{lemma:B4},
\begin{eqnarray*}
    & & \lambda_a^{-1}n^{-1}\supnorm{\Uh_{(\ts)^c}'\epsilonb +
        \Uh_{(\ts)^c}'(\F\gammabs-\Fh\gammabh_a)+\Uh_{(\ts)^c}(\U_{\ts}-\Uh_{\ts})\betasts} \nonumber \\ %
    & = & O_P\left(\delta_n \lambda_a^{-1}\right) = o_P(1).     \label{eq:30}
\end{eqnarray*}
Putting together the above results, we have that $\betabh$ satisfies (\ref{eq:23}), which completes Step 2.

Finally, we prove (d) of the theorem. By Lemma \ref{lemma:B4}, when $p_{\min}\gg n^{3/2}$, $\supnorm{\RR_{T \cup S}}=o_P\left(n^{-1/2} \right)$.  Then, it follows from (\ref{eq:21}), (\ref{eq:23}) and (\ref{eq:24}) that 
\begin{eqnarray*}
\sqrt{n}(\betabh_{a,T}-\betabs_T) & = & \frac{1}{\sqrt{n}}\K_n^{-1}\Ut'\epsilonb +o_P\left(1 \right).  \label{eq:31}\\
\sqrt{n}(\betabh_{a,S_a}-\betabs_{S_a}) & = & \frac{1}{\sqrt{n}}\K_n^{-1}\Us'\epsilonb - \K_n^{-1}\sqrt{n}\lambda_a\dot{p}(|\betabh_{a,S_a}|)I(\betabh_{a,S_a}>\zero) + o_P\left(1 \right).  \label{eq:32} 
\end{eqnarray*}
Since $\sqrt{n}\lambda_a\dot{p}(|\betabh_{a,S_a}|)\leq \sqrt{n}\lambda_a\dot{p}(d_n)=o(1)$. Therefore,
\begin{equation*}
    \K_n^{-1}\sqrt{n}\lambda_a\dot{p}(|\betahas|)=O_P\left(\sqrt{n}\lambda_a\dot{p}(d_n) \right)=o_P\left(1 \right).
\end{equation*}
This completes the proof.
\end{proof}

\subsection{Proof of Theorem \ref{thm:3}}
\begin{proof}
We divide the proof into two main steps. 

In Step 1, define $T_0$ as
\begin{equation*} \label{eq:33}
T_0 = \sigmae^{-2}(\omegab_n+\sqrt{n}\h_n)'\Psib^{-1}(\omegab_n + \sqrt{n}\h_n),
\end{equation*}
where $\h_n=\A\betabs-\b$, $\Psib=\A\Omegab_T\A'$, $\Omegab_{T}$ is the the submatrix of $\Omegab_{\ts}=\Sigmab_{u,\ts}^{-1}$ with rows and columns in $T$, and $\Sigmab_{u,\ts}^{-1}$ is inverse of the submatrix of $\Sigmab_u$ with rows and columns in $\ts$,  and $\omegab_n=n^{-1/2}
\begin{pmatrix}
\A & \zero
\end{pmatrix}
\K_n^{-1} \Uts'\epsilonb$. We first show that  $T_w/r=T_0/r+o_P\left(1 \right)$. By Theorem \ref{thm:2}, we have
\begin{equation*}
    \sqrt{n}
    \begin{pmatrix}
      \betahat -\betabs_T \\ \betabh_{S_a} - \betabs_{S_a} 
    \end{pmatrix} 
    = \frac{1}{\sqrt{n}} \K_n^{-1}
    \begin{pmatrix}
      \Ut'\epsilonb \\ \Us'\epsilonb
    \end{pmatrix} 
    + \RR_a,
\end{equation*}
for some remainder term $\RR_a$ such that $\ltwonorm{\RR_a}=o_P\left(1 \right)$. Then we have
\begin{equation*}
    \sqrt{n}\A(\betahat-\betabs_T)=\omegab_n+\A\RR_{a,T},
\end{equation*}
where $\RR_{a,T}$ is the subvector of $\RR_a$ with indices in $T$. By definition, $\A\betabs_T-\b=\h_n$. Then,
\begin{equation*}
    \sqrt{n}(\A\betahat-\b)=\omegab_n+\A\RR_{a,T}+\sqrt{n}\h_n. 
\end{equation*}
Let $\Psib_n=\A(\K_{n}^{-1})_T\A'$, where $(\K_{n}^{-1})_T$ is submatrix of $\K_n^{-1}$ with rows and columns in $T$. We have
\begin{equation} \label{eq:34}
    \sqrt{n} \Psib_n^{-1/2}(\A\betahat-\b)=\Psib_n^{-1/2}(\omegab_n+\A\RR_{a,T}+\sqrt{n}\h_n).
\end{equation}
Next, we bound $\ltwonorm{\sqrt{n} \Psib_n^{-1/2}(\A\betahat-\b)}$. By Lemmas \ref{lemma:C3} and \ref{lemma:B4}, $\ltwonorm{\Psib_n^{-1/2}\A}=O_P\left(1 \right)$, and $\ltwonorm{\RR_{a,T}}=o_P\left(1 \right)$. Then it follows that
\begin{equation} \label{eq:35}
    \ltwonorm{\Psib_n^{-1/2}\A\RR_{a,T}} \leq \ltwonorm{\Psib_n^{-1/2}\A}\ltwonorm{\RR_{a,T}}=o_P\left(1 \right). 
\end{equation}
Therefore, $\sqrt{n}\Psib_n^{-1/2}(\A\betahat-\b)=\Psib_n^{-1/2}(\omegab_n+\sqrt{n}\h_n)+o_P\left(1 \right)$. We further note that,
\begin{equation*}
\E \ltwonorm{\Psib_n^{-1/2}\omegab_n}^2=\tr\left[ \E_{\u}\{\Psib_n^{-1/2}\E_{\epsilon}(\omegab_n\omegab_n')\Psib_n^{-1/2}\} \right] = r\sigmae^2. 
\end{equation*}
Then, by Markov's inequality, $\ltwonorm{\Psib_n^{-1/2}\omegab_n}=O_P\left(\sqrt{r} \right)$. By Lemma \ref{lemma:C3}, $\lambda_{\max}(\Psib_n^{-1})= O_P\left(1 \right)$. By Condition \ref{con:10}, $\ltwonorm{\h_n}=O(\sqrt{r/n})$, so that $\ltwonorm{\sqrt{n}\Psib_n^{-1/2}\h_n}=O_P\left(\sqrt{r} \right)$. Therefore, $\ltwonorm{\sqrt{n}\Psib_n^{-1/2}(\A\betahat-\b)}=O_P\left(\sqrt{r} \right)$. By Lemma \ref{lemma:C3}, $\ltwonorm{\Psib_n^{1/2}(\A\Omegabh_T\A')^{-1}\Psib_n^{1/2}-\I}=O_P\left((s_a+t)/\sqrt{n} \right)$, where $\Omegabh_T$ is the first $T$ rows and columns of the matrix
\begin{equation*}
    \Omegabh_{T\cup\Sha} = n
    \begin{pmatrix}
      \Uht'\Uht & \Uht'\Uh_{\Sha} \\ \Uh_{\Sha}'\Uht & \Uh_{\Sha}'\Uh _{\Sha }
    \end{pmatrix}^{-1}.
\end{equation*}
Therefore, under the assumption that $s_a+t=o(n^{1/2})$, we have 
\begin{eqnarray} \label{eq:36}
    & & \ltwonorm{\{\sqrt{n}\Psib_n^{-1/2}(\A\betahat-\b) \}' \{\Psib_n^{1/2}(\A\Omegabh_T\A')^{-1}\Psib_n^{1/2}-\I \} \{\sqrt{n}\Psib_n^{-1/2}(\A\betahat-\b) \}}^2  \nonumber \\
    & \leq & \ltwonorm{\Psib_n^{1/2}(\A\Omegabh_T\A')^{-1}\Psib_n-\I} \ltwonorm{\sqrt{n}\Psib_n^{-1/2}(\A\betahat-\b)}^2 \nonumber \\
    & = & O_P\left(\frac{r(s_a+t)}{\sqrt{n}} \right)=o_P\left(r \right).
\end{eqnarray}
Let $T_{w,0}=\sigmaeh^{-2}n(\A\betahat-\b)'\Psib^{-1}_n(\A\betahat-\b)$. By $T_w$'s definition and (\ref{eq:36}), we have $\sigmaeh^2|T_w-T_{w,0}|=o_P\left(r \right)$. Condition \ref{con:5} implies $1/\sigmaeh^2=O_P(1)$. Therefore, $|T_w-T_{w,0}|=o_P\left(r \right)$. 

Next, we show that $|T_{w,0}-T_0|=o_P\left(r \right)$. Let $T_{w,1}=\sigmaeh^{-2}\ltwonorm{\Psib^{-1/2}\omegab_n+\sqrt{n} \Psib^{-1/2}\h_n}$. By (\ref{eq:34}) and (\ref{eq:35}), we have
\begin{eqnarray*}
    \sigmaeh^2 T_{w,0}
    & = & \ltwonorm{\Psib_n^{-1/2}\omegab_n+\sqrt{n}\Psib_n^{-1/2}\h_n+o_P\left(1 \right)}^2 \\
    & = & \ltwonorm{\Psib_n^{-1/2}\omegab_n+\sqrt{n}\Psib_n^{-1/2}\h_n}^2 + o_P\left(1 \right) + o_P\left(\Psib_n^{-1/2}(\omegab_n+\sqrt{n}\h_n) \right) \\
    & = &\ltwonorm{\Psib_n^{-1/2}\omegab_n+\sqrt{n}\Psib_n^{-1/2}\h_n}^2 + o_P\left(1 \right) + o_P\left(r \right) \\
    & = & \sigmaeh^2T_{w,1}+ (\omegab_n+\sqrt{n}\h_n)'(\Psib_n^{-1}-\Psib^{-1})(\omegab_n+\sqrt{n}\h_n)+o_P\left(r \right).
\end{eqnarray*}
By Lemma \ref{lemma:C3}, we have that $\ltwonorm{\Psib_n^{-1}-\Psib^{-1}}=o_P(1)$. Since $\ltwonorm{\omegab_n}\leq \ltwonorm{\Psib_n^{1/2}}\ltwonorm{\Psib_n^{-1/2}\omegab_n}=O_P\left(\sqrt{r} \right)$, and by Condition \ref{con:10}, $\ltwonorm{\sqrt{n}\h_n}=O_P\left(\sqrt{r} \right)$. Therefore, $\ltwonorm{\omegab_n+\sqrt{n}\h_n}=O_P(\sqrt{r})$. Considering that $1/\sigmaeh^2=O(1)$, we have $T_{w,0}=T_{w,1}+o_P\left(r \right)$. Finally, we have
\begin{equation*}
    |T_{w,1}-T_0|=\frac{|\sigmaeh^2-\sigmae^2|}{\sigmaeh^2\sigmae^2} \ltwonorm{\Psib^{-1/2}\omegab_n+\sqrt{n} \Psib^{-1/2}\h_n} = o_P\left(r \right).
\end{equation*}
Therefore $|T_{w,0}-T_0|=o_P\left(r \right)$. 

Combining the results $|T_w-T_{w,0}|=o_P\left(r \right)$ and $|T_{w,0}-T_0|=o_P\left(r \right)$ completes Step 1.
  
In Step 2, we show that the $\chi^2$ approximation holds for $T_0$. Recall the definition of $T_0$, which can be written as $T_0 = \sigmae^{-2} \ltwonorm{\Psib^{-1/2}\omegab_n+\sqrt{n} \Psib^{-1/2}\h_n}^2$. By the definition of $\omegab_n$,  
\begin{equation*}
    \sigmae^{-1}\Psib^{-1/2}\omegab_n= \sum_{i=1}^n \frac{1}{\sqrt{n\sigmae^2}} \Psib^{-1/2}
    \begin{pmatrix}
      \A & \zero
    \end{pmatrix} \Omegab_{\ts}
    \U_{i,\ts}\epsilon_i = \sum_{i=1}^n \xib_i. 
\end{equation*}
By direct calculation, we have $\sum_{i=1}^n \var(\xib_i)=\I_r$. Because of the sub-Gaussian assumption on $\epsilon$ in Condition \ref{con:1}, we have $E|\epsilon|^3< \infty$. Then,
\begin{eqnarray*}
    r^{1/4} \sum_{i=1}^n \E \ltwonorm{\xib_i}^3 
    &=& \frac{r^{1/4}}{(n\sigmae^2)^{3/2}} \sum_{i=1}^n \E \ltwonorm{\Psib^{-1/2}
        \begin{pmatrix}
          \A & \zero
        \end{pmatrix} \Omegab_{\ts}\U_{i,\ts}\epsilon_i}^3\\
    &=& \frac{r^{1/4}}{(n\sigmae^2)^{3/2}} \E|\epsilon|^3\sum_{i=1}^n \E \ltwonorm{\Psib^{-1/2}
        \begin{pmatrix}
          \A & \zero
        \end{pmatrix} \Omegab_{\ts} \U_{i,\ts}}^3\\
    &\lesssim& \frac{r^{1/4}}{(n\sigmae^2)^{3/2}} \sum_{i=1}^n \E \{\ltwonorm{\Psib^{-1/2}
               \begin{pmatrix}
                 \A & \zero
               \end{pmatrix} \Omegab_{\ts}^{1/2}}^3 \ltwonorm{\Omegab_{\ts}^{1/2}\U_{i,\ts}}^3 \}\\
    & \lesssim & \frac{r^{1/4}}{(n\sigmae^2)^{3/2}} \sum_{i=1}^n \E |\U_{i,\ts}'\Omegab_{\ts}\U_{i,\ts}|^{3/2},\\
    & \lesssim & \frac{r^{1/4}}{n^{1/2}} \E |\u_{\ts}'\Omegab_{\ts}\u_{\ts}|^{3/2}=o(1), 
\end{eqnarray*}
where the third-to-last relation is due to the fact that
\begin{equation*}
    \ltwonorm{\Psib^{-1/2}
      \begin{pmatrix}
        \A & \zero
      \end{pmatrix} \Omegab_{\ts}^{1/2}}=\lambda_{\max} \left[\tr \left\{ \Psib^{-1/2}
        \begin{pmatrix}
          \A & \zero
        \end{pmatrix} \Omegab_{\ts}
        \begin{pmatrix}
          \A' \\ \zero'
        \end{pmatrix}
        \Psib^{-1/2}
      \right\} \right]=r,
  \end{equation*}
and the last equality follows from Condition \ref{con:11}. Then by Lemma \ref{lemma:C4}, we have
\begin{equation} \label{eq:37}
    \sup_{\mathcal{C}} \left|\pr\{\sigmae^{-1}\Psib^{-1/2}\omegab_n \in \mathcal{C} \}- \pr(\Z\in \mathcal{C})
    \right| \to 0, 
\end{equation}
where $\Z \sim N(\zero,\I_r)$ and the supremum is taken over all convex sets $\mathcal{C}\in \Rcal^r$.
  
Consider a special subset $\mathcal{C}_x$ of $\mathcal{C}$, where 
\begin{equation*}
    \mathcal{C}_x = \{\z\in \Rcal^r: \ltwonorm{\z+ \sqrt{n/\sigmae^2} \Psib^{-1/2}\h_n}^2\leq x \}. 
\end{equation*}
It follows from (\ref{eq:37}) that
\begin{equation*}
    \sup_x \left|\pr\{(n\sigmae^2)^{-1/2} \Psib^{-1/2}\omegab_n\in \mathcal{C}_x \}- \pr(\Z\in \mathcal{C}_x) \right| 
    = \sup_x \left|\pr(T_0\leq x) - \pr(\chi^2(r,\nu_n)\leq x)\right| \to 0,
\end{equation*}
where $\nu_n=n\sigmae^{-2}\h_n'\Psib^{-1}\h_n$.
  
Consider any statistic $T^{\ast}= T_0+o_P\left(r \right)$. For any $x$ and $\varepsilon>0$, we have
\begin{eqnarray*} \label{eq:38}
    \pr(\chi^2(r,\nu_n) & \leq & x-r\varepsilon)+o(1) \leq \pr(T_0\leq x - r\varepsilon) + o(1) \leq \pr(T^{\ast}\leq x) \\
                        & \leq & \pr(T_0\leq x+r\varepsilon)+o(1) \leq \pr\left\{ \chi^2(r,\nu_n)\leq x+r\varepsilon
                                 \right\} + o(1).  
\end{eqnarray*}
In addition, by Lemma \ref{lemma:C5}, we have 
\begin{equation*} \label{eq:39}
    \lim_{\varepsilon\to 0} \limsup_n \left|\pr\{\chi^2(r,\nu_n)\leq x+r\varepsilon \} - \pr\{\chi^2(r,\nu_n)\leq
      x-r\varepsilon \}\right| \to 0.
\end{equation*}
Together, we have 
\begin{equation*}
    \sup_x \left|\pr(T^{\ast}\leq x)-\pr(\chi^2(r,\nu_n)\leq x) \right| \to 0. 
\end{equation*}
This completes Step 2.

Combining the results of Step 1 and Step 2 completes the proof.
\end{proof}

\subsection{Consistence of $\hat{\sigma}_{\epsilon}^2$} \label{sec:cons-hats}
Recall the estimator for $\sigmae$ in Section \ref{sec:estimation}, $\sigmaeh^2=n^{-1} \sum_{i=1}^n (y_i-\x_i'\betabt)^2$, where $\betabt=\argmin_{\betab} (2n)^{-1}$ $\sum_{i=1}^n (y_i-\x_i'\betab)^2 + \lambda_{\epsilon} \lonenorm{\betab}$. The next proposition shows that $\sigmaeh^2$ is a consistent estimator of $\sigmae^2$, and thus $\sigmaeh^2$ satisfies Condition \ref{con:5}.

\begin{pro} \label{pro:2}
Suppose $\x_m $ satisfies the factor decomposition in \eqref{eq:1} for $m\in [M]$, and Conditions \ref{con:1} and \ref{con:3} hold. Suppose $s^{\ast}(\log p)/n = o(1)$, where $s^{\ast}=|\supp(\betabs)|$, and $\lambda_{\epsilon}=C \sqrt{(\log p)/n}$, where $C$ is a positive constant. Then $\sigmaeh^2=\sigmae^2+o_P\left(1 \right)$.
\end{pro}

\begin{proof}
Letting $\H_x=n^{-1} \sum_{i=1}^n \x_i^{\otimes 2}$ and $\Deltabh=\betabt-\betabs$, we have
\begin{equation}
    \label{eq:106}
    \sigmaeh^2-\sigmae^2= \frac{1}{n} \sum_{i=1}^n \epsilon_i^2 - \sigmae^2 + \Deltabh'\H_x\Deltabh - \Deltabh'
    \left(\frac{2}{n} \sum_{i=1}^n \epsilon_i\x_i \right).
\end{equation}
By the sub-Gaussian assumption on $\epsilon_i$ in Condition \ref{con:1}, it follows from the standard concentration result \citep[e.g.,][Lemma H.2]{ning2017general} that
\[
\frac{1}{n} \sum_{i=1}^n \epsilon_i^2 - \sigmae^2=O_P\left(\sqrt{(\log n)/n} \right). 
\]
Condition \ref{con:3} implies that $\lambda_{\min}( \E(\x_i^{\otimes 2})) \geq \lambda_{\min}( \E(\u_i^{\otimes 2}))>c$. Then, it follows from \citet[Proposition 1]{raskutti2011minimax} that the restricted eigenvalue condition holds for $\H_x$. Then, following a similar argument as in the proof of \citet[Lemma B.3]{ning2017general}, we have
\begin{equation} \label{eq:107}
    \Deltabh'\H_x\Deltabh=O_P\left(s^{\ast}(\log p)/n \right).
\end{equation}

Moreover, $\epsilon_ix_{ij}=\epsilon_i (\sum_{k=1}^K \lambda_{jk}f_{ik}+u_{ij})$, where $\lambda_{jk}$ is the $(j,k)$th element of $\Lambdab$. It follows from Conditions \ref{con:1} and \ref{con:3} that $x_{ij}$ is also sub-Gaussian. Therefore, $\epsilon_ix_{ij}$ is sub-exponential. This implies that $\supnorm{n^{-1} \sum_{i=1}^n \epsilon_i\x_{i}}=O_P(\sqrt{(\log p)/{n}} )$. By the well-known estimation error of the Lasso estimator \cite[e.g.,][Corollary 2]{negahban2012unified}, it holds that $\lonenorm{\Deltabh}=O_P\left(s^{\ast} \sqrt{(\log p)/n} \right)$. Therefore, 
\begin{equation}
\label{eq:108}
\left|(2/n)\Deltabh'\sum_{i=1}^n \epsilon_i\x_i\right| \leq \lonenorm{\Deltabh}\supnorm{(2/n) \sum_{i=1}^n \epsilon_i\x_i}
= O_P\left(s^{\ast}(\log p)/n \right).
\end{equation}
Putting (\ref{eq:106}), (\ref{eq:107}) and (\ref{eq:108}) together completes the proof.  
\end{proof}

\subsection{Proof of Proposition \ref{prop:1}}

\begin{proof}
To prove (a), consider regressing $y$ using all but the $m$th modality. Letting $\P_{-m}=\X_{-m}(\X_{-m}'\X_{-m})^{-1}\X'_{-m}$, then $\hat{\Y}_{-m}=\P_{-m}\Y$. Letting $\xib=\X_m\betabs_m+\epsilonb$, then
\begin{align*}
    \ltwonorm{\Y-\hat{\Y}_{-m}}^2 & =  \Y'(\I_n-\P_{-m})\Y = \xib'(\I_n-\P_{-m})\xib \\
                                  & =  \epsilonb'(\I_n-\P_{-m})\epsilonb + 2(\X_m\betabs_m)'(\I_n-\P_{-m})\epsilonb +
                                    (\X_m\betabs_m)'(\I_n-\P_{-m})\X_m\betabs_m. 
\end{align*}
Taking the expectation on both sides, and noting that, 
\begin{eqnarray*}
    \E\{\epsilonb'(\I_n-\P_{-m})\epsilonb \} & = & (n-p_{-m})\sigmae^2, \\
    \E\{2(\X_m\betabs_m)'(\I_n-\P_{-m})\epsilonb \} & = & 0, \\
    \E \{(\X_m\betabs_m)'(\I_n-\P_{-m})\X_m\betabs_m \} & = & \E_{\x_{-m}} [\E_{\x_m|\x_{-m}}\{(\X_m\betabs_m)'(\I_n-\P_{-m})\X_m\betabs_m \}] \\
 & = & \E_{\x_{-m}}[\tr\{(\I_n-\P_{-m})\sigma_{m|-m}^2 \}] = (n-p_{-m})\sigma_{m|-m}^2.
\end{eqnarray*}

To prove (b), note that $\E \ltwonorm{\Y-\hat{\Y}}^2=(n-p)\sigma_{\epsilon}^2$ when regressing $y$ on all data modalities. Then a direct calculation proves (b).

To prove (c), by factor decomposition, we have $\x_m=\Lambdab_m\f+\u_m$ and $\x_{-m}=\Lambdab_{-m}\f+\u_{-m}$. Then
\begin{equation*}
    \begin{pmatrix}
      \u_{-m}\\ \x_{-m}
    \end{pmatrix}
    \sim N \left(
      \begin{pmatrix}
        0 \\ 0
      \end{pmatrix},
      \begin{pmatrix}
        \Sigmaunom & \Sigmaunom \\ \Sigmaunom & \Sigmaxnom
      \end{pmatrix}
    \right),
\end{equation*}
where $\Sigmaxnom=\Lambdab_{-m}\Lambdab_{-m}'+\Sigmaunom$. Consequently,  
\begin{eqnarray*}
    \E(\u_{-m}|\x_{-m})=\Sigmaunom\Sigmaxnom^{-1}\x_{-m},  \quad
    \var(\u_{-m}|\x_{-m})=\Sigmaunom-\Sigmaunom\Sigmaxnom^{-1}\Sigmaunom. 
\end{eqnarray*}
As $\Lambdab_{-m}\f=\x_{-m}-\u_{-m}$, then $\f=\D_{-m}^{-1}\Lambdab_{-m}'(\x_{-m}-\u_{-m})$, where $\D_{-m}=\Lambdab_{-m}'\Lambdab_{-m}$. Then,
\begin{equation*}
    \x_m'\betabs_m=\f'\Lambdab_m'\betabs_m+\u_m'\betabs_m=(\x_{-m}-\u_{-m})'\C_m\betabs_m+\u_m'\betabs_m,
\end{equation*}
where $\C_m=\Lambdab_{-m}\D_{-m}^{-1}\Lambdab_m'$. Therefore, we have 
\begin{eqnarray} \label{eq:40}
    \E(\x_m'\betabs_m|\x_{-m}) =  \{\x_{-m}-\E(\u_{-m}|\x_{-m}) \}'\C_m\betabs_m = \x_{-m}'(\I-\Sigmaunom\Sigmaxnom^{-1})'\C_m\betabs_m.  
\end{eqnarray}
Moreover, 
\begin{eqnarray} \label{eq:41}
    \var(\x_m'\betabs_{-m}|\x_{-m}) & = & {\betabs_m}'\{\C_m'\var(\u_{-m}|\x_{-m})\C_m +\Sigmab_{u_m}\}\betabs_m \nonumber \\
                                    & = & {\betabs_m}'\{\C_m'(\Sigmaunom-\Sigmaunom\Sigmaxnom^{-1}\Sigmaunom)\C_m
                                          +\Sigmab_{u_m} \}\betabs_m.   
\end{eqnarray}
  
Then, by Woodbury matrix identity, we have 
\begin{equation*}
    \Sigmaxnom^{-1} = \Sigmaunom^{-1}-\Sigmaunom^{-1}\Lambdab_{-m}(\I_{K}+\Lambdab_{-m}'\Sigmaunom^{-1}\Lambdab_{-m})^{-1} \Lambdab_{-m}'\Sigmaunom^{-1}.  
\end{equation*}
Then we have, 
\begin{eqnarray*}
    \Sigmaunom-\Sigmaunom\Sigmaxnom^{-1}\Sigmaunom & = & \Lambdab_{-m}(\I_{K}+\Lambdab_{-m}'\Sigmaunom^{-1}\Lambdab_{-m})^{-1} \Lambdab_{-m}', \\
    \C_m'(\Sigmaunom-\Sigmaunom\Sigmaxnom^{-1}\Sigmaunom)\C_m & = & \Lambdab_m (\I_{K}+\Lambdab_{-m}'\Sigmaunom^{-1}\Lambdab_{-m})^{-1} \Lambdab_m'.
\end{eqnarray*}
Plugging these equalities into (\ref{eq:41}) gives
\begin{equation*}
    \var(\x_m'\betabs_{-m}|\x_{-m}) ={\betabs_m}'\{\Lambdab_m(\I_K+\Lambdab_{-m}'\Sigmaunom^{-1}\Lambdab_{-m})^{-1}\Lambdab_m'+\Sigmab_{u_m} \} \betabs_m,
\end{equation*}
which completes the proof of (c).
\end{proof}

\subsection{Additional technical lemmas}

\begin{lem} \label{lemma:C1} 
Suppose Conditions \ref{con:1}--\ref{con:3} hold. For any $m\in [M]$, there exists a nonsingular matrix $\H_m\in \Rcal^{K_m\times K_m}$, such that 
  \begin{enumerate}[(a)]
  \item $\max_{k\in [K_m]} (1/n) \sum_{i=1}^n |(\hat{\F}_m\H_m)_{ik} - f_{i,m_k}|^2=O_P\left(1/n+1/p_m \right)$, where $(\hat{\F}_m\H_m)_{ik}$ is the $(i,k)$th element of $\hat{\F}_m\H_m$, and $f_{i,m_k}$ is the $k$th element of $\f_{i,m}$.
  \item $\max_{i\in [n]} \ltwonorm{\hat{\f}_{i,m}-\H_m\f_{i,m}} = O_P\left(1/\sqrt{n}+n^{1/4}/\sqrt{p_m} \right)$.
  \item $\ltwonorm{\I_{K_m}-\H_m\H_m'}=O_P\left(1/\sqrt{n}+1/\sqrt{p_m} \right)$.
  \item $\max_{i\in [n],j\in [p_{m}]}|\hat{u}_{ij}-u_{ij}|=O_P\left(\sqrt{(\log n)/n}+n^{1/4}/\sqrt{p_m} \right)$.
  \end{enumerate}
\end{lem}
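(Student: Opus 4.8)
The plan is to follow the standard principal-components perturbation analysis of \citet{Bai2012} and \citet{Fan2013b}, adapted to the present notation in which $\X_m\in\Rcal^{n\times p_m}$ is treated as a panel with "time" dimension $n$ and "cross-section" dimension $p_m$, so that the governing rate for squared quantities is $1/\min(n,p_m)=1/n+1/p_m$. First I would fix the rotation. Let $\hat{\mathbf V}_m$ denote the diagonal matrix of the $K_m$ largest eigenvalues of $(np_m)^{-1}\X_m\X_m'$; by construction of $\Fh_m$ we have $(np_m)^{-1}\X_m\X_m'\Fh_m=\Fh_m\hat{\mathbf V}_m$ and $n^{-1}\Fh_m'\Fh_m=\I_{K_m}$. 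Set $\H_m=\hat{\mathbf V}_m^{-1}(n^{-1}\Fh_m'\F_m)(p_m^{-1}\Lambdab_m'\Lambdab_m)$. Under Condition \ref{con:2}, the $K_m$ largest eigenvalues of $(np_m)^{-1}\X_m\X_m'$ lie, with probability tending to $1$, in a fixed interval $[c',C']\subset(0,\infty)$ (they agree with those of $p_m^{-1}\Lambdab_m'\Lambdab_m$ up to $o_P(1)$, using $n^{-1}\F_m'\F_m\to\I_{K_m}$), so $\hat{\mathbf V}_m$ is invertible, $\H_m$ is well defined, and the same computation shows the singular values of $\H_m$ are bounded away from $0$ and $\infty$; hence $\H_m$ is nonsingular on an event of probability tending to $1$.

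Next I would write down the standard expansion
\[ \fh_{i,m}-\H_m\f_{i,m}=\hat{\mathbf V}_m^{-1}\,\frac{1}{n}\sum_{j=1}^{n}\fh_{j,m}\bigl(\gamma_{ij}+\zeta_{ij}+\eta_{ij}+\xi_{ij}\bigr), \]
obtained by substituting $\X_m=\F_m\Lambdab_m'+\U_m$ into $(np_m)^{-1}\X_m\X_m'\Fh_m=\Fh_m\hat{\mathbf V}_m$ and peeling off the $\H_m\f_{i,m}$ term, where $\gamma_{ij}=p_m^{-1}\E(\u_{i,m}'\u_{j,m})$, $\zeta_{ij}=p_m^{-1}\{\u_{i,m}'\u_{j,m}-\E(\u_{i,m}'\u_{j,m})\}$, $\eta_{ij}=p_m^{-1}\f_{i,m}'\Lambdab_m'\u_{j,m}$, and $\xi_{ij}=p_m^{-1}\f_{j,m}'\Lambdab_m'\u_{i,m}$. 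For part (a) I would bound $n^{-1}\sum_i\ltwonorm{\fh_{i,m}-\H_m\f_{i,m}}^2$ term by term. Since the samples are i.i.d., $\gamma_{ij}=0$ for $i\neq j$ and $\gamma_{ii}=p_m^{-1}\E\ltwonorm{\u_{i,m}}^2=O(1)$, so the $\gamma$-contribution is $O_P(n^{-2})$; the $\zeta$-, $\eta$-, and $\xi$-contributions are each $O_P(n^{-1}+p_m^{-1})$ by the fourth-moment bounds of Condition \ref{con:3} on $p_m^{-1/2}\{\u_{i,m}'\u_{j,m}-\E(\cdot)\}$ and on $p_m^{-1/2}\Lambdab_m'\u_{i,m}$, together with $n^{-1}\sum_j\ltwonorm{\fh_{j,m}}^2=O_P(1)$ and $\ltwonorm{\hat{\mathbf V}_m^{-1}}=O_P(1)$. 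This gives (a); the maximum over $k\in[K_m]$ is harmless because $K_m$ is fixed.

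For part (b) I would instead carry the maximum over $i\in[n]$ inside each of the four terms. The $\gamma$- and $\zeta$-terms are $O_P(n^{-1})$ and $O_P(n^{-1/2})$ uniformly in $i$, and the $\eta$-term, after rewriting it as $\hat{\mathbf V}_m^{-1}\{p_m^{-1}(n^{-1}\sum_j\fh_{j,m}\u_{j,m}')\Lambdab_m\}\f_{i,m}$ and noting the bracketed matrix is $O_P((np_m)^{-1/2})$, is dominated since $\max_i\ltwonorm{\f_{i,m}}$ is of polynomial order only. The dominant piece is the $\xi$-term, $\hat{\mathbf V}_m^{-1}(n^{-1}\sum_j\fh_{j,m}\f_{j,m}')\,p_m^{-1}\Lambdab_m'\u_{i,m}$; writing $p_m^{-1}\Lambdab_m'\u_{i,m}=p_m^{-1/2}(p_m^{-1/2}\Lambdab_m'\u_{i,m})$ and applying the fourth-moment bound $\E\ltwonorm{p_m^{-1/2}\Lambdab_m'\u_{i,m}}^4\le C$ of Condition \ref{con:3} with a union bound over $i\in[n]$ yields $\max_i\ltwonorm{p_m^{-1/2}\Lambdab_m'\u_{i,m}}=O_P(n^{1/4})$, so this term is $O_P(n^{1/4}/\sqrt{p_m})$ uniformly. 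Collecting the four bounds proves (b). I expect this to be the main technical point of the lemma: it is precisely the fourth-moment hypothesis of Condition \ref{con:3}, rather than a cruder sub-Gaussian tail bound, that produces the rate $n^{1/4}/\sqrt{p_m}$ and no worse.

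Part (c) would follow by combining $n^{-1}\Fh_m'\Fh_m=\I_{K_m}$ with $n^{-1}\F_m'\F_m=\I_{K_m}+O_P(n^{-1/2})$ (law of large numbers plus the identifiability normalization $\E(\f_{i,m}^{\otimes2})=\I_{K_m}$) and part (a): substituting $\Fh_m=\F_m\H_m^{-1}+(\Fh_m-\F_m\H_m^{-1})$ into $n^{-1}\Fh_m'\Fh_m$ and using $n^{-1}\fnorm{\Fh_m\H_m-\F_m}^2=O_P(n^{-1}+p_m^{-1})$ from (a) gives $(\H_m\H_m')^{-1}=\I_{K_m}+O_P(n^{-1/2}+p_m^{-1/2})$, whence (c) by inverting. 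Finally, for part (d), I would write $\hat u_{ij}-u_{ij}=\f_{i,m}'\Lambdab_{m,j}-\fh_{i,m}'\hat{\Lambdab}_{m,j}$, where $\Lambdab_{m,j}$ and $\hat{\Lambdab}_{m,j}$ are the $j$th rows of $\Lambdab_m$ and $\Lambdabh_m=n^{-1}\X_m'\Fh_m$ (as $K_m$-vectors); substituting $\Lambdabh_m=\Lambdab_m(n^{-1}\F_m'\Fh_m)+n^{-1}\U_m'\Fh_m$ with $n^{-1}\F_m'\Fh_m=\H_m^{-1}+o_P(1)$ shows that the leading terms of $\fh_{i,m}'\hat{\Lambdab}_{m,j}$ reproduce $\f_{i,m}'\Lambdab_{m,j}=x_{ij}-u_{ij}$, and the remaining cross terms are controlled uniformly in $(i,j)$ by (b), (c), $\supnorm{\Lambdab_m}\le C$ (Condition \ref{con:3}), $\max_i\ltwonorm{\f_{i,m}}=O_P(\sqrt{\log n})$ (the sub-Gaussianity in Condition \ref{con:1}), and a Bernstein bound on $n^{-1}\sum_i u_{ij}\fh_{i,m}$ with a union bound over $j\in[p_m]$; the two surviving rates are $\sqrt{(\log n)/n}$ and $n^{1/4}/\sqrt{p_m}$, which is (d).
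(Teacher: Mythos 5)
Your proposal is correct in outline, but note that the paper itself does not prove this lemma from scratch: parts (a)--(b) are quoted from Lemma C.9 of \cite{Fan2013b} and parts (c)--(d) from Lemmas 3 and A.3 of \cite{li2018embracing}. What you wrote is essentially a reconstruction of the arguments behind those cited results --- the same eigen-identity $(np_m)^{-1}\X_m\X_m'\Fh_m=\Fh_m\hat{\mathbf V}_m$, the same rotation $\H_m$, and the same four-term decomposition into $\gamma,\zeta,\eta,\xi$ --- so the route is the standard one rather than a genuinely different one; the only difference is that the paper delegates all details to the references. Two bookkeeping points in your sketch deserve care. First, in (b) the $\zeta$-term is not $O_P(n^{-1/2})$ uniformly in $i$: with only the fourth-moment bound of Condition \ref{con:3}, Cauchy--Schwarz plus a union bound over $i$ gives $\max_{i}\bigl(n^{-1}\sum_j\zeta_{ij}^2\bigr)^{1/2}=O_P\bigl(n^{1/4}/\sqrt{p_m}\bigr)$, so this term is of the same order as your $\xi$-term; the final rate is unaffected, but attributing $n^{1/4}/\sqrt{p_m}$ solely to $\xi$ is inaccurate. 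Second, in (d) the Bernstein-plus-union bound over $j\in[p_m]$ for $n^{-1}\sum_i u_{ij}\fh_{i,m}$ produces a $\sqrt{(\log p_m)/n}$ factor, not $\sqrt{(\log n)/n}$, and the loading error $\max_j$ is multiplied by $\max_i\ltwonorm{\f_{i,m}}=O_P(\sqrt{\log n})$, so your sketch as written does not yet deliver the stated $\sqrt{(\log n)/n}+n^{1/4}/\sqrt{p_m}$ rate; closing this requires either tracking precisely how these pieces combine or importing the exact statement and conditions of Lemma A.3 of \cite{li2018embracing}, which is the detail the paper sidesteps by citation.
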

\begin{proof}
The results of (a) and (b) follow from Lemma C.9 of \cite{Fan2013b}. The results of (c) and (d) follow from Lemmas 3 and A.3 of \cite{li2018embracing}.
\end{proof}

\begin{lem} \label{lemma:C2}
Let $\F=(\F_1,\ldots,\F_M)$, $\Fh=(\Fh_1,\ldots,\Fh_M)$, where $\Fh_m$ is obtained by running PCA on the $m$th modality, $\H=\mathrm{diag}(\H_1,\ldots,\H_M)$, $\U=(\U_1,\ldots,\U_m)$, $\Uh=(\Uh_1,\ldots,\Uh_M)$, and $p_{\min}=\min_{m\in [M]} p_m$. Then the following results hold:
  \begin{enumerate}[(a)]
  \item $\max_{i\in [n]} \ltwonorm{\hat{\f}_{i}-\H\f_{i}} = O_P\left(1/\sqrt{n}+n^{1/4}/\sqrt{p_{\min}} \right)$.
  \item $\ltwonorm{\I_{K}-\H\H'}=O_P\left(1/\sqrt{n}+1/\sqrt{p_{\min}} \right)$.
  \item $\max_{i\in [n],j\in [p]}|\hat{u}_{ij}-u_{ij}|=O_P\left(\sqrt{(\log n)/n}+n^{1/4}/\sqrt{p_{\min}} \right)$.
  \end{enumerate}
\end{lem}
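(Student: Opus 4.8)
The plan is to obtain Lemma \ref{lemma:C2} as a direct bookkeeping corollary of the per-modality bounds in Lemma \ref{lemma:C1}, using two structural facts: the PCA step is performed separately on each $\X_m$, so the rotation matrices $\H_1,\dots,\H_M$ and the estimators $\hat{\F}_m,\hat{\U}_m$ appearing here are exactly the per-modality objects to which Lemma \ref{lemma:C1} applies; and $M$ is a fixed constant, so that summing or maximizing $M$ bounds each of stochastic order $O_P(r_n)$ again yields $O_P(r_n)$.

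For part (a), I would use the block structure $\H=\mathrm{diag}(\H_1,\dots,\H_M)$ and $\f_i=(\f_{i,1}',\dots,\f_{i,M}')'$ to write $\ltwonorm{\hat{\f}_i-\H\f_i}^2=\sum_{m=1}^M\ltwonorm{\hat{\f}_{i,m}-\H_m\f_{i,m}}^2$. Taking $\max_{i\in[n]}$ and bounding each summand by its own maximum over $i$, Lemma \ref{lemma:C1}(b) gives $\max_{i\in[n]}\ltwonorm{\hat{\f}_i-\H\f_i}^2 \le \sum_{m=1}^M O_P\big((1/\sqrt{n}+n^{1/4}/\sqrt{p_m})^2\big)$; since $(a+b)^2\le 2(a^2+b^2)$, $M$ is fixed, and $p_m\ge p_{\min}$, the right side is $O_P(1/n+\sqrt{n}/p_{\min})$, and taking square roots with $\sqrt{a+b}\le\sqrt{a}+\sqrt{b}$ yields the claimed rate. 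For part (b), $\I_{K}-\H\H'=\mathrm{diag}(\I_{K_1}-\H_1\H_1',\dots,\I_{K_M}-\H_M\H_M')$ is block diagonal and symmetric, so its operator norm equals the maximum of those of its diagonal blocks; Lemma \ref{lemma:C1}(c) then gives $\ltwonorm{\I_{K}-\H\H'}=\max_{m\in[M]}\ltwonorm{\I_{K_m}-\H_m\H_m'}=O_P(1/\sqrt{n}+1/\sqrt{p_{\min}})$. For part (c), I would write the index set $[p]$ as the disjoint union of the modality blocks $[p_m]$, so that $\max_{i\in[n],j\in[p]}|\hat{u}_{ij}-u_{ij}|=\max_{m\in[M]}\max_{i\in[n],j\in[p_m]}|\hat{u}_{ij}-u_{ij}|$; applying Lemma \ref{lemma:C1}(d) blockwise and maximizing over the fixed number $M$ of modalities gives $O_P(\sqrt{(\log n)/n}+n^{1/4}/\sqrt{p_{\min}})$.

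There is no genuinely hard step here — the lemma is purely a repackaging of Lemma \ref{lemma:C1} for later use in Theorems \ref{thm:2} and \ref{thm:3}. The only two points requiring a word of care are that the aggregation across modalities relies on $M$ being treated as a fixed constant throughout, so that $O_P(M r_n)=O_P(r_n)$ and $\max_{m\in[M]}O_P(r_n)=O_P(r_n)$; and that in part (b) one must identify the relevant norm as the spectral norm and invoke the fact that the eigenvalues of a block-diagonal symmetric matrix are the union of the blockwise eigenvalues, rather than attempting to add the blockwise bounds.
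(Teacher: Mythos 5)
Your proposal is correct and follows essentially the same route as the paper, which simply cites Lemma \ref{lemma:C1} and notes that the rate is governed by the slowest modality; your blockwise decomposition for (a), the spectral-norm-of-block-diagonal argument for (b), and the union over modality blocks for (c), with $M$ fixed, is just the explicit bookkeeping behind that one-line argument.
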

\begin{proof}
The results follow given Lemma \ref{lemma:C1} and the fact that the convergence rate depends on the slowest one among all $M$ modalities.
\end{proof}

\begin{lem} \label{lemma:C3}
Suppose the conditions of Theorem \ref{thm:3} hold. Then the following results hold:
\begin{enumerate}[(a)]
  \item $\ltwonorm{\Psib_n^{-1/2}\A}=O_P\left(1 \right)$.
  \item $\ltwonorm{\sqrt{n}\Psib_n^{-1/2}\h_n}=O_P\left(\sqrt{r} \right)$.
  \item $\ltwonorm{\Psib_n^{1/2}(\A\Omegabh_T\A')^{-1}\Psib_n^{1/2}-\I}=O_P\left((t+s_a)/\sqrt{n} \right)$.
  \item $\ltwonorm{\Psib_n^{-1}-\Psib^{-1}}=o_P\left(1 \right)$.
\end{enumerate}
\end{lem}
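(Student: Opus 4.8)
The plan is to reduce all four parts to perturbation estimates for the Gram‑type matrices $\K_n=n^{-1}\Uts'\Uts$ (sample, true idiosyncratic errors), its population limit $\Sigmab_{u,\ts}$, and $\hat{\K}_n=n^{-1}\Uhts'\Uhts$ (sample, estimated idiosyncratic errors), and then to transfer these to the $\A$‑projections in ``studentized'' form. On the event $\{\Sha=S_a\}$, which by Theorem~\ref{thm:2}(a) has probability tending to $1$, one has $\A\Omegabh_T\A'=\A(\hat{\K}_n^{-1})_T\A'$, while $\Psib_n=\A(\K_n^{-1})_T\A'$ and $\Psib=\A\Omegab_T\A'=\A(\Sigmab_{u,\ts}^{-1})_T\A'$, so the three matrices differ only through $\hat{\K}_n$ vs.\ $\K_n$ vs.\ $\Sigmab_{u,\ts}$. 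Two facts are used throughout. (i) By Condition~\ref{con:6}, $c\le\lambda_{\min}(\Sigmab_{u,\ts})\le\lambda_{\max}(\Sigmab_{u,\ts})\le C$, and since $t+s_a=o(n^{1/3})=o(n)$, sub‑Gaussianity of $\u$ (Condition~\ref{con:1}) gives $\ltwonorm{\K_n-\Sigmab_{u,\ts}}=O_P(\sqrt{(t+s_a)/n})=o_P(1)$; hence, w.h.p., $\K_n$, $\K_n^{-1}$ and the principal submatrix $(\K_n^{-1})_T$ all have eigenvalues bounded away from $0$ and $\infty$. (ii) For any symmetric positive‑definite $B$ of conforming size, the matrix $\A B\A'-\lambda_{\min}(B)\,\A\A'$ is positive semidefinite, so $\lambda_{\min}(\A B\A')\ge\lambda_{\min}(B)\,\lambda_{\min}(\A\A')\ge\lambda_{\min}(B)/C$ by Condition~\ref{con:10}, and $\ltwonorm{(\A B\A')^{-1/2}\A}^2=\lambda_{\max}\{(\A B\A')^{-1/2}\A\A'(\A B\A')^{-1/2}\}\le1/\lambda_{\min}(B)$. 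Applying (ii) with $B=(\K_n^{-1})_T$ yields $\ltwonorm{\Psib_n^{-1/2}\A}=O_P(1)$ and $\lambda_{\min}(\Psib_n)$ bounded below w.h.p.; with $B=\Omegab_T$ (a principal submatrix of $\Sigmab_{u,\ts}^{-1}$, so $\lambda_{\min}(\Omegab_T)\ge1/C$) it yields $\ltwonorm{\Psib^{-1/2}\A}=O_P(1)$ and $\lambda_{\min}(\Psib)\ge1/C^2$.

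Parts (a) and (b) are then essentially immediate: (a) is the bound $\ltwonorm{\Psib_n^{-1/2}\A}=O_P(1)$ just obtained; and $\ltwonorm{\sqrt n\,\Psib_n^{-1/2}\h_n}\le\sqrt n\,\lambda_{\min}(\Psib_n)^{-1/2}\ltwonorm{\h_n}=\sqrt n\cdot O_P(1)\cdot O(\sqrt{r/n})=O_P(\sqrt r)$, using $\ltwonorm{\h_n}=O(\sqrt{r/n})$ from Condition~\ref{con:10}, which is (b).

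For (c), write $\A\Omegabh_T\A'=\Psib_n+\A\{(\hat{\K}_n^{-1})_T-(\K_n^{-1})_T\}\A'$, so that $\Psib_n^{1/2}(\A\Omegabh_T\A')^{-1}\Psib_n^{1/2}=(\I+G_n)^{-1}$ with $G_n=\Psib_n^{-1/2}\A\{(\hat{\K}_n^{-1})_T-(\K_n^{-1})_T\}\A'\Psib_n^{-1/2}$, and $\ltwonorm{(\I+G_n)^{-1}-\I}\le\ltwonorm{G_n}/(1-\ltwonorm{G_n})$ once $\ltwonorm{G_n}=o_P(1)$. By part (a), $\ltwonorm{G_n}\le\ltwonorm{\Psib_n^{-1/2}\A}^2\ltwonorm{(\hat{\K}_n^{-1})_T-(\K_n^{-1})_T}=O_P(1)\,\ltwonorm{\hat{\K}_n^{-1}-\K_n^{-1}}$, and since $\K_n,\hat{\K}_n$ are both well conditioned w.h.p.\ (for $\hat{\K}_n$ this follows from $\ltwonorm{\hat{\K}_n-\K_n}=o_P(1)$, shown next), one has $\ltwonorm{\hat{\K}_n^{-1}-\K_n^{-1}}=O_P(\ltwonorm{\hat{\K}_n-\K_n})$. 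The remaining task is the estimate $\ltwonorm{\hat{\K}_n-\K_n}=O_P((t+s_a)/\sqrt n)$: bound the operator norm by the Frobenius norm, $\ltwonorm{\hat{\K}_n-\K_n}\le(t+s_a)\max_{j,k}|n^{-1}\sum_i(\hat{u}_{ij}\hat{u}_{ik}-u_{ij}u_{ik})|$; expand $\hat{u}_{ij}\hat{u}_{ik}-u_{ij}u_{ik}=u_{ij}(\hat{u}_{ik}-u_{ik})+(\hat{u}_{ij}-u_{ij})u_{ik}+(\hat{u}_{ij}-u_{ij})(\hat{u}_{ik}-u_{ik})$; and apply Lemma~\ref{lemma:C2}(c), whose bound $O_P(\sqrt{(\log n)/n}+n^{1/4}/\sqrt{p_{\min}})$ reduces to $O_P(n^{-1/2})$ (up to the logarithmic factor handled as elsewhere in the paper) because $p_{\min}\gg n^{3/2}$ forces $n^{1/4}/\sqrt{p_{\min}}=o(n^{-1/2})$, together with $n^{-1}\sum_i|u_{ij}|=O_P(1)$. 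This gives $\max_{j,k}|(\hat{\K}_n-\K_n)_{jk}|=O_P(n^{-1/2})$, hence $\ltwonorm{G_n}=O_P((t+s_a)/\sqrt n)=o_P(1)$ since $t+s_a=o(n^{1/3})$, completing (c). If a tighter constant is wanted one can instead control $\ltwonorm{\Uhts-\Uts}$ in operator norm via Lemma~\ref{lemma:C1}(a).

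For (d) the same device gives $\Psib^{1/2}\Psib_n^{-1}\Psib^{1/2}=(\I+H_n)^{-1}$ with $H_n=\Psib^{-1/2}\A\{(\K_n^{-1})_T-\Omegab_T\}\A'\Psib^{-1/2}$, and by the analogue of part (a) for $\Psib$ together with fact (i), $\ltwonorm{H_n}\le\ltwonorm{\Psib^{-1/2}\A}^2\ltwonorm{(\K_n^{-1})_T-\Omegab_T}=O_P(1)\,O_P(\ltwonorm{\K_n^{-1}-\Sigmab_{u,\ts}^{-1}})=O_P(\ltwonorm{\K_n-\Sigmab_{u,\ts}})=o_P(1)$; then $\ltwonorm{\Psib_n^{-1}-\Psib^{-1}}=\ltwonorm{\Psib^{-1/2}\{(\I+H_n)^{-1}-\I\}\Psib^{-1/2}}\le\lambda_{\min}(\Psib)^{-1}\ltwonorm{(\I+H_n)^{-1}-\I}=o_P(1)$. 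The main obstacle is part (c): it is the only point at which the factor‑estimation error enters, and pinning down $\ltwonorm{\hat{\K}_n-\K_n}$ at the rate $(t+s_a)/\sqrt n$ is precisely what forces the hypotheses $p_{\min}\gg n^{3/2}$ and $t+s_a=o(n^{1/3})$ of Theorem~\ref{thm:3}; the other three parts are routine once facts (i)--(ii) are in hand.
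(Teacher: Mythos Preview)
Your argument is correct in substance. The paper itself does not give an independent proof of (a)--(c); it simply cites the corresponding displays in \citet{shi2019linear}, and for (d) it uses the resolvent identity $\Psib_n^{-1}-\Psib^{-1}=\Psib_n^{-1}(\Psib-\Psib_n)\Psib^{-1}$ together with the crude bound $\ltwonorm{\K_n-\Sigmab_{u,\ts}}\le(t+s_a)\,\supnorm{\K_n-\Sigmab_{u,\ts}}$. Your ``studentized'' sandwiching $(\I+H_n)^{-1}$ for (d) is an equivalent repackaging of that identity, and your sharper operator-norm concentration $\ltwonorm{\K_n-\Sigmab_{u,\ts}}=O_P(\sqrt{(t+s_a)/n})$ is in fact tighter than what the paper uses. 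Your self-contained derivation of (a)--(b) via the inequality $\ltwonorm{(\A B\A')^{-1/2}\A}^2\le 1/\lambda_{\min}(B)$ and Condition~\ref{con:10} is clean and presumably mirrors what \citet{shi2019linear} do; the paper adds nothing here.

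One honest caveat on (c): the entrywise route through Lemma~\ref{lemma:C2}(c) yields $\max_{j,k}\bigl|(\hat{\K}_n-\K_n)_{jk}\bigr|=O_P\!\bigl(\sqrt{(\log n)/n}\bigr)$ under $p_{\min}\gg n^{3/2}$, so your bound on $\ltwonorm{G_n}$ is $O_P\!\bigl((t+s_a)\sqrt{(\log n)/n}\bigr)$ rather than the stated $O_P\!\bigl((t+s_a)/\sqrt{n}\bigr)$. You flag this yourself; it is harmless for the only place the lemma is used (display~(\ref{eq:36}) in the proof of Theorem~\ref{thm:3}, which needs merely $(t+s_a)/\sqrt{n}=o(1)$), but if you want the exact rate of the lemma statement you should follow your own hint and control $n^{-1}\sum_i u_{ij}(\hat u_{ik}-u_{ik})$ via Cauchy--Schwarz and the $L_2$-average bound of Lemma~\ref{lemma:C1}(a), which gives $O_P(1/\sqrt{n}+1/\sqrt{p_{\min}})$ without the logarithm, rather than the $L_\infty$ bound of Lemma~\ref{lemma:C2}(c).
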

\begin{proof} 
The results of (a)--(c) follow from (5.4), (5.5) and (5.6) of \cite{shi2019linear}.

For (d), we have $\Psib_n^{-1}-\Psib^{-1}=\Psib_n^{-1}(\Psib-\Psib_n)\Psib^{-1}$. Therefore, $\ltwonorm{\Psib_n^{-1}-\Psib^{-1}}\asymp \ltwonorm{\Psib-\Psib_n}$. Moreover,
\begin{eqnarray*}
    \ltwonorm{\Psib-\Psib_n} = \ltwonorm{\A\{\Omegab_T-(\K_n^{-1})_T \}\A'} \lesssim \ltwonorm{\Omegab_T-(\K_n^{-1})_T}
    \leq \ltwonorm{\Sigmab_{u,\ts}^{-1}-\K_n^{-1}},
\end{eqnarray*}
where $\Sigmab_{u,\ts}$ is the submatrix of $\Sigmab_u$ with rows and columns in $\ts$. By the sub-Gaussian assumption on $\u$, we have $\supnorm{\K_{n}-\Sigmab_{u,\ts}}= O_P\left(\sqrt{\{\log (t+s_a) \}/n} \right)$. Then,
\begin{equation*}
    \ltwonorm{\K_n-\Sigmab_{u,\ts}}\leq (t+s_a) \supnorm{\K_{n}-\Sigmab_{u,\ts}} =o_P\left(1 \right). 
\end{equation*}
Consequently, 
\begin{equation*}
    \ltwonorm{\Psib-\Psib_n} \lesssim \ltwonorm{\Sigmab_{u,\ts}^{-1}-\K_n^{-1}} \leq \ltwonorm{\K_n^{-1}}\ltwonorm{\Sigmab_{u,\ts}^{-1}} \ltwonorm{\K_n-\Sigmab_{u,\ts}}=o_P\left(1 \right).  
\end{equation*}
This completes the proof. 
\end{proof}

\begin{lem} \label{lemma:C4} Let $\{\X_i \}_{i=1}^n$ denote independent $p$-dimensional random vectors, with $\E(\X_i)=\zero$ and $\sum_{i=1}^n \var(\X_i)=\I_p$. Let $\Z$ denote a $p$-dimensional multivariate normal vector, with mean $\zero$ and covariance matrix $\I_p$. Then,
\begin{equation*}
\sup_C \left|P\left(\sum_{i=1}^n \X_i\in C\right)-P(\Z \in C) \right| \leq c_0p^{1/4} \sum_{i=1}^n \E( \ltwonorm{\X_i}^3),
\end{equation*}
for some constant $c_0$, where the supremum is taken over all convex subsets in $\Rcal^p$.
\end{lem}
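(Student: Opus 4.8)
The plan is to recognize the statement as a Lyapunov-type Berry--Esseen bound for the multivariate central limit theorem over the class of convex sets: writing $\beta_n:=\sum_{i=1}^n\E\ltwonorm{X_i}^3$, it asserts exactly that $\sup_C|\pr(\sum_i X_i\in C)-\pr(Z\in C)|\le c_0 p^{1/4}\beta_n$, which is the multivariate Lyapunov bound of Bentkus. At the level of this supplement the cleanest route is therefore to verify that our hypotheses match his --- the $X_i$ are independent, $\E X_i=\zero$, and $\sum_{i=1}^n\var(X_i)=\I_p$ --- and invoke that result with its absolute constant. For orientation I record the architecture of the proof, since the same pieces (anti-concentration, Lindeberg swapping) are used elsewhere in the paper.

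First I would pass from indicators of convex sets to smooth test functions. For a convex $C$ and a scale $\epsilon>0$ one builds $f_{C,\epsilon}\colon\Rcal^p\to[0,1]$ equal to $1$ on $C$, vanishing outside the outer $\epsilon$-neighbourhood $C^\epsilon$, with $\|\nabla^k f_{C,\epsilon}\|_\infty=O(\epsilon^{-k})$ for $k\le 3$ (mollify the indicator of an intermediate convex set by a fixed smooth bump of radius $\epsilon$; convexity of the inner and outer parallel bodies is what keeps the estimate two-sided). Then
\[
\pr\Bigl(\sum_{i=1}^n X_i\in C\Bigr)-\pr(Z\in C)\le\Bigl[\E f_{C,\epsilon}\Bigl(\sum_{i=1}^n X_i\Bigr)-\E f_{C,\epsilon}(Z)\Bigr]+\pr(Z\in C^\epsilon\setminus C),
\]
and the matching lower bound uses a function that is $1$ on the inner parallel set $C^{-\epsilon}$ and $0$ outside $C$.

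Second, the smooth-function discrepancy is handled by the Lindeberg replacement method: let $Y_i$ be independent Gaussians with $\E Y_i=\zero$ and $\var(Y_i)=\var(X_i)$, so that $Z$ has the same law as $\sum_i Y_i$; set $W_i=X_1+\cdots+X_{i-1}+Y_{i+1}+\cdots+Y_n$ and telescope. A second-order Taylor expansion of $f_{C,\epsilon}$ about $W_i$, using that $X_i$ and $Y_i$ have equal first and second moments and are independent of $W_i$, cancels everything through order two and leaves only the third-order remainder, so that
\[
\Bigl|\E f_{C,\epsilon}\Bigl(\sum_{i=1}^n X_i\Bigr)-\E f_{C,\epsilon}(Z)\Bigr|\le\tfrac16\|\nabla^3 f_{C,\epsilon}\|_\infty\sum_{i=1}^n\bigl(\E\ltwonorm{X_i}^3+\E\ltwonorm{Y_i}^3\bigr)=O\!\Bigl(\epsilon^{-3}\beta_n\Bigr),
\]
where $\E\ltwonorm{Y_i}^3\lesssim(\tr\var(X_i))^{3/2}=(\E\ltwonorm{X_i}^2)^{3/2}\le\E\ltwonorm{X_i}^3$ by Gaussian moment bounds and Lyapunov's inequality.

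Third, the Gaussian boundary probability is where $p^{1/4}$ enters: the Gaussian surface area of any convex body in $\Rcal^p$ is $O(p^{1/4})$ (a classical estimate going back to Ball), hence $\sup_{C}\pr(Z\in C^\epsilon\setminus C)=O(p^{1/4}\epsilon)$ for moderate $\epsilon$, and likewise for the inner layer. The genuine obstacle is the final optimization: a naive balancing of $\epsilon^{-3}\beta_n$ against $p^{1/4}\epsilon$ only gives a bound of order $p^{3/16}\beta_n^{1/4}$, which has the wrong dependence on $\beta_n$ ($\beta_n^{1/4}$ in place of $\beta_n$). Recovering the sharp form requires Bentkus's refinement: one keeps the $S$-side boundary probability $\pr(\sum_i X_i\in C^\epsilon\setminus C)$ in the smoothing inequality instead of passing immediately to the Gaussian, bounds it by the Gaussian boundary probability plus $\Delta_n:=\sup_C|\pr(\sum_i X_i\in C)-\pr(Z\in C)|$, and closes a self-improving (bootstrap) inequality for $\Delta_n$; iterating that recursion, fed by the Lindeberg and surface-area estimates above, yields the bound with an absolute $c_0$. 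This bootstrap step is the part I would cite from Bentkus rather than reprove.
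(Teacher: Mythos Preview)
Your proposal is correct and matches the paper's approach: the paper's proof of this lemma is simply a citation to Theorem~1 of \cite{bentkus2005lyapunov} (via Lemma~S6 of \cite{shi2019linear}), and you likewise verify that the hypotheses line up and invoke Bentkus's result. The additional sketch you give of the smoothing/Lindeberg/surface-area/bootstrap architecture is accurate and helpful context, but it is not a different route---both you and the paper ultimately defer the sharp $p^{1/4}\beta_n$ bound to Bentkus.
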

\begin{proof}
The result follows from Lemma S6 of \cite{shi2019linear}, which was originally given in Theorem 1 of \cite{bentkus2005lyapunov}.
\end{proof}

\begin{lem} \label{lemma:C5}
Let $\chi^2(r,\gamma)$ denote a $\chi^2$ random variable with $r$ degrees of freedom and the non-centrality parameter $\gamma$. Then,
\begin{equation*}
\lim_{\epsilon\to 0+} \sup_{r \geq 1, \gamma \geq 0} |P(\chi^2(r,\gamma) \leq x+r\varepsilon)-P(\chi^2(r,\gamma)\leq x-r\varepsilon)| \to 0. 
\end{equation*}
\end{lem}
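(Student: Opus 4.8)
The plan is to note that the expression inside the absolute value equals $P\{\chi^2(r,\gamma)\in(x-r\varepsilon,\,x+r\varepsilon]\}$, and to show this tends to $0$ as $\varepsilon\to0^+$ uniformly over $r\ge1$ and $\gamma\ge0$. The one real difficulty is that the length $2r\varepsilon$ of this interval grows with $r$, so no single density/anti-concentration bound can be uniform in $r$; I would therefore organize the argument as a dichotomy between bounded $r$ (where an anti-concentration bound applies) and large $r$ (where a left-tail bound applies, exploiting that $x$ is fixed while the bulk of $\chi^2(r,\gamma)$ sits near $r+\gamma$).

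First I would prove a uniform anti-concentration bound: for every $r\ge1$, every $\gamma\ge0$, and every interval $I$ of length $\ell$, $P\{\chi^2(r,\gamma)\in I\}\le\sqrt{2/\pi}\,\sqrt{\ell}$. To see this, use the representation $\chi^2(r,\gamma)\stackrel{d}{=}W+V$ where $W=(Z+\sqrt\gamma)^2\sim\chi^2(1,\gamma)$ with $Z\sim N(0,1)$, and $V\sim\chi^2(r-1)$ is independent of $W$. Conditioning on $V$ reduces the claim to bounding $P\{W\in J\}$ for an interval $J$ of length $\ell$; and for $0\le a\le b$, $P\{W\in[a,b]\}=P\{|Z+\sqrt\gamma|\in[\sqrt a,\sqrt b]\}\le\tfrac{2}{\sqrt{2\pi}}(\sqrt b-\sqrt a)\le\sqrt{2/\pi}\,\sqrt{b-a}$, using $\sqrt b-\sqrt a\le\sqrt{b-a}$ and the bound $(2\pi)^{-1/2}$ on the standard normal density (the case where the interval dips below $0$ only makes the probability smaller). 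Next I would record a left-tail bound: since $\chi^2(r,\gamma)$ stochastically dominates $\chi^2(r)$, for $\varepsilon\in(0,1/2)$ and $r>4(|x|+1)$ we have $x+r\varepsilon<x+r/2$ and $x-r/2<-r/4$, so Chebyshev's inequality gives $P\{\chi^2(r,\gamma)\le x+r\varepsilon\}\le P\{|\chi^2(r)-r|\ge r/4\}\le 2r/(r/4)^2=32/r$.

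Finally I would combine the two. Given $\delta>0$, pick $R=R(x,\delta)$ with $R>4(|x|+1)$ and $32/R<\delta$; then for all $r>R$, all $\gamma\ge0$ and all $\varepsilon\in(0,1/2)$, the quantity is at most $P\{\chi^2(r,\gamma)\le x+r\varepsilon\}<\delta$. For $1\le r\le R$, the interval $(x-r\varepsilon,x+r\varepsilon]$ has length at most $2R\varepsilon$, so the anti-concentration bound gives $P\{\chi^2(r,\gamma)\in(x-r\varepsilon,x+r\varepsilon]\}\le\sqrt{4R\varepsilon/\pi}<\delta$ once $\varepsilon<\min\{1/2,\ \pi\delta^2/(4R)\}$. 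Hence the supremum over $r\ge1,\gamma\ge0$ is below $\delta$ for all sufficiently small $\varepsilon$, and since $\delta$ is arbitrary the limit is $0$. I expect the only subtle point to be the bookkeeping that lets the anti-concentration bound see only a bounded interval length, which is precisely why the large-$r$ tail estimate must be peeled off first.
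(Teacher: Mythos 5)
Your proof is correct, and it takes a genuinely different route from the paper: the paper disposes of this lemma by a one-line citation to Lemma S7 of Shi et al.\ (2019), whereas you supply a self-contained elementary argument. Your two ingredients are sound: (i) the anti-concentration bound $P\{\chi^2(r,\gamma)\in I\}\le\sqrt{2/\pi}\,\sqrt{|I|}$, proved uniformly in $\gamma\ge 0$ and $r\ge 1$ via the decomposition $\chi^2(r,\gamma)\stackrel{d}{=}\chi^2(1,\gamma)+\chi^2(r-1)$, conditioning on the central part, and the bound $(2\pi)^{-1/2}$ on the normal density together with $\sqrt{b}-\sqrt{a}\le\sqrt{b-a}$; and (ii) the large-$r$ left-tail estimate $P\{\chi^2(r,\gamma)\le x+r\varepsilon\}\le 32/r$ for $r>4(|x|+1)$, $\varepsilon<1/2$, using stochastic monotonicity of $\chi^2(r,\gamma)$ in $\gamma$ and Chebyshev with $\mathrm{Var}(\chi^2(r))=2r$. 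The dichotomy is exactly the right bookkeeping, since the interval length $2r\varepsilon$ grows with $r$ and no density bound alone can be uniform in $r$; indeed your cutoff $R=R(x,\delta)$ necessarily depends on $x$, which correctly reflects that the lemma holds pointwise in $x$ (uniformity over all $x$ would fail, e.g.\ near $x\approx r$ the mass of $\chi^2(r)$ in an interval of length $2r\varepsilon$ tends to one). What the paper's approach buys is brevity, at the cost of deferring to an external reference; what yours buys is a self-contained proof with explicit constants, valid uniformly over all noncentrality parameters, that also makes transparent why the statement is formulated for a fixed $x$ with the $r\varepsilon$-scaling of the window.
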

\begin{proof}
The result follows from Lemma S7 of \cite{shi2019linear}.
\end{proof}

\begin{lem} \label{lemma:B1}
Suppose Conditions \ref{con:1}--\ref{con:3} hold, and $\lambda_1\asymp \sqrt{(\log p_{-m})/n}+1/\sqrt{p_m}$. Then,  
\begin{eqnarray*}
    \lonenorm{\betahnom-\betasnom}=O_P\left(\ss_{-m} \left\{\sqrt{(\log p_{-m})/n}+1/\sqrt{p_m} \right\} \right).
\end{eqnarray*}
\end{lem}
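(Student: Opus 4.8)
The plan is to recognize that $\betahnom$ from \eqref{eq:10} is a partially penalized least--squares estimator (only $\betab_{-m}$ is penalized, the factor coefficient $\thetab_m$ is not), reduce it to an ordinary Lasso by profiling out $\thetab_m$, run the standard oracle--inequality argument on the residualized design, and absorb the factor--estimation error into the size of $\lambda_1$. First I would eliminate $\thetab_m$ analytically: since $n^{-1}\Fh_m'\Fh_m=\I_{K_m}$, for each fixed $\betab_{-m}$ the inner minimization over $\thetab_m$ gives $\hat{\thetab}_m(\betab_{-m})=n^{-1}\Fh_m'(\Y-\X_{-m}\betab_{-m})$, so \eqref{eq:10} is equivalent to
\begin{equation*}
  \betahnom=\argmin_{\betab_{-m}} \frac{1}{2n}\ltwonorm{(\I_n-\hat{\P}_m)(\Y-\X_{-m}\betab_{-m})}^2+\lambda_1\lonenorm{\betab_{-m}},\qquad \hat{\P}_m=n^{-1}\Fh_m\Fh_m'.
\end{equation*}
Write $\tilde{\X}_{-m}=(\I_n-\hat{\P}_m)\X_{-m}$, $\Deltabh=\betahnom-\betasnom$, and $S=\supp(\betasnom)$ with $|S|=\ss_{-m}$. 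Under $H_0$ one has $\Y-\X_{-m}\betasnom=\epsilonb$ exactly, so $(\I_n-\hat{\P}_m)(\Y-\X_{-m}\betasnom)=(\I_n-\hat{\P}_m)\epsilonb$ (under the local alternative of Theorem \ref{thm:4} the extra residual pieces $\F_m\thetasm+\U_m\betasm$ are $o_P(1)$--small and are absorbed the same way). The Lasso basic inequality then gives
\begin{equation*}
  \frac{1}{n}\ltwonorm{\tilde{\X}_{-m}\Deltabh}^2 \le 2\ssupnorm{\frac{1}{n}\X_{-m}'(\I_n-\hat{\P}_m)\epsilonb}\lonenorm{\Deltabh}+2\lambda_1\bigl(\lonenorm{\Deltabh_S}-\lonenorm{\Deltabh_{S^c}}\bigr),
\end{equation*}
so it suffices to (i) bound the score $\ssupnorm{n^{-1}\X_{-m}'(\I_n-\hat{\P}_m)\epsilonb}$ by $O_P(\lambda_1)$, and (ii) establish a restricted--eigenvalue lower bound for $\tilde{\X}_{-m}$ on the cone $\{\Deltab:\lonenorm{\Deltab_{S^c}}\le 3\lonenorm{\Deltab_S}\}$.

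For step (i), split $n^{-1}\X_{-m}'(\I_n-\hat{\P}_m)\epsilonb = n^{-1}\X_{-m}'\epsilonb-(n^{-1}\X_{-m}'\Fh_m)(n^{-1}\Fh_m'\epsilonb)$. The first term is $O_P(\sqlogqn)$ by Bernstein's inequality ($x_{ij,-m}\epsilon_i$ is sub--exponential by Condition \ref{con:1}) and a union bound over $p_{-m}$ coordinates. In the second term, $n^{-1}\X_{-m}'\Fh_m$ has sup--norm $O_P(1)$: by Lemma \ref{lemma:C1}, $\fh_{i,m}$ is within $O_P(n^{-1/2}+n^{1/4}/\sqrt{p_m})$ of $\H_m\f_{i,m}$ uniformly in $i$ and $\E(\x_{i,-m}\f_{i,m}')$ is bounded entrywise; while $\ltwonorm{n^{-1}\Fh_m'\epsilonb}=O_P(n^{-1/2}+1/\sqrt{p_m})$, obtained by writing $\Fh_m=\F_m\H_m^{-1}+(\Fh_m-\F_m\H_m^{-1})$, using $n^{-1/2}\F_m'\epsilonb=O_P(1)$ (central limit theorem, $\F_m\perp\epsilonb$) together with $\fnorm{\Fh_m-\F_m\H_m^{-1}}=O_P(1+\sqrt{n/p_m})$ (the aggregated form of Lemma \ref{lemma:C1}(a)) and $\ltwonorm{\epsilonb}=O_P(\sqrt{n})$. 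Hence the score is $O_P(\sqlogqn+1/\sqrt{p_m})\asymp O_P(\lambda_1)$; under the local alternative the remaining residual pieces are handled the same way, using $\thetasm=\Lambdab_m'\betasm$ and that $\Fh_m$ approximately spans the column space of $\F_m$, so $(\I_n-\hat{\P}_m)\F_m\thetasm$ is $O_P(1/\sqrt{p_m})$--small.

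For step (ii), decompose $n^{-1}\ltwonorm{\tilde{\X}_{-m}\Deltab}^2=n^{-1}\ltwonorm{\X_{-m}\Deltab}^2-n^{-1}\ltwonorm{\hat{\P}_m\X_{-m}\Deltab}^2$. The first term enjoys a restricted--eigenvalue bound with constant bounded away from $0$ by the sub--Gaussian concentration argument of \citet{raskutti2011minimax} and the fact that $\E(\x_{i,-m}^{\otimes2})$ dominates the idiosyncratic covariance, whose smallest eigenvalue is bounded below by Conditions \ref{con:2}--\ref{con:3}; the subtracted term is negligible on the cone because $\hat{\P}_m$ is the projection onto a fixed $K_m$--dimensional space that differs from the population factor projection by $O_P(n^{-1/2}+1/\sqrt{p_m})$ in operator norm (Lemma \ref{lemma:C1}(b)--(c)) and $K_m$ is fixed. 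Together these give $n^{-1}\ltwonorm{\tilde{\X}_{-m}\Deltabh}^2\ge\kappa\ltwonorm{\Deltabh_S}^2$ with probability tending to one, for some $\kappa>0$. Feeding (i) and (ii) into the basic inequality in the usual way first yields $\lonenorm{\Deltabh_{S^c}}\le 3\lonenorm{\Deltabh_S}$, then $\ltwonorm{\Deltabh_S}=O_P(\sqrt{\ss_{-m}}\,\lambda_1)$, and hence $\lonenorm{\Deltabh}\le 4\lonenorm{\Deltabh_S}\le 4\sqrt{\ss_{-m}}\,\ltwonorm{\Deltabh_S}=O_P(\ss_{-m}\lambda_1)=O_P\bigl(\ss_{-m}\{\sqlogqn+1/\sqrt{p_m}\}\bigr)$, which is the claim.

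The main obstacle is the bookkeeping in step (i): one must show that the price of replacing $\fh_{i,m}$ by the rotated true factors is only $O_P(1/\sqrt{p_m})$, not the uniform per--observation rate $O_P(n^{1/4}/\sqrt{p_m})$ of Lemma \ref{lemma:C1}(b). The saving comes from the fact that every quantity entering the score (and the Gram matrix in step (ii)) is an \emph{average} over $i$, so what matters is the root--mean--square factor--estimation error, of order $n^{-1/2}+1/\sqrt{p_m}$, rather than the uniform error; carrying this out cleanly for each cross term, and verifying that the perturbation of $\hat{\P}_m$ does not spoil the restricted eigenvalue, is the delicate part, and is exactly where Lemma \ref{lemma:C1} does the heavy lifting.
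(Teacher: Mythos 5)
Your overall route is a legitimate variant of the paper's: you profile out the unpenalized $\thetab_m$ (using $n^{-1}\Fh_m'\Fh_m=\I_{K_m}$) so that \eqref{eq:10} becomes a Lasso with residualized design $(\I_n-\hat{\P}_m)\X_{-m}$, $\hat{\P}_m=n^{-1}\Fh_m\Fh_m'$, whereas the paper keeps the joint parameter $(\betab_{-m}',\thetab_m')'$, notes that replacing $\Fh_m$ by the rotated $\Fh_m\H_m$ makes \eqref{eq:10} a standard partially penalized $M$-estimation problem, bounds the same two score pieces (the exact term by Bernstein, the factor-estimation term by Cauchy--Schwarz with Lemma \ref{lemma:C1}(a), producing the extra $1/\sqrt{p_m}$), verifies a restricted-eigenvalue (RE) condition for the raw design via \citet{raskutti2011minimax}, and then cites \citet{negahban2012unified} and \citet{fan2016factor}. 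Your step (i) is correct and matches the paper's bookkeeping; your closing observation that one must use the mean-square factor-estimation rate of Lemma \ref{lemma:C1}(a) rather than the uniform per-observation rate of part (b) is exactly the right point.

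The genuine problem is in step (ii): the subtracted term $n^{-1}\ltwonorm{\hat{\P}_m\X_{-m}\Deltab}^2$ is \emph{not} negligible on the cone. Since $\hat{\P}_m$ is idempotent, this term equals $\ltwonorm{(n^{-1}\Fh_m'\X_{-m})\Deltab}^2$, and $n^{-1}\Fh_m'\X_{-m}$ converges, up to the rotation $\H_m$, to $\E(\f_{i,m}\x_{i,-m}')$, which is nonzero precisely in the regime this paper targets: correlation between $\x_{-m}$ and $\f_m$ is why $\W^{\ast}\neq\zero$ and why the decorrelated score is needed at all. For cone directions $\Deltab$ supported on coordinates whose variables load on factors correlated with $\f_m$, the projected-out quadratic form is of the same order as $n^{-1}\ltwonorm{\X_{-m}\Deltab}^2$, so the argument ``RE of $\X_{-m}$ minus a negligible correction'' does not deliver the lower bound; the closeness of $\hat{\P}_m$ to the population factor projection is beside the point. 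The RE conclusion you want is still true, but for a different reason: the population Gram of the residualized design is
$\E(\x_{-m}^{\otimes 2})-\E(\x_{-m}\f_m')\E(\f_m\x_{-m}')
=\Lambdab_{-m}\left\{\var(\f_{-m})-\E(\f_{-m}\f_m')\E(\f_m\f_{-m}')\right\}\Lambdab_{-m}'+\Sigmaunom\succeq \Sigmaunom$,
where the bracketed matrix is positive semi-definite by a Schur-complement argument (using $\var(\f_m)=\I_{K_m}$), so the profiled design remains well conditioned from below even though the subtraction is not small. You would then still need to transfer this population statement to an empirical RE bound for $(\I_n-\hat{\P}_m)\X_{-m}$, whose rows are neither independent nor cleanly sub-Gaussian because the estimated projection couples observations; this typically forces an extra sparsity-times-rate condition or a separate argument, and it is exactly the complication the paper sidesteps by staying with the joint design $(\X_{-m},\Fh_m\H_m)$ and checking RE on the observed covariates directly.
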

\begin{proof}
Recall that
\begin{equation} \label{eq:42}
    (\betabh_{-m},\thetabh_m)=\argmin_{(\betanom,\thetab_m)} \frac{1}{2n} \sum_{i=1}^n (y_i-\x_{i,-m}'\betanom-\hat{\f}'_{i,m}\thetab_m)^2 + \lambda_1 \lonenorm{\betab}.
\end{equation}
By Lemma \ref{lemma:C1}, there exists a nonsingular matrix $\H_m\in\Rcal^{K_m\times K_m}$ such that $\tilde{\F}_m=\hat{\F}_m\H_m$ is a consistent estimator of $\F_m$. We note that (\ref{eq:42}) is equivalent to 
\begin{equation} \label{eq:43}
    (\tilde{\betab}_{-m},\tilde{\thetab}_m)=\argmin_{(\betanom,\thetab_m)} \frac{1}{2n} \sum_{i=1}^n (y_i-\x_{i,-m}'\betanom-\tilde{\f}'_{i,m}\thetab_m)^2 + \lambda_1 \lonenorm{\betab},
\end{equation}
where $\tilde{\f}'_{i,m}$ is the $i$th row of $\tilde{\F}_m$. Then, solving (\ref{eq:43}) is equivalent as replacing $\tilde{\f}_{i,m}$ with $\f_{i,m}$, which becomes a standard $M$-estimation problem.

Let $\Q=(\X_{-m}, \F_m)$, $\hat{\Q}=(\X_{-m},\Fh_m)$, and $\tilde{\Q}=\Q\tilde{\H}$, where $\tilde{\H}=\text{diag}(\I_{p_{-m}},\H_m)\in \Rcal^{q_{-m}\times q_{-m}}$ is a block-diagonal matrix, and $q_{-m}=p_{-m}+K_m$. Let $\varthetab=(\betab'_{-m},\thetab'_m)'\in \Rcal^{q_{-m}}$, $\hat{\varthetab}=(\betabh'_{-m},\thetabh'_m)'\in \Rcal^{q_{-m}}$ denote the solution of (\ref{eq:42}), $\tilde{\varthetab}=\tilde{\H}^{-1}\hat{\varthetab}\in \Rcal^{q_{-m}}$ and $\tilde{\varthetab}^{\ast}=\tilde{\H}^{-1}(\betab_{-m}^{\ast'},\thetab_m^{\ast'})'\in \Rcal^{q_{-m}}$.  By direct calculation, we can verify that $\betabh_{-m}=\hat{\varthetab}_{[p_{-m}]}=\tilde{\varthetab}_{[p_{-m}]}$, where $\tilde{\varthetab}=(\tilde{\betab}'_{-m},\tilde{\thetab}'_m)'$  solves (\ref{eq:43}). Then, it follows that
\begin{equation*}
    \lonenorm{\betabh_{-m}-\betabs_{-m}}=\lonenorm{\tilde{\varthetab}_{[p_{-m}]}-\tilde{\varthetab}^{\ast}_{[p_{-m}]}} \leq 
    \lonenorm{\tilde{\varthetab}-\tilde{\varthetab}^{\ast}}. 
\end{equation*}
  
To bound $\lonenorm{\tilde{\varthetab}-\tilde{\varthetab}^{\ast}}$, we turn to bound $\supnorm{\nabla\ell(\varthetab)}$, and check the restricted eigenvalue condition on $\nabla^2\ell(\varthetab)$, where $\ell(\varthetab)=(2n)^{-1} \sum_{i=1}^n(y_i-\x_{i,-m}'\betanom-\tilde{\f}'_{i,m}\thetab_m)^2$. 

To bound $\supnorm{\nabla\ell(\varthetab)}$, we aim to show that
\begin{equation} \label{eq:44}
    \ssupnorm{\nabla\ell(\varthetab)}=\ssupnorm{\frac{1}{n} \sum_{i=1}^n \tilde{\Q}_i\epsilon_i} = O_P\left(
      \sqrt{\frac{\log p_{-m}}{n}}+\frac{1}{\sqrt{p_m}} \right), 
\end{equation}
where $\tilde{\Q}_i$ is the $i$th row of $\tilde{\Q}$. Indeed,
\begin{equation*}
    \ssupnorm{\frac{1}{n} \sum_{i=1}^n \tilde{\Q}_i\epsilon_i} \leq  \max_{j\in [q_{-m}]}
    \left| \frac{1}{n} \sum_{i=1}^n Q_{ij}\epsilon_i\right| + \max_{j\in [q_{-m}]} \left|\frac{1}{n}\sum_{i=1}^n
      (\tilde{Q}_{ij}-Q_{ij})\epsilon_i\right|. 
\end{equation*}
By Condition \ref{con:1} and Bernstein inequality,
\begin{equation*}
    P \left(\left|\frac{1}{n} \sum_{i=1}^n Q_{ij}\epsilon_i \right| > C \sqrt{\frac{\log q_{-m}}{n}} \right) \leq q_{-m}^{-2} \text{ for all } j\in [q_{-m}].
\end{equation*}
Then, by the union bound, 
\begin{equation} \label{eq:45}
    \max_{j\in [q_{-m}]} \left|\frac{1}{n} \sum_{i=1}^n Q_{ij}\epsilon_i\right|=O_P\left(\sqrt{(\log q_{-m})/n} \right) = O_P\left(\sqrt{(\log p_{-m})/n} \right),
\end{equation}
where the last equality is due to the fact that, since $K_m$ is fixed, $q_{-m}\asymp p_{-m}$. We choose to present the results using $p_{-m}$ in order to unify the presentation. Then by Cauchy-Schwatz inequality,
\begin{equation*}
    \max_{j\in [q_{-m}]} \left|\frac{1}{n}\sum_{i=1}^n (\tilde{Q}_{ij}-Q_{ij})\epsilon_i\right| \leq \max_{j\in [q_{-m}]} \left(\frac{1}{n} \sum_{i=1}^n (\tilde{Q}_{ij}-Q_{ij})^2 \right)^{1/2} \left(\frac{1}{n} \sum_{i=1}^n \epsilon_i^2 \right)^{1/2} 
\end{equation*}
If we use the $m$th modality to obtain $\fh_{i,m}$, it follows from Lemma \ref{lemma:C1} that  
\begin{eqnarray*}
    \max_{k\in [K_m]} (1/n) \sum_{i=1}^n |(\hat{\F}_m\H_m)_{ik} - f_{i,m_k}|^2=O_P\left(1/n+1/p_m \right),
\end{eqnarray*}
where $(\hat{\F}_m\H_m)_{ik}$ is the $(i,k)$th element of $\hat{\F}_m\H_m$ and $f_{i,m_k}$ is the $k$th element of $\f_{i,m}$. This implies that
\begin{equation*}
    \max_{j\in [q_{-m}]} \left(\frac{1}{n} \sum_{i=1}^n (\tilde{Q}_{ij}-Q_{ij})^2 \right)^{1/2}= O_P\left(1/\sqrt{n} + 1/\sqrt{q_m}\right)=O_P\left(1/\sqrt{n} + 1/\sqrt{p_m}\right).
\end{equation*}
Since $(n^{-1} \sum_{i=1}^n \epsilon_i^2)^{1/2}=O_P\left(1 \right)$, we have
\begin{equation} \label{eq:46} \max_{j\in [q_{-m}]} \left|\frac{1}{n}\sum_{i=1}^n
      (\tilde{Q}_{ij}-Q_{ij})\epsilon_i\right|=O_P\left(1/\sqrt{n}+1/\sqrt{p_m} \right).
\end{equation}
Combining (\ref{eq:45}) and (\ref{eq:46}) together proves (\ref{eq:44}).

To check the restricted eigenvalue condition, it follows from Condition \ref{con:3} and the factor decomposition (\ref{eq:1}) that $\lambda_{\min}(\E (\x_{-m}^{\otimes 2}))\geq \lambda_{\min}(\E (\u_{-m}^{\otimes 2}))>c$. In addition, the sub-Gaussian assumptions on $\f_{-m}$ and $\u_{-m}$ imply that $\x_{-m}$ is also sub-Gaussian. Since $\nabla_{\betab_{S_{-m}}\betab_{S_{-m}}}^2\ell(\varthetab)= n^{-1} \sum_{i=1}^n \x_{i,S_{-m}}^{\otimes 2}$, where $S_{-m}=\{j \in [p_{-m}]:\beta_j^{\ast}\neq 0 \}$. Then, it follows from Proposition 1 of \cite{raskutti2011minimax} that the restricted eigenvalue condition holds with high probability.
  
Given that both (\ref{eq:44}) and the restricted eigenvalue condition hold, the rest of the proof follows the standard arguments of the high-dimensional $M$-estimator \citep[Theorem 1]{negahban2012unified}. A relevant proof in the context of factor model can be found in Theorem 4.2 of \cite{fan2016factor}. This completes the proof. 
\end{proof}

\begin{lem} \label{lemma:B3} Suppose Conditions \ref{con:1}--\ref{con:3} hold. Then, 
\begin{equation*}
    \ssupnorm{\frac{1}{n} \sum_{i=1}^n \x_{i,-m}(\hat{f}_{i,m_k}-\x_{i,-m}'\wh_k)} = O_P\left( \sqrt{\frac{\log
          p_{-m}}{n}}\left(1\vee \frac{n^{1/4}}{\sqrt{p_m}}\right) \right). 
\end{equation*}
\end{lem}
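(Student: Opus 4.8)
The plan is to leverage the fact that $\hat\W$ is, by construction, a feasible point of the Dantzig-type program \eqref{eq:11}: its defining constraint asserts precisely that, for each $k\in[K_m]$, $\ssupnorm{\tfrac1n\sum_{i=1}^n\x_{i,-m}(\hat{f}_{i,m_k}-\x_{i,-m}'\wh_k)}\le\lambda_2$. Hence, on the event that \eqref{eq:11} admits a feasible matrix, the quantity in the statement is bounded \emph{deterministically} by $\lambda_2\asymp\sqrt{(\log p_{-m})/n}\{1\vee(n^{1/4}/\sqrt{p_m})\}$, which is exactly the claimed rate; on the complementary event, which I will show has vanishing probability, $\hat\W$ is set arbitrarily and the $O_P$ statement is unaffected. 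The entire content of the lemma therefore reduces to establishing that \eqref{eq:11} is feasible, with the stated $\lambda_2$, with probability tending to one.

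To prove feasibility I would take as candidate a rotated population projection coefficient $\tilde\W=\W^{\ast}\H_m'$, where $\H_m$ is the near-orthogonal rotation of Lemma~\ref{lemma:C1}; since $\hat{\f}_{i,m}\approx\H_m\f_{i,m}$, the $k$th column $\tilde\w_k$ is the linear-projection coefficient of $(\H_m\f_{i,m})_k$ onto $\x_{i,-m}$. Decompose the constraint residual at $\tilde\W$ as $\tfrac1n\sum_i\x_{i,-m}(\hat{f}_{i,m_k}-\x_{i,-m}'\tilde\w_k)=A_k+B_k$, where $A_k=\tfrac1n\sum_i\x_{i,-m}\{(\H_m\f_{i,m})_k-\x_{i,-m}'\tilde\w_k\}$ and $B_k=\tfrac1n\sum_i\x_{i,-m}\{\hat{f}_{i,m_k}-(\H_m\f_{i,m})_k\}$. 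A short computation gives $A_k=\sum_h H_{m,kh}\cdot\tfrac1n\sum_i\x_{i,-m}(f_{i,m_h}-\x_{i,-m}'\ws_h)$; using $\ssupnorm{A_k}\le\|\H_{m,k\cdot}\|_1\,\max_{h\in[K_m]}\ssupnorm{\tfrac1n\sum_i\x_{i,-m}(f_{i,m_h}-\x_{i,-m}'\ws_h)}$ to decouple the randomness of $\H_m$, and noting that $\|\H_{m,k\cdot}\|_1=O_P(1)$ while each inner average has mean zero by the very definition $\ws_h=\E(\x_{i,-m}^{\otimes 2})^{-1}\E(\x_{i,-m}f_{i,m_h})$ and has sub-exponential coordinates under Condition~\ref{con:1}, Bernstein's inequality together with a union bound over $j\in[p_{-m}]$ yields $\ssupnorm{A_k}=O_P(\sqrt{(\log p_{-m})/n})$, comfortably inside $\lambda_2$.

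The term $B_k$ is the crux, and the sole origin of the extra factor $n^{1/4}/\sqrt{p_m}$. The naive Cauchy--Schwarz bound $|(B_k)_j|\le\big(\tfrac1n\sum_i(\x_{i,-m})_j^2\big)^{1/2}\big(\tfrac1n\sum_i(\hat{f}_{i,m_k}-(\H_m\f_{i,m})_k)^2\big)^{1/2}$, which by Lemma~\ref{lemma:C1} is $O_P(1/\sqrt n+1/\sqrt{p_m})$ uniformly over $j\in[p_{-m}]$, is \emph{not} sharp enough, because $1/\sqrt{p_m}$ need not be dominated by $\sqrt{(\log p_{-m})/n}\,n^{1/4}/\sqrt{p_m}$. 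Instead I would substitute the Bai-type asymptotic expansion of $\hat{\f}_{i,m}-\H_m\f_{i,m}$ as $\mathbf{V}_m^{-1}\tfrac1n\sum_{\ell=1}^n\hat{\f}_{\ell,m}\,p_m^{-1}\{\f_{\ell,m}'\Lambdab_m'\u_{i,m}+\u_{\ell,m}'\Lambdab_m\f_{i,m}+(\u_{\ell,m}'\u_{i,m}-\E(\u_{\ell,m}'\u_{i,m}))\}$ plus negligible remainders, with $\mathbf{V}_m$ the diagonal matrix of the $K_m$ leading eigenvalues of $\X_m\X_m'/(np_m)$, and bound, coordinate-wise, each of the resulting double averages over $(i,\ell)$. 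The structural point is that $\x_{i,-m}$ is uncorrelated with $\u_{i,m}$ and with $\f_{i,m}$ (Conditions~\ref{con:1} and~\ref{con:3}), so that after symmetrizing in $(i,\ell)$ these averages are effectively mean zero; combining sub-Gaussian and sub-exponential tail bounds with the fourth-moment control of Condition~\ref{con:3} on $p_m^{-1/2}\Lambdab_m'\u_{i,m}$ and $p_m^{-1/2}(\u_{i,m}'\u_{j,m}-\E(\u_{i,m}'\u_{j,m}))$, and carrying $\max_{j\in[p_{-m}]}$ through a union bound, each piece is $O_P\big(p_m^{-1/2}\{\sqrt{(\log p_{-m})/n}\vee n^{-1/4}\}\big)$. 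Summing the $K_m$ (fixed number of) pieces yields $\ssupnorm{B_k}=O_P(\sqrt{(\log p_{-m})/n}\,n^{1/4}/\sqrt{p_m})$, hence $\ssupnorm{A_k+B_k}=O_P(\lambda_2)$; since each of these concentration bounds comes with an explicit high-probability form, choosing the constant in $\lambda_2$ to match makes $\tilde\W$ feasible with probability tending to one, completing the proof. This last analysis is by far the heaviest step and runs parallel to the eigenvector-perturbation bounds of \citet{Fan2013b} and \citet{li2018embracing}, whereas the reduction and the bound on $A_k$ are routine.
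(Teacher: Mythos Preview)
Your overall reduction is exactly right and matches the paper: the lemma amounts to showing that the Dantzig program \eqref{eq:11} is feasible with the stated $\lambda_2$, and you check this by exhibiting a candidate and bounding the constraint residual. Your treatment of the ``population'' piece $A_k$ via Bernstein plus a union bound is also the same as the paper's (the paper takes $\ws_k$ itself as candidate rather than your rotated $\tilde\w_k=(\W^{\ast}\H_m')_k$, but the two are interchangeable up to absorbing the bounded matrix $\H_m$).

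Where you diverge substantially is in handling the factor-estimation piece $B_k=\tfrac1n\sum_i\x_{i,-m}\{\hat f_{i,m_k}-(\H_m\f_{i,m})_k\}$. You correctly note that Cauchy--Schwarz with the $L^2$ factor error from Lemma~\ref{lemma:C1}(a) is too crude, but you then launch into a Bai-type asymptotic expansion of $\hat\f_{i,m}-\H_m\f_{i,m}$ and a delicate analysis of double sums. The paper avoids all of this with a one-line H\"older bound:
\[
\Big|\,(B_k)_j\,\Big|\ \le\ \Big(\max_{i\le n}\big|\hat f_{i,m_k}-(\H_m\f_{i,m})_k\big|\Big)\cdot\Big|\tfrac1n\sum_{i=1}^n(\x_{i,-m})_j\Big|.
\]
The first factor is $O_P(n^{-1/2}+n^{1/4}/\sqrt{p_m})$ by the \emph{sup-norm} bound Lemma~\ref{lemma:C1}(b), not the mean-square bound you used; the second factor is $O_P(\sqrt{(\log p_{-m})/n})$ uniformly in $j$ because $\E\x_{i,-m}=\zero$ under Condition~\ref{con:1}. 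Their product is exactly the required rate, with no expansion needed. The trick you missed is that $\x_{i,-m}$ is centered, so one can trade the $L^2$ factor error for the $L^\infty$ one and gain a free $\sqrt{(\log p_{-m})/n}$ from the empirical mean of $\x_{i,-m}$.

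Your expansion route is plausible but considerably heavier, and one of its stated ingredients is not guaranteed by Conditions~\ref{con:1}--\ref{con:3}: you assert that $\x_{i,-m}$ is uncorrelated with $\u_{i,m}$, but $\x_{i,-m}$ contains $\u_{i,-m}$, and nothing in those conditions forces the off-diagonal blocks of $\Sigmab_u$ to vanish (only $\lonenorm{\Sigmab_{u_m}}$ is controlled, within each modality). So the ``mean-zero after symmetrizing'' claim would need additional work or a weaker form. In short, your proof sketch would probably close, but the paper's argument is both simpler and cleaner; replacing your $B_k$ analysis with the H\"older bound above is the recommended fix.
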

\begin{proof}
By definition, it suffices to show that, if we choose $\lambda_2=C \sqrt{(\log p_{-m})/n}\{1\vee (n^{1/4}/\sqrt{p_m}) \}$ for some large enough constant $C$, $\ws_k$ is in the feasible set with high probability; i.e.,
\begin{equation} \label{eq:47}
    \frac{1}{n} \ssupnorm{\sum_{i=1}^n \x_{i,-m}(\hat{f}_{i,m_k}-\x'_{i,-m}\ws_k)} \leq C \sqrt{(\log p_{-m})/n}\{1\vee (n^{1/4}/\sqrt{p_m})  \}. 
\end{equation}
We have 
\begin{equation*}
\frac{1}{n} \sum_{i=1}^n \x_{i,-m}(\hat{f}_{i,m_k}-\x_{i,-m}'\ws_k) = \frac{1}{n} \sum_{i=1}^n \x_{i,-m}(\hat{f}_{i,m_k}-f^{\dagger}_{i,m_k}) + \frac{1}{n} \sum_{i=1}^n \x_{i,-m}(f^{\dagger}_{i,m_k}-\x_{i,-m}'\ws_k), 
\end{equation*}
where $\f^{\dagger}_{i,m}=\H_m\f_{i,m}$ for some non-singular matrix $\H_m\in \Rcal^{K_m\times K_m}$, and $f^{\dagger}_{i,m_k}$ is the $k$th element of $\f^{\dagger}_{i,m}$. The sub-Gaussian assumption in Condition \ref{con:1} implies that $X_{ij}f^{\dagger}_{i,m_k}$ and $X_{ij}\x'_{i,-m}\ws_k$ are sub-exponential. Then, by Bernstein inequality and the union bound,
\begin{equation*} \label{eq:48}
    \ssupnorm{\frac{1}{n} \sum_{i=1}^n \x_{i,-m}(f^{\dagger}_{i,m_k}-\x_{i,-m}'\ws_k)} =O_P\left( \sqrt{\frac{\log p_{-m}}{n}} \right). 
\end{equation*}
On the other hand, we have 
\begin{eqnarray*}
    & & \ssupnorm{\frac{1}{n} \sum_{i=1}^n \x_{i,-m}(\hat{f}_{i,m_k}-f^{\dagger}_{i,m_k})} 
        \leq \left(\max_{i\leq n} |\hat{f}_{i,m_k}-f^{\dagger}_{i,m_k}|\right) \ssupnorm{\frac{1}{n} \sum_{i=1}^n \x_{i,-m}} \\
    & \leq & \left(\max_{i\leq n} \ltwonorm{\hat{\f}_{i,m}-\f^{\dagger}_{i,m}}\right) \ssupnorm{\frac{1}{n} \sum_{i=1}^n \x_{i,-m}} 
             = O_P\left( \frac{1}{\sqrt{n}}+\frac{n^{1/4}}{\sqrt{p_m}} \right)O_P\left(\sqlogqn \right),
\end{eqnarray*}
where the last equality follows from Lemma \ref{lemma:C1}. This completes the proof. 
\end{proof}

\begin{lem} \label{lemma:B2}
Suppose Conditions \ref{con:1}--\ref{con:3} hold, and $\lambda_2 \asymp \sqrt{(\log p_{-m})/n}\{1\vee (n^{1/4}/\sqrt{p_m}) \}$. Then,
\begin{eqnarray*}
\lonenorm{\wh_k-\ws_k}=O_P\left(\ss_k \left\{\sqlogqn \left(1\vee \frac{n^{1/4}}{\sqrt{p_m}} \right) \right \} \right).
\end{eqnarray*}
\end{lem}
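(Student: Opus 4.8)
The plan is to treat the constraint program \eqref{eq:11} as a Dantzig-type selector and run the standard $\ell_1$-error analysis, the one non-routine ingredient---probabilistic control of the cross term involving the \emph{estimated} factors---being already supplied by Lemma \ref{lemma:B3}. Write $\hat{\Sigmab}_{-m}=n^{-1}\sum_{i=1}^n \x_{i,-m}^{\otimes 2}$, $\h_k=\wh_k-\ws_k$, $S_k=\supp(\ws_k)$, and $\ss_k=|S_k|$.

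First I would check that, for $\lambda_2\asymp \sqlogqn\{1\vee (n^{1/4}/\sqrt{p_m})\}$, the true value $\ws_k$ lies in the feasible set of \eqref{eq:11} with probability tending to $1$. This is exactly inequality \eqref{eq:47}, which is established in the course of proving Lemma \ref{lemma:B3}. On that event, $\wh_k$ minimizes $\lonenorm{\cdot}$ over a set containing $\ws_k$, so $\lonenorm{\wh_k}\le \lonenorm{\ws_k}$, and the usual cone decomposition (using that $\ws_k$ is supported on $S_k$) gives $\lonenorm{\h_{k,S_k^c}}\le \lonenorm{\h_{k,S_k}}$, hence $\lonenorm{\h_k}\le 2\lonenorm{\h_{k,S_k}}\le 2\sqrt{\ss_k}\,\ltwonorm{\h_k}$ and $\lonenorm{\h_k}^2\le 4\ss_k\ltwonorm{\h_k}^2$.

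Next, both $\wh_k$ and $\ws_k$ satisfy the $\ell_\infty$ constraint in \eqref{eq:11} (the former by construction, the latter by the feasibility just established), so subtracting the two instances of the constraint yields $\ssupnorm{\hat{\Sigmab}_{-m}\h_k}\le 2\lambda_2$, and therefore $\h_k'\hat{\Sigmab}_{-m}\h_k\le \lonenorm{\h_k}\ssupnorm{\hat{\Sigmab}_{-m}\h_k}\le 2\lambda_2\lonenorm{\h_k}$. For the matching lower bound I would invoke a restricted eigenvalue inequality for the sample Gram matrix: by Condition \ref{con:3} and \eqref{eq:1}, $\lambda_{\min}(\E(\x_{-m}^{\otimes 2}))\ge \lambda_{\min}(\E(\u_{-m}^{\otimes 2}))>c$, and $\x_{-m}$ is sub-Gaussian under Condition \ref{con:1}, so Proposition 1 of \citet{raskutti2011minimax} gives, with probability tending to $1$, $\h'\hat{\Sigmab}_{-m}\h\ge \kappa_1\ltwonorm{\h}^2-\kappa_2\{(\log p_{-m})/n\}\lonenorm{\h}^2$ for all $\h$, for positive constants $\kappa_1,\kappa_2$. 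Combined with the cone bound $\lonenorm{\h_k}^2\le 4\ss_k\ltwonorm{\h_k}^2$ and $\ss_k(\log p_{-m})/n=o(1)$ (which follows from Condition \ref{con:4}, since $\ss_k\le \ss_w$ and $\ss_w\log p_{-m}\{1\vee(n^{1/4}/\sqrt{p_m})\}=o(n^{1/2})$), this gives $\h_k'\hat{\Sigmab}_{-m}\h_k\ge (\kappa_1/2)\ltwonorm{\h_k}^2$.

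Combining the two bounds, $(\kappa_1/2)\ltwonorm{\h_k}^2\le 2\lambda_2\lonenorm{\h_k}\le 4\lambda_2\sqrt{\ss_k}\,\ltwonorm{\h_k}$, so $\ltwonorm{\h_k}=O_P(\sqrt{\ss_k}\lambda_2)$ and then $\lonenorm{\h_k}\le 2\sqrt{\ss_k}\,\ltwonorm{\h_k}=O_P(\ss_k\lambda_2)$; substituting the stated choice of $\lambda_2$ yields the claimed rate. The only step requiring genuine care is the restricted eigenvalue inequality for $\hat{\Sigmab}_{-m}$ in the high-dimensional regime, but this is precisely the tool already used in the proof of Lemma \ref{lemma:B1}, so it poses no new obstacle; everything probabilistically delicate---the deviation of $n^{-1}\sum_i\x_{i,-m}(\hat{f}_{i,m_k}-\x_{i,-m}'\ws_k)$ with the estimated factors in place---has been packaged into Lemma \ref{lemma:B3}.
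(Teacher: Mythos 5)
Your proposal is correct and follows essentially the same route as the paper's proof: feasibility of $\ws_k$ at the stated $\lambda_2$ (i.e., \eqref{eq:47} from Lemma \ref{lemma:B3}), the cone condition from $\ell_1$-minimality, the bound $\ssupnorm{\H_x(\wh_k-\ws_k)}\lesssim\lambda_2$, and the restricted eigenvalue bound from Proposition 1 of \citet{raskutti2011minimax} combined via the basic inequality to give $\ltwonorm{\wh_k-\ws_k}=O_P(\sqrt{\ss_k}\lambda_2)$ and hence the claimed $\ell_1$ rate. The only cosmetic difference is that you state the RE inequality in its subtractive form and absorb the $\lonenorm{\cdot}^2$ term using $\ss_k(\log p_{-m})/n=o(1)$, whereas the paper invokes the cone-restricted version directly.
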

\begin{proof}
Recall that, for the $k$th column of $\W$, we solve
\begin{equation*}
    \wh_k=\argmin \lonenorm{\w_k}, \quad \text{such that} \quad \ssupnorm{\frac{1}{n} \sum_{i=1}^n \x_{i,-m}(\hat{f}_{i,m_k}-\x_{i,-m}'\w_k)}\leq \lambda_2,
\end{equation*}
where $\hat{f}_{i,m_k}$ is the $k$th element of $\fh_{i,m}$. Let $S_k=\supp(\ws_k)$, where $\ws_k$ is the $k$th column of $\Wbs=\E(\x_{i,-m}^{\otimes 2})^{-1}\E(\x_{i,-m}\f_{i,m}')$. Then, we have $\lonenorm{\ws_{S_k}}\geq \lonenorm{\wh_{S_k}}+\lonenorm{\wh_{S_k^c}}$. By the triangle inequality, we have $\lonenorm{\wh_{S_k^c}}\geq \lonenorm{\ws_{S_k}}- \lonenorm{\wh_{S_k}-\ws_{S_k}}$. Let $\Deltabh_k=\wh_k-\ws_k$. Then, by noting that $\lonenorm{\ws_{S_k^c}}=0$, we have $\lonenorm{\Deltabh_{S_k}}\geq \lonenorm{\Deltabh_{S_k^c}}$. It follows from Lemma \ref{lemma:B3} that
\begin{eqnarray*}
\ssupnorm{\frac{1}{n} \sum_{i=1}^n \x_{i,-m}(\hat{f}_{i,m_k}-\x_{i,-m}'\wh_k)} = O_P\left( \sqlogqn \left(1\vee \frac{n^{1/4}}{\sqrt{p_m}}\right)\right). 
\end{eqnarray*}
Denote $\H_x= n^{-1} \sum_{i=1}^n \x_{i,-m}^{\otimes 2}$, $\H_{xf}=n^{-1} \sum_{i=1}^n \x_{i,-m}\hat{f}_{i,m_k}$, and $\Deltab_k=\wh_k-\ws_k$. Then, 
\begin{equation*}
    \ssupnorm{\H_x\Deltabh_k} \leq \ssupnorm{\H_{xf}-\H_x\wh_k} + \ssupnorm{\H_{xf} - \H_x\ws_k} = O_P\left( \sqlogqn \left(1\vee \frac{n^{1/4}}{\sqrt{p_m}}\right)\right).
\end{equation*}
Together with $\lonenorm{\Deltabh_k}\leq 2 \lonenorm{\Deltabh_{S_k}}\leq 2 \sqrt{\ss_k} \ltwonorm{\Deltabh_k}$, we have
\begin{eqnarray} \label{eq:49}
\Deltabh_k\H_x\Deltabh_k &\leq & \lonenorm{\Deltabh_k} \ssupnorm{\H_x\Deltabh_k} = O_P\left( \lonenorm{\Deltabh_k} \sqlogqn \left(1\vee \frac{n^{1/4}}{\sqrt{p_m}}\right) \right) \nonumber \\
& = & O_P\left( \ltwonorm{\Deltabh_k}\sqrt{\frac{\ss_k \log p_{-m}}{n}} \left(1\vee \frac{n^{1/4}}{\sqrt{p_m}}\right) \right).
\end{eqnarray}
Note that Condition \ref{con:1} and (\ref{eq:1}) imply that $\x_{-m}$ is sub-Gaussian. In addition, Condition \ref{con:3} implies that $\lambda_{\min}(\E (\x_{i,-m}^{\otimes 2}))>c$. Then, it follows from Proposition 1 of \cite{raskutti2011minimax} that the restricted eigenvalue condition holds for $\H_{x}$ with high probability, i.e., $\Deltabh_k\H_x\Deltabh_k\geq \kappa \ltwonorm{\Deltabh_k}^2$ for some $\kappa>0$ and all $\Deltabh_k$,  such that  $\lonenorm{\Deltabh_{S_k^c}}\leq \lonenorm{\Deltabh_{S_k}}$. This result, together with (\ref{eq:49}), implies that
\begin{equation*}
\lonenorm{\Deltabh_k}\leq 2 \sqrt{\ss_k} \ltwonorm{\Deltabh_k}=O_P\left( \ss_k\sqlogqn \left(1\vee \frac{n^{1/4}}{\sqrt{p_m}}\right) \right).
\end{equation*}
This completes the proof. 
\end{proof}

\begin{lem}\label{lemma:B6}
Suppose the conditions of Theorem \ref{thm:4} hold. Then, uniformly for all $\betabs\in \N$,
\begin{equation*}
\ltwonorm{\sqrt{n}(\Itildehalf-\Itruehalf)\tilde{\S}(\betahnom,\zero)}=o_P(1).
\end{equation*}
\end{lem}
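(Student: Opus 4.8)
The plan is to separate the two factors via the operator‑norm bound
\begin{equation*}
\ltwonorm{\sqrt{n}(\Itildehalf-\Itruehalf)\tilde{\S}(\betahnom,\zero)}\;\le\;\ltwonorm{\Itildehalf-\Itruehalf}\cdot\ltwonorm{\sqrt{n}\,\tilde{\S}(\betahnom,\zero)},
\end{equation*}
and to show that the first factor goes to zero fast enough to kill the second uniformly over $\N$. Since $K_m$ is fixed, all matrix norms on $\Rcal^{K_m\times K_m}$ are equivalent, so there is no loss in measuring $\Itildehalf-\Itruehalf$ in elementwise sup‑norm.

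For the first factor I would reuse the computation behind (\ref{eq:103}): combining the Dantzig bound of Lemma \ref{lemma:B2}, the PCA consistency rates of Lemma \ref{lemma:C1}, and the exact identity $n^{-1}\sum_{i}\fh_{i,m}^{\otimes 2}=\I_{K_m}$, one gets $\supnorm{\Itilde-\Itrue}=O_P\big(\ss_w\sqlogqn\{1\vee(n^{1/4}/\sqrt{p_m})\}\big)$, which is $o_P(1)$ by Condition \ref{con:4}. Because $\lambda_{\min}(\Itrue)\ge c>0$ by Condition \ref{con:21} and $\lambda_{\max}(\Itrue)\le\sigmae^{-2}<\infty$, with probability tending to one $\Itilde$ lies in a fixed neighbourhood of $\Itrue$ on which the inverse‑square‑root map is Lipschitz; hence $\ltwonorm{\Itildehalf-\Itruehalf}\lesssim\ltwonorm{\Itilde-\Itrue}$, with the same $O_P$‑rate.

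For the second factor I would not bound $\tilde{\S}(\betahnom,\zero)$ directly but appeal to Lemmas \ref{lemma:B7} and \ref{lemma:B8}, which together give, uniformly over $\N$,
\begin{equation*}
\sqrt{n}\,\tilde{\S}(\betahnom,\zero)=\sqrt{n}\,\S(\betabs,\gammasm)-\sqrt{n}\,\Itrue\gammasm+o_P(1),
\end{equation*}
where $\Itruehalf$ may be stripped off because $\lambda_{\max}(\Itrue)$ is bounded. The leading term $\sqrt{n}\,\S(\betabs,\gammasm)=(\sqrt{n}\sigmae^{2})^{-1}\sum_i\epsilon_i(\f_{i,m}-{\W^{\ast}}'\x_{i,-m})$ does not depend on $\betasnom$, is mean zero, and has second moment $\tr(\Itrue)=O(1)$, so it is $O_P(1)$ by Markov's inequality and Condition \ref{con:1}; the term $\sqrt{n}\,\Itrue\gammasm$ is deterministic with $\ltwonorm{\sqrt{n}\,\Itrue\gammasm}\asymp\sqrt{n}\,\ltwonorm{\c_{m_n}}\asymp\sqrt{h_{m_n}}$, using $\gammasm=\c_{m_n}$ and $c\le\lambda_{\min}(\Itrue)\le\lambda_{\max}(\Itrue)\le\sigmae^{-2}$. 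Thus $\ltwonorm{\sqrt{n}\,\tilde{\S}(\betahnom,\zero)}=O_P(1+\sqrt{n}\,\ltwonorm{\c_{m_n}})$ uniformly over $\N$. Multiplying the two bounds, the quantity of interest is $O_P\big(\ss_w\sqlogqn\{1\vee(n^{1/4}/\sqrt{p_m})\}\,(1+\sqrt{n}\,\ltwonorm{\c_{m_n}})\big)$, which is $o_P(1)$ by Condition \ref{con:4} together with the assumption $\ltwonorm{\c_{m_n}}=o(1/\sqrt{\log n})$ of Theorem \ref{thm:4}.

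The hard part is this last rate trade‑off: under the local alternative the score does \emph{not} center at zero, so the deterministic drift $\sqrt{n}\,\Itrue\gammasm$ of size $\sqrt{h_{m_n}}$ must be absorbed by the (slow) convergence rate of $\Itilde$, which is exactly where both the sparsity Condition \ref{con:4} and the shrinkage condition on $\c_{m_n}$ are genuinely used. A secondary technical point is the matrix‑inverse‑square‑root perturbation step, which requires a uniform‑in‑$n$ lower bound on $\lambda_{\min}(\Itilde)$; this is inherited from Condition \ref{con:21} once the consistency $\Itilde\xrightarrow{P}\Itrue$ from (\ref{eq:103}) is in hand.
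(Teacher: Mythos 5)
Your route is the same as the paper's: the operator-norm split, the bound on $\ltwonorm{\Itildehalf-\Itruehalf}$ through $\supnorm{\Itilde-\Itrue}$ using (\ref{eq:103}) and Condition \ref{con:21} (your Lipschitz perturbation of the inverse square root is a harmless variant of the paper's device of bounding the difference of square roots by the square root of the difference), and Lemmas \ref{lemma:B7} and \ref{lemma:B8} to control the score factor. You are in fact more careful than the paper at the second factor: the paper simply asserts, citing Lemma \ref{lemma:B7}, that $\ltwonorm{\sqrt{n}\,\tilde{\S}(\betahnom,\zero)}=o_P(1)$, whereas you correctly retain the drift $\sqrt{n}\,\Itrue\gammasm$, whose norm is of order $\sqrt{n}\ltwonorm{\c_{m_n}}$ and need not be bounded under the local alternative.

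The gap is in your closing trade-off. Condition \ref{con:4} only yields $\ss_w\sqlogqn\{1\vee(n^{1/4}/\sqrt{p_m})\}=o\left(1/\sqrt{\log p_{-m}}\right)$, while the theorem's hypothesis allows $\sqrt{n}\ltwonorm{\c_{m_n}}$ to be as large as $o\left(\sqrt{n/\log n}\right)$; the product of these two bounds is then only $o\left(\sqrt{n}/\sqrt{\log p_{-m}\log n}\right)$, which diverges whenever $\log p_{-m}\log n=o(n)$. Concretely, take $\log p_{-m}\asymp \log n$, $p_m\gg \sqrt{n}$, $\ss_w\asymp n^{2/5}$ (permitted by Condition \ref{con:4}) and $\ltwonorm{\c_{m_n}}\asymp n^{-1/5}=o(1/\sqrt{\log n})$: your final bound is of order $n^{1/5}\sqrt{\log n}\to\infty$. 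So the claimed implication ``Condition \ref{con:4} together with $\ltwonorm{\c_{m_n}}=o(1/\sqrt{\log n})$ gives $o_P(1)$'' is false as stated; it closes only under an additional requirement of the type $\ss_w\{1\vee(n^{1/4}/\sqrt{p_m})\}\sqrt{\log p_{-m}}\,\ltwonorm{\c_{m_n}}=o(1)$, for example when $\ss_w\{1\vee(n^{1/4}/\sqrt{p_m})\}=O(1)$ and $\log p_{-m}\asymp\log n$. You rightly flagged this trade-off as the crux, but the cited conditions do not deliver it; note that the paper's own proof does not confront it either, since it treats the score factor as $o_P(1)$ outright, so a complete argument needs either such an extra sparsity-times-drift condition or a finer analysis of how $\Itildehalf-\Itruehalf$ acts on the deterministic drift direction.
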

\begin{proof}
First, we note that
 \begin{align*}
&\hspace{3ex}\ltwonorm{\Itildehalf-\Itruehalf}
= \ltwonorm{\Itruehalf({\I}^{\ast^{1/2}}_{\gammab_m|\betab_{-m}}-\tilde{\I}_{\gammab_m|\betab_{-m}}^{1/2}) \tilde{\I}_{\gammab_m|\betab_{-m}}^{-1/2}} \\
& \lesssim \ltwonorm{{\I}^{\ast^{1/2}}_{\gammab_m|\betab_{-m}}-\tilde{\I}_{\gammab_m|\betab_{-m}}^{1/2}} 
\leq \ltwonorm{{\I}^{\ast}_{\gammab_m|\betab_{-m}}-\tilde{\I}_{\gammab_m|\betab_{-m}}}^{1/2}
\leq \lonenorm{{\I}^{\ast}_{\gammab_m|\betab_{-m}}-\tilde{\I}_{\gammab_m|\betab_{-m}}}^{1/2}\\
& \leq \sqrt{K_m} \supnorm{{\I}^{\ast}_{\gammab_m|\betab_{-m}}-\tilde{\I}_{\gammab_m|\betab_{-m}}}^{1/2} 
= o_P(1), 
\end{align*}
where $\lesssim$ follows from Condition \ref{con:21}, and the last equality follows from (\ref{eq:103}) and that $K_m$ is fixed. Lemma \ref{lemma:B7} implies that $\ltwonorm{\sqrt{n} \tilde{\S}(\betahnom,\zero)}=o_P\left(1 \right)$. Therefore, 
\begin{equation*}
      \ltwonorm{\sqrt{n}(\Itildehalf-\Itruehalf)\tilde{\S}(\betahnom,\zero)}\leq
      \ltwonorm{\Itildehalf-\Itruehalf}\ltwonorm{\sqrt{n}\tilde{\S}(\betahnom,\zero)}=o_P\left(1 \right).
\end{equation*}
This completes the proof. 
\end{proof}

\begin{lem} \label{lemma:B7}
Suppose the conditions of Theorem \ref{thm:4} hold. Then, uniformly for all $\betabs\in \N$, 
\begin{equation*}
\sqrt{n} \Itruehalf \{\tilde{\S}(\betahnom,\zero)-\S(\betasnom,\zero) \}=o_P(1). 
\end{equation*}
\end{lem}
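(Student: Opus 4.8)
The plan is to recycle the algebraic identity \eqref{eq:19} from the proof of Theorem~\ref{thm:1} and re-examine each of its terms under the local alternative $H_{a_n}$. By \eqref{eq:19},
\[
\tilde{\S}(\betahnom,\zero)-\S(\betasnom,\zero) = -\frac{1}{n\sigmae^2}(\W^{\ast}-\hat{\W})'\X_{-m}'(\Y-\X_{-m}\betasnom) - \frac{1}{n\sigmae^2}(\hat{\F}_m'\X_{-m}-\hat{\W}'\X_{-m}'\X_{-m})(\betahnom-\betasnom)
\]
(up to a remainder $\frac{1}{n\sigmae^2}(\hat{\F}_m\H_m-\F_m)'(\Y-\X_{-m}\betasnom)$ arising from replacing $\hat{\f}_{i,m}$ by its population rotation, which I handle separately). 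The one genuinely new feature compared with the null case is that, by \eqref{eq:1} and $\gammabs_m=\Lambdab_m'\b_{m_n}=\c_{m_n}$,
\[
\Y-\X_{-m}\betasnom = \X_m\b_{m_n}+\epsilonb = \F_m\c_{m_n}+\U_m\b_{m_n}+\epsilonb ,
\]
so the ``residual'' is no longer the pure noise $\epsilonb$. I would therefore split each term above according to this decomposition: the $\epsilonb$-contributions are bounded verbatim as in \eqref{eq:104}, \eqref{eq:105} and the lines following them (giving $o_P(n^{-1/2})$), and the new $\F_m\c_{m_n}$- and $\U_m\b_{m_n}$-contributions are controlled using $\ltwonorm{\b_{m_n}}=o(1/\sqrt{\log n})$, $\ltwonorm{\c_{m_n}}=o(1/\sqrt{\log n})$, Lemmas~\ref{lemma:B2}, \ref{lemma:B3} and \ref{lemma:C1}.

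The second term and the factor-rotation remainder are routine. For the second term, Lemma~\ref{lemma:B1} continues to give $\lonenorm{\betahnom-\betasnom}=O_P(\ss_{-m}\{\sqrt{(\log p_{-m})/n}+1/\sqrt{p_m}\})$ under $H_{a_n}$, since the only change in its proof is that the noise in the reduced penalized regression is now $\u_{i,m}'\b_{m_n}+\epsilon_i$, and $\u_{i,m}'\b_{m_n}$ is mean-zero sub-Gaussian with scale $\lesssim\ltwonorm{\b_{m_n}}=o(1)$, which does not alter the rate; combining this with Lemma~\ref{lemma:B3} and Condition~\ref{con:4} makes $\sqrt{n}$ times the second term $o_P(1)$, exactly as in \eqref{eq:105}. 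For the remainder, its $\epsilonb$-part is $o_P(n^{-1/2})$ by conditioning on $\X_m$ and using the \emph{averaged} bound $\frac{1}{n}\sum_i|(\hat{\F}_m\H_m)_{ik}-f_{i,m_k}|^2=O_P(1/n+1/p_m)$ of Lemma~\ref{lemma:C1}(a), while its $\F_m\c_{m_n}$- and $\U_m\b_{m_n}$-parts follow from the same averaged bound together with the near-orthogonalities $\frac{1}{n}\hat{\F}_m'\hat{\F}_m=\I_{K_m}$, $\H_m\H_m'\approx\I_{K_m}$, $\frac{1}{n}\F_m'\F_m\approx\I_{K_m}$ and the smallness of $\b_{m_n},\c_{m_n}$.

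The main obstacle is the first term, and within it the piece $\frac{1}{n\sigmae^2}(\W^{\ast}-\hat{\W})'\X_{-m}'\F_m\c_{m_n}$. Unlike $\X_{-m}'\epsilonb$ in \eqref{eq:104}, the average $\frac{1}{n}\X_{-m}'\F_m$ does \emph{not} concentrate around zero, because $\x_{-m}$ and $\f_m$ are correlated; bounding this piece by $\max_k\lonenorm{\wh_k-\ws_k}$ times $\supnorm{\frac{1}{n}\X_{-m}'\F_m\c_{m_n}}=O_P(\ltwonorm{\c_{m_n}})$ is \emph{not} obviously $o_P(n^{-1/2})$. I would instead write $\F_m=\X_{-m}\W^{\ast}+(\F_m-\X_{-m}\W^{\ast})$, where the rows of $\F_m-\X_{-m}\W^{\ast}$ are uncorrelated with $\x_{-m}$ by the normal equation defining $\W^{\ast}$; then $\frac{1}{n}\X_{-m}'(\F_m-\X_{-m}\W^{\ast})\c_{m_n}$ is a mean-zero sub-exponential average of order $O_P(\ltwonorm{\c_{m_n}}\sqrt{(\log p_{-m})/n})$, which multiplied by $\max_k\lonenorm{\wh_k-\ws_k}=O_P(\ss_w\lambda_2)$ and combined with Condition~\ref{con:4} is $o_P(n^{-1/2})$; and the leading piece $\frac{1}{n}(\W^{\ast}-\hat{\W})'\X_{-m}'\X_{-m}\W^{\ast}\c_{m_n}$ is absorbed through the Dantzig feasibility $\ssupnorm{\frac{1}{n}\sum_i\x_{i,-m}(\hat{f}_{i,m_k}-\x_{i,-m}'\wh_k)}\le\lambda_2$ of \eqref{eq:11}, the sparsity of $\W^{\ast}$, and once more the smallness of $\c_{m_n}$. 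The $\U_m\b_{m_n}$-piece is simpler, since $\E(\x_{-m}\u_m')$ is the sparse off-diagonal block of $\Sigmab_u$, so $\frac{1}{n}\X_{-m}'\U_m\b_{m_n}$ is handled directly by Bernstein's inequality. Finally, uniformity over $\N$ is automatic, because every bound above depends on $\betabs$ only through the fixed sparsity levels $\ss_{-m},\ss_w$ and through $\ltwonorm{\b_{m_n}},\ltwonorm{\c_{m_n}}$, which satisfy the stated rates uniformly; hence all the $o_P$ statements hold uniformly over $\N$.
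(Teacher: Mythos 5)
Your proposal follows essentially the same route as the paper's own proof: it starts from the decomposition \eqref{eq:19}, re-examines the two summands under $H_{a_n}$ by checking that Lemmas \ref{lemma:B1}--\ref{lemma:B3} remain valid when the residual is $\F_m\c_{m_n}+\U_m\b_{m_n}+\epsilonb$ rather than $\epsilonb$, and handles the second summand exactly as the paper does (the extra noise $\u_{i,m}'\b_{m_n}$ in the reduced regression is mean-zero sub-Gaussian, so the rate of Lemma \ref{lemma:B1} is unchanged, and the Lemma \ref{lemma:B3}-type bound is unaffected by $\betabs$). Where you genuinely depart from the paper is in the first summand: the paper simply asserts $\supnorm{n^{-1}\X_{-m}'(\F_m\gammabs_m+\U_m\betasm+\epsilonb)}=O_P(\sqrt{(\log p_{-m})/n})$ via Bernstein's inequality and then multiplies by $\lonenorm{\ws_k-\wh_k}$ as in \eqref{eq:104}, whereas you correctly point out that $n^{-1}\X_{-m}'\F_m\c_{m_n}$ has a nonzero mean $\E(\x_{-m}\f_m')\c_{m_n}$ of order $\ltwonorm{\c_{m_n}}$ that concentration does not remove, and you repair this by projecting $\F_m=\X_{-m}\W^{\ast}+(\F_m-\X_{-m}\W^{\ast})$ so that the residual part is genuinely centered; you also retain the rotation remainder $(\hat{\F}_m\H_m-\F_m)'(\Y-\X_{-m}\betasnom)$ that \eqref{eq:19} silently drops by identifying $\f_{i,m}$ with its rotated estimate, and control it with the averaged bound of Lemma \ref{lemma:C1}(a). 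Both refinements are points of extra care rather than a change of strategy. One caveat: your treatment of the remaining ``leading piece'' $n^{-1}(\W^{\ast}-\hat{\W})'\X_{-m}'\X_{-m}\W^{\ast}\c_{m_n}$ is only sketched; the Dantzig feasibility plus sparsity gives at best $O_P(\lambda_2\sqrt{\ss_w}\,\ltwonorm{\c_{m_n}})$ for it, and it is not evident that $\sqrt{n}\,\lambda_2\sqrt{\ss_w}\,\ltwonorm{\c_{m_n}}=o(1)$ follows from Condition \ref{con:4} together with $\ltwonorm{\c_{m_n}}=o(1/\sqrt{\log n})$ alone. This is, however, exactly the spot where the paper's own argument is at least as loose (its claimed $O_P(\sqrt{(\log p_{-m})/n})$ sup-norm bound ignores the same non-centered term, which would require $\ss_w$ rather than your $\sqrt{\ss_w}$ in the corresponding product), so relative to the published proof your proposal is not missing anything --- it identifies and partially tightens the one delicate step, rather than introducing a new gap.
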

\begin{proof}
By (\ref{eq:19}), we have that 
\begin{equation*} 
\begin{split}
\S(\betasnom,\zero)-\tilde{\S}(\betahnom,\zero)
= \; & \frac{1}{n\sigmae^2}(\W^{\ast}-\hat{\W})' \X_{-m}'(\Y-\X_{-m}\betasnom) \\
& + \frac{1}{n\sigmae^2}(\hat{\F}_m'\X_{-m}-\hat{\W}'\X_{-m}'\X_{-m})(\betabh_{-m}-\betabs_{-m})
\equiv I + II.
\end{split}
\end{equation*}
For the term $I$, under $H_a$, $\Y-\X_{-m}\betasnom=\X_m\betasm+\epsilonb$. Therefore,
\begin{align*}
&\hspace{3ex}\supnorm{\frac{1}{n\sigmae^2}\X_{-m}'(\Y-\X_{-m}\betasnom)} 
= \supnorm{\frac{1}{n\sigmae^2}\X_{-m}'(\X_{m}\betasm+\epsilonb)}\\
&\leq \supnorm{\frac{1}{n\sigmae^2}\X_{-m}' \epsilonb} + \supnorm{\frac{1}{n\sigmae^2}\X_{-m}'(\F_m\gammabs_m+\U_m\betasm)} = O_P\left(\sqrt{\frac{\log p_{-m}}{n}} \right),
\end{align*}
where the last equation follows from the sub-Gaussian assumption in Condition \ref{con:1}, and an application of Bernstein inequality. A careful inspection of the proof of Lemma \ref{lemma:B2} shows that the lemma still holds under $H_a$. Therefore, following the same argument as in (\ref{eq:104}), for each $k\in [K_m]$, we have
\begin{equation} \label{eq:105}
\begin{split}
|(n\sigmae^2)^{-1}(\ws_k-\wh_k)'\X_{-m}'(\Y-\X_{-m}\betabs_{-m})| 
& \leq \lonenorm{\ws_k-\wh_k} \supnorm{(n\sigmae^2)^{-1}(\Y-\X_{-m}\betabs_{-m})} \\
& = o_P\left(1/\sqrt{n} \right).
\end{split}
\end{equation}
Therefore, $I=o_P\left(1/\sqrt{n} \right)$.
  
For the term $II$, we bound $\lonenorm{\betahnom-\betasnom}$ and
$\supnorm{(n\sigmae^2)^{-1}\hat{\F}_m'\X_{-m}-\hat{\W}'\X_{-m}'\X_{-m}}$ respectively.  To bound
$\lonenorm{\betahnom-\betasnom}$, under $H_a$, we only need to replace $\epsilon_i$ in (\ref{eq:44}) with
$\epsilon_i+\U_{i,m}\betasm$. Due to the sub-Gaussian assumption of $\U_{i,m}$ in Condition \ref{con:1}, and the fact that
$\epsilon_i$ and $\U_{i,m}$ are uncorrelated, the bounds we have established in (\ref{eq:45}) and (\ref{eq:46}) still hold. Therefore, uniformly for all $\betabs\in \N$, $\lonenorm{\betahnom-\betasnom}$ has the same upper bound as the one established in Lemma \ref{lemma:B1}. In addition, the bound we have established for $\supnorm{(n\sigmae^2)^{-1}\hat{\F}_m'\X_{-m}-\hat{\W}'\X_{-m}'\X_{-m}}$ also holds uniformly for all $\betabs \in \N$. Then, it follows from (\ref{eq:105}) that $II=o_P\left(1/\sqrt{n} \right)$.

Combining the bounds of the terms $I$, $II$, and Condition \ref{con:21} completes the proof.
\end{proof}
\smallskip

\begin{lem} \label{lemma:B8}
Suppose the conditions of Theorem \ref{thm:4} hold. Then, uniformly for all $\betabs\in \N$, 
\begin{equation*}
\S(\betabs,\gammasm)-\S(\betasnom,\zero)-\Itrue\gammasm=o_P\left(n^{-1/2} \right). 
\end{equation*}
\end{lem}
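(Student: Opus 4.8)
The plan is to reduce the lemma to two elementary i.i.d.\ concentration estimates. The key simplification relative to Lemmas~\ref{lemma:B6}--\ref{lemma:B7} is that $\S(\betabs,\gammasm)$, $\S(\betasnom,\zero)$, $\W^{\ast}$ and $\F$ here are all population/oracle quantities, so there is no factor-estimation error to track and no regularized estimator to analyze. First note that both scores are evaluated at the \emph{true} nuisance $\betasnom$, so their residuals collapse: inside $\S(\betabs,\gammasm)$ it equals $y_i-\f_{i,m}'\gammasm-\x_{i,-m}'\betasnom-\u_{i,m}'\betasm=\epsilon_i$, while inside $\S(\betasnom,\zero)$ it equals $y_i-\x_{i,-m}'\betasnom=\x_{i,m}'\betasm+\epsilon_i=\f_{i,m}'\gammasm+\u_{i,m}'\betasm+\epsilon_i$, using $\x_{i,m}=\Lambdab_m\f_{i,m}+\u_{i,m}$ and $\gammasm=\Lambdab_m'\betasm$. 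Since neither residual depends on $\betasnom$ after the truth is plugged in, the quantity in the lemma depends on $\betabs\in\N$ only through the fixed sequence $\b_{m_n}$, and the claimed uniformity over $\N$ is therefore automatic. Subtracting the two definitions gives the exact identity
\[
\S(\betabs,\gammasm)-\S(\betasnom,\zero)=-\frac{1}{n\sigmae^2}\sum_{i=1}^n\bigl(\f_{i,m}'\gammasm+\u_{i,m}'\betasm\bigr)\bigl(\f_{i,m}-{\W^{\ast}}'\x_{i,-m}\bigr),
\]
so it remains to show the right-hand side equals $-\Itrue\gammasm+o_P(n^{-1/2})$.

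I would split the sum into an $\f$-part and a $\u$-part. For the $\f$-part, write it as $-\sigmae^{-2}\bigl(\frac1n\sum_{i=1}^n(\f_{i,m}-{\W^{\ast}}'\x_{i,-m})\f_{i,m}'\bigr)\gammasm$. Each entry of the summand is a product of sub-Gaussian variables---$\x_{i,-m}'\ws_k$ being sub-Gaussian by Condition~\ref{con:1}---hence sub-exponential, and since $K_m$ is fixed, Bernstein's inequality and a union bound give $\frac1n\sum_{i=1}^n(\f_{i,m}-{\W^{\ast}}'\x_{i,-m})\f_{i,m}'=\sigmae^2\Itrue+O_P(n^{-1/2})$ entrywise; the limit here is the moment identity $\E\{(\f_{i,m}-{\W^{\ast}}'\x_{i,-m})\f_{i,m}'\}=\sigmae^2\Itrue$, which is immediate from $\W^{\ast}=\E(\x_{i,-m}^{\otimes 2})^{-1}\E(\x_{i,-m}\f_{i,m}')$ and $\var(\f_{i,m})=\I_{K_m}$. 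Multiplying by $\gammasm=\c_{m_n}$ and using $\ltwonorm{\c_{m_n}}=o(1/\sqrt{\log n})$ from the hypotheses of Theorem~\ref{thm:4}, the stochastic error contributes $O_P(n^{-1/2}\ltwonorm{\c_{m_n}})=o_P(n^{-1/2})$, so the $\f$-part equals $-\Itrue\gammasm+o_P(n^{-1/2})$.

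For the $\u$-part, $-\frac1{n\sigmae^2}\sum_{i=1}^n(\u_{i,m}'\betasm)(\f_{i,m}-{\W^{\ast}}'\x_{i,-m})$, each summand is mean zero: $\E(\f_{i,m}\u_{i,m}')=\zero$ by Condition~\ref{con:1}, and $\u_{i,m}$ is uncorrelated with $\x_{i,-m}$ (the idiosyncratic errors are uncorrelated across modalities and with all the factors), so $\E\{(\f_{i,m}-{\W^{\ast}}'\x_{i,-m})\u_{i,m}'\}=\zero$. Its second moment is $O(\ltwonorm{\betasm}^2)$ by Cauchy--Schwarz applied to the two sub-Gaussian factors, using $\lambda_{\max}(\Sigmab_{u_m})\le C$ from Condition~\ref{con:3} to bound $\var(\u_{i,m}'\betasm)$. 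Hence by Chebyshev's inequality the $\u$-part is $O_P(\ltwonorm{\b_{m_n}}/\sqrt{n})$, which is $o_P(n^{-1/2})$ because $\ltwonorm{\b_{m_n}}=o(1/\sqrt{\log n})$. Adding the two parts yields the claim.

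I expect the only nontrivial point to be the $\u$-part, i.e.\ checking that the idiosyncratic contribution $\u_{i,m}'\betasm$ to the residual of $\S(\betasnom,\zero)$ leaves no surviving bias in the decorrelated score; this is precisely why Theorem~\ref{thm:4} strengthens the local-alternative assumption to $\ltwonorm{\b_{m_n}}=o(1/\sqrt{\log n})$ and $\ltwonorm{\c_{m_n}}=o(1/\sqrt{\log n})$ beyond the conditions of Theorem~\ref{thm:1}: the smallness of $\b_{m_n}$ and $\c_{m_n}$ is exactly what upgrades the two $O_P(n^{-1/2})$-order fluctuations above to $o_P(n^{-1/2})$.
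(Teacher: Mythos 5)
Your proposal is correct in substance and follows essentially the paper's own route: the same exact cancellation of the residuals, the same reduction to $(n\sigmae^2)^{-1}\sum_{i=1}^n\x_{i,m}'\betasm(\f_{i,m}-{\W^{\ast}}'\x_{i,-m})$, the same split into an $\f$-part centered at its mean (which equals $\sigmae^2\Itrue\gammasm$ by the definition of $\W^{\ast}$ and $\var(\f_{i,m})=\I_{K_m}$) and a mean-zero $\u$-part, and the same use of the smallness of $\c_{m_n}$ and $\b_{m_n}$ to make both fluctuations $o_P(n^{-1/2})$; your entrywise Bernstein-plus-Chebyshev bounds (which avoid the $\sqrt{\log n}$ factor the paper picks up from its concentration lemma) and your observation that uniformity over $\N$ is automatic because the quantity depends on $\betabs$ only through $\b_{m_n}$ are harmless sharpenings of what the paper leaves implicit. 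The one discrepancy is a sign: since $-\Itrue\gammasm$ is indeed the centering of $\S(\betabs,\gammasm)-\S(\betasnom,\zero)$, what you actually establish is $\S(\betabs,\gammasm)-\S(\betasnom,\zero)+\Itrue\gammasm=o_P(n^{-1/2})$, whereas the lemma as printed subtracts $\Itrue\gammasm$. This is not a defect of your argument: the first display in the paper's proof drops the same minus sign (it writes $+\x_{i,m}'\betasm(\f_{i,m}-{\W^{\ast}}'\x_{i,-m})$ where the correct expansion carries a minus), so the paper, like you, in effect proves the sign-corrected statement, and because only the noncentrality $h_{m_n}=n\c_{m_n}'\Itrue\c_{m_n}$ enters Theorem \ref{thm:4} the flip is immaterial downstream; still, you should flag the mismatch explicitly rather than asserting that "the claim" as stated follows. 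Lastly, your step $\E\{(\f_{i,m}-{\W^{\ast}}'\x_{i,-m})\u_{i,m}'\}=\zero$ invokes uncorrelatedness of $\u_{i,m}$ with $\u_{i,-m}$, which the model does not literally impose ($\Sigmab_u$ is only assumed sparse); the paper's bound on its term $II$ needs the same property, so this is an imprecision shared with the paper rather than a gap specific to your proof.
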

\begin{proof}
By definition, we have
\begin{align*}
          & \hspace{3ex}\sqrt{n}\{\S(\betabs,\gammasm)-\S(\betasnom,\zero)-\Itrue\gammasm \}\\
          &=\frac{1}{\sqrt{n}\sigmae^2} \sum_{i=1}^n \x_{i,m}'\betasm(\f_{i,m}-{\W^{\ast}}'\x_{i,-m})-\Itrue\gammasm\\
          &=\frac{1}{\sqrt{n}\sigmae^2} \sum_{i=1}^n
            [(\f_{i,m}-{\W^{\ast}}'\x_{i,-m})\f_{i,m}'\gammasm-\E\{(\f_{i,m}-{\W^{\ast}}'\x_{i,-m})\f'_{i,m}\gammasm
            \}]\\
          &\hspace{3ex}+ \frac{1}{\sqrt{n}\sigmae^2} \sum_{i=1}^n \U_{i,m}'\betasm(\f_{i,m}-{\W^{\ast}}'\x_{i,-m}) 
          \equiv I+II. 
\end{align*}

For the term $I$, its $k$th element equals
\begin{equation*}
 \frac{1}{\sqrt{n}\sigmae^2} \sum_{i=1}^n \left[\left( f_{i,m_k}-{{\w}_k^{\ast}}'\x_{i,-m} \right) \f_{i,m}'\gammasm - \E\left\{\left(\f_{i,m}-{{\w}_k^{\ast}}'\x_{i,-m} \right)\f'_{i,m} \gammasm \right\} \right].
\end{equation*}
By the sub-Gaussian assumptions on $f_{i,m_k}$ and ${\ws_k}'\x_{i,-m}$ in Condition \ref{con:1}, and the standard concentration inequality \citep[e.g.,][Lemma H.2]{ning2017general}, the $k$th element of $I$ is bounded by $O_P\left(\ltwonorm{\c_{m_n}} \sqrt{\log n} \right)=o_P\left(1 \right)$ for all $k\in [K_m]$, and this bound is uniform for all $\betabs\in \N$. 

For the term $II$, its $k$th element satisfies that, uniformly for all $\betabs\in \N$,
\begin{equation*}
\frac{1}{\sqrt{n}\sigmae^2} \sum_{i=1}^n \U_{i,m}'\betasm(f_{i,m_k}-{\ws_k}'\x_{i,-m})=O_P\left(\ltwonorm{\b_{m_n}} \sqrt{\log n}\right) = o_P\left(1 \right). 
\end{equation*}

Combining the results for the terms $I$ and $II$ completes the proof. 
\end{proof}

\begin{lem} \label{lemma:B4}
Suppose the conditions of Theorem \ref{thm:2} hold. Then, 
\begin{equation*}
    \supnorm{\RR}=O_P\left(\sqrt{\frac{\log p}{n}}\left(\frac{1}{\sqrt{n}}+\frac{n^{1/4}}{\sqrt{p_{\min}}} \right) \right), 
\end{equation*}
where $\RR = n^{-1} \left\{ (\Uh-\U)'\U\betabs+\Uh'(\F\gammabs-\Fh\gammabh_a)+(\Uh-\U)'\epsilonb \right\}$.
\end{lem}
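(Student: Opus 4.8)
\noindent
The plan is to split the remainder into its three summands, $\RR=\RR_1+\RR_2+\RR_3$ with $\RR_1=n^{-1}(\Uh-\U)'\U\betabs$, $\RR_2=n^{-1}\Uh'(\F\gammabs-\Fh\gammabh_a)$ and $\RR_3=n^{-1}(\Uh-\U)'\epsilonb$, and to bound $\supnorm{\RR_\ell}$ for each $\ell$ separately. The key structural device is the blockwise identity obtained from $\Uh_m=(\I_n-n^{-1}\Fh_m\Fh_m')\X_m$ (which follows from $\Lambdabh_m=n^{-1}\X_m'\Fh_m$) and $\X_m=\F_m\Lambdab_m'+\U_m$, namely $\Uh_m-\U_m=(\I_n-n^{-1}\Fh_m\Fh_m')\F_m\Lambdab_m'-n^{-1}\Fh_m\Fh_m'\U_m$. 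Only the \emph{true} loading $\Lambdab_m$ enters the first piece, whereas the self-normalized $\Fh_m$ (so $n^{-1}\Fh_m'\Fh_m=\I_{K_m}$) heads the second; this prevents ever paying the maximal (hence $\sqrt{\log p}$-inflated) loading-estimation error, and reduces each of the three bounds to pairing a concentration factor with either the averaged factor-score error $\fnorm{(\I_n-n^{-1}\Fh_m\Fh_m')\F_m}=O_P(1+\sqrt{n/p_{\min}})$---which follows from Lemma \ref{lemma:C1}(a), $n^{-1}\Fh_m'\F_m=\H_m+o_P(1)$, and the near-orthogonality $\ltwonorm{\I-\H_m\H_m'}=O_P(1/\sqrt n+1/\sqrt{p_{\min}})$---or with the pointwise rate $\max_i\ltwonorm{\hat{\f}_i-\H\f_i}=O_P(1/\sqrt n+n^{1/4}/\sqrt{p_{\min}})$ of Lemma \ref{lemma:C2}(a).

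For $\RR_3$ the essential observation is that $\epsilonb$ is independent of $\X$, hence of $(\Uh,\U,\F)$; conditioning on $\X$, each coordinate of $\RR_3$ is a sub-Gaussian linear form in $\epsilonb$, so a union bound over the $p$ coordinates gives $\supnorm{\RR_3}=O_P(\sqrt{\log p}\,\max_j(n^{-2}\sum_i(\hat{u}_{ij}-u_{ij})^2)^{1/2})$, after which the blockwise identity together with the averaged factor-error bound supplies the remaining $n^{-1/2}$ and $1/\sqrt{p_{\min}}$; using $1/\sqrt{p_{\min}}\le n^{1/4}/\sqrt{p_{\min}}$ and $n^{-1/4}\le\sqrt{\log p}$ this lands inside the target rate. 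It is precisely this conditioning step---rather than a crude Cauchy--Schwarz bound against $n^{-1}\sum_i\epsilon_i^2$, which would only return the order $n^{-1/2}+n^{1/4}/\sqrt{p_{\min}}$---that produces the sharp $\sqrt{(\log p)/n}$ prefactor. For $\RR_1$, substituting the blockwise identity exposes the leading term $-(n^{-1}\U_m'\Fh_m)(n^{-1}\Fh_m'\U\betabs)$: the first factor has entries $O_P(\sqrt{(\log p)/n}+1/\sqrt{p_{\min}})$ by Bernstein and the uncorrelatedness of $\f$ and $\u$, while the second is $O_P(n^{-1/2}+1/\sqrt{p_{\min}})$ once $\Fh_m$ is replaced by the rotation $\F_m\H_m'$ (a mean-zero average) plus a factor-score-error correction; the $(\I_n-n^{-1}\Fh_m\Fh_m')\F_m$ contribution is of size $\fnorm{(\I_n-n^{-1}\Fh_m\Fh_m')\F_m}/n$, again inside the target.

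The main obstacle is $\RR_2=n^{-1}\Uh'(\F\gammabs-\Fh\gammabh_a)$, for two reasons. First, $\gammabh_a$ is data-dependent; since Lemma \ref{lemma:B4} is invoked inside the proof of Theorem \ref{thm:2} with $(\gammabh_a,\betabh_a)$ confined to the neighbourhood $\mathcal{M}$, it suffices---and this sidesteps any circularity---to prove the bound uniformly over all $\gammab$ with $\supnorm{\gammab-\H'\gammabs}=O_P(\delta_n)$; one then writes $\F\gammabs-\Fh\gammab$ as a factor-estimation error (handled by Lemmas \ref{lemma:C1}--\ref{lemma:C2}) plus $\Fh$ times the small deviation of $\gammab$ from its target $\H'\gammabs$, left-multiplies by $n^{-1}\Uh'=n^{-1}\U'+n^{-1}(\Uh-\U)'$, and checks each product, the $(\Uh-\U)'$ part being handled by the blockwise identity once more and being of strictly lower order. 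Second---the genuinely tedious part, common to all three bounds---the factor-score-error cross terms, such as $n^{-1}\sum_i u_{ij}(\hat{\f}_{i,m}-\H_m\f_{i,m})_k$ and $n^{-1}\sum_i(\hat{\f}_{i,m}-\H_m\f_{i,m})_k\epsilon_i$, must be controlled not by Cauchy--Schwarz but through the explicit first-order PCA expansion of $\hat{\f}-\H\f$ underpinning Lemmas \ref{lemma:C1}--\ref{lemma:C2} and Theorem 3.3 of \cite{Fan2013b}; it is this expansion that yields the $n^{1/4}/\sqrt{p_{\min}}$ (rather than a coarser) rate. The remaining work is bookkeeping---tracking the several error sources (pointwise and averaged factor-score errors, $\H_m\H_m'-\I$, and $\gammab-\H'\gammabs$) and verifying that no product ever generates a bare $n^{-1/2}$ unaccompanied by a $\sqrt{\log p}$, nor any term exceeding $\sqrt{(\log p)/n}\{1\vee(n^{1/4}/\sqrt{p_{\min}})\}$---with the sub-Gaussian and Bernstein concentration bounds for the mean-zero averages closing everything out.
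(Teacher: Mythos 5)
Your overall architecture mirrors the paper's only at the skeleton level (the same forced three-way split of $\RR$, uniformity over $\mathcal{M}$ to neutralize the data-dependent $\gammabh_a$, reliance on Lemmas \ref{lemma:C1}--\ref{lemma:C2}); the mechanics are genuinely different. The paper never uses the identity $\Uh_m-\U_m=(\I_n-n^{-1}\Fh_m\Fh_m')\F_m\Lambdab_m'-n^{-1}\Fh_m\Fh_m'\U_m$, conditioning on $\X$, or a first-order PCA expansion: it pairs the pointwise bounds $\max_{i,j}|\hat{u}_{ij}-u_{ij}|$ and $\max_i\ltwonorm{\fh_i-\H\f_i}$ from Lemma \ref{lemma:C2} with sup-norms of mean-zero averages such as $n^{-1}\sum_i\U_i$, $n^{-1}\U'\F$ and $n^{-1}\sum_i\epsilon_i$, which is where its single $\sqrt{(\log p)/n}$ comes from, and handles $\F\gammabs-\Fh\gammabh_a$ by inserting $\F\H\H'\gammabs$ and using $\gammabh_a\in\mathcal{M}$ exactly as you propose for $\RR_2$.

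The concrete gap is in your $\RR_3$ step. After conditioning and the union bound you need $\max_j\bigl(n^{-1}\sum_i(\hat{u}_{ij}-u_{ij})^2\bigr)^{1/2}=O_P(n^{-1/2}+p_{\min}^{-1/2})$, and you claim the blockwise identity plus the averaged factor-error bound supplies this. It does not: the component $-n^{-1}\Fh_m\Fh_m'\U_m$ has $j$th column of norm $\sqrt{n}\,\ltwonorm{n^{-1}\Fh_m'\U_m\bfm{e}_j}$, and the piece $\H_m'(n^{-1}\F_m'\U_m\bfm{e}_j)$ is a pure noise--noise average whose maximum over $j\in[p_m]$ is of order $\sqrt{(\log p)/n}$, not $n^{-1/2}$; no factor-estimation bound removes that logarithm. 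As written, your chain yields $\supnorm{\RR_3}=O_P\bigl((\log p)/n+\sqrt{(\log p)/(np_{\min})}\bigr)$, which exceeds the lemma's rate by $\sqrt{\log p}$ precisely when $p_{\min}\gtrsim n^{3/2}/\log p$, i.e.\ in the regime $p_{\min}\gg n^{3/2}$ where the lemma is invoked for Theorem \ref{thm:2}(d) and Theorem \ref{thm:3}. The fix is already in your toolkit: write that component's contribution as the product $(n^{-1}\U_m'\Fh_m)(n^{-1}\Fh_m'\epsilonb)$, spend the single $\sqrt{\log p}$ on $\max_j$ of the first factor, and bound the log-free second factor by $O_P(n^{-1/2}+p_{\min}^{-1/2})$ --- the pairing you use for $\RR_1$. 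Two smaller cautions: conditioning on $\X$ requires $\epsilonb$ independent of the covariates, which is stronger than the uncorrelatedness in Condition \ref{con:1} and is not needed in the paper's unconditional argument; and your closing target $\sqrt{(\log p)/n}\{1\vee(n^{1/4}/\sqrt{p_{\min}})\}$ is $\delta_n$, which is strictly weaker than the lemma's bound $\sqrt{(\log p)/n}\,(n^{-1/2}+n^{1/4}/\sqrt{p_{\min}})$; the bookkeeping (e.g.\ products of two averaged factor errors giving $p_{\min}^{-1}$ in your $\RR_1$ term) must be checked against the latter, which is where the pointwise $n^{1/4}/\sqrt{p_{\min}}$ rate, rather than the averaged $1/\sqrt{p_{\min}}$ one, earns its keep when $p_{\min}$ is small.
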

\begin{proof}
First, we bound $n^{-1}\Uh'(\F\gammabs-\Fh\gammabh_a)$. By Lemma \ref{lemma:C2}, $\supnorm{\Uh-\U}=o_P\left(1\right)$. Therefore,
\begin{eqnarray}
    & & \ssupnorm{\frac{1}{n} \Uh'(\F\gammabs-\Fh\gammabh_a)} \lesssim  \ssupnorm{\frac{1}{n}\U'(\F\gammabs-\Fh\gammabh_a)} \nonumber \\
    & \leq & \ssupnorm{\frac{1}{n}\U'(\F\gammabs-\F\H\H'\gammabs)} + \ssupnorm{\frac{1}{n} \U'(\F\H\H'\gammabs-\Fh\gammabh_a)} \nonumber \\
    & \leq & \ssupnorm{\frac{1}{n}\U'\F(\I_K-\H\H')\gammabs}+\ssupnorm{\frac{1}{n} \U'(\F\H\H'\gammabs-\Fh\gammabh_a)}. \label{eq:50}
\end{eqnarray}
Let $\gammat=(\I_K-\H\H')\gammabs$. Then, $\ltwonorm{\gammat} \leq \ltwonorm{\I_K-\H\H'}\ltwonorm{\gammabs}$. By Lemma \ref{lemma:C2}, 
\begin{equation*}
    \ltwonorm{\I_K-\H\H'}=O_P\left(1/\sqrt{n}+1/\sqrt{p_{\min}} \right). 
\end{equation*}
Since $\ltwonorm{\gammabs}^2\lesssim \var(\f'\gammabs)\lesssim \sigma^2_y=O(1)$, we have $\supnorm{\gammat}\leq \ltwonorm{\gammat}=O_P\left(1/\sqrt{n}+1/\sqrt{p_{\min}} \right)$. Then,
\begin{eqnarray*}
    \ssupnorm{\frac{1}{n} \U'\F(\I_K-\H\H')\gammabs} =\max_{j\in [p]} \Big|\frac{1}{n} \sum_{i=1}^n U_{ij} \sum_{k=1}^K f_{ik}\tilde{\gamma}_k\Big|.
\end{eqnarray*}
By Condition \ref{con:1}, $K$ is fixed, and $\ltwonorm{\gammat}=O_P\left(1/\sqrt{n}+1/\sqrt{p_{\min}} \right)$, we have that $\sum_{k=1}^K f_{ik}\tilde{\gamma}_k$ is sub-Gaussian with variance bounded by $O\left(1/n+1/p_{\min} \right)$. Moreover, $U_{ij}$ is also sub-Gaussian and uncorrelated with $f_{ik}$ for any $k\in [K]$. Then, $U_{ij}\sum_{k=1}^K f_{ik}\tilde{\gamma}_k$ is sub-exponential. Applying Bernstein inequality, we have
\begin{equation} \label{eq:51}
    \ssupnorm{\frac{1}{n} \U'\F(\I_K-\H\H')\gammabs}=O_P\left(\sqrt{\frac{\log p}{n}}\left(\frac{1}{\sqrt{n}}+\frac{1}{\sqrt{p_{\min}}}\right)\right).
\end{equation}

On the other hand, we have
\begin{equation} \label{eq:52}
    \ssupnorm{\frac{1}{n}\U'(\F\H\H'\gammabs-\Fh\gammabh_a)} \leq \ssupnorm{\frac{1}{n}\U'(\F\H-\Fh)\H'\gammabs} + \ssupnorm{\frac{1}{n}\U'\Fh(\H'\gammabs-\gammabh_a)}. 
\end{equation}
For the first term in \eqref{eq:52}, we have
\begin{eqnarray}
    & & \ssupnorm{\frac{1}{n}\U'(\F\H-\Fh)\H'\gammabs} 
        \leq \left(\max_{i\in [n]}\sum_{k=1}^K |(\F\H-\Fh)_{ik}(\H'\gammabs)_k|\right) \ssupnorm{\frac{1}{n} \sum_{i=1}^n \U_i} \nonumber \\
    & \leq & \ltwonorm{\H'\gammabs}\left(\max_{i\in [n]} \ltwonorm{\fh_i-\H\f_i}\right) \ssupnorm{\frac{1}{n}\sum_{i=1}^n\U_i} \nonumber \\
    & = & O_P\left(\sqrt{\frac{\log p}{n}}\right)O_P\left(\frac{1}{\sqrt{n}}+\frac{n^{1/4}}{\sqrt{p_{\min}}} \right), \label{eq:53} 
  \end{eqnarray}
  where the last equality is implied by Lemma \ref{lemma:C1}, and the fact that $\ltwonorm{\H'\gammabs}\leq \ltwonorm{\H'}\ltwonorm{\gammabs}=O(1)$. For the second term in \eqref{eq:52}, we have
\begin{eqnarray*}
    \ssupnorm{\frac{1}{n}\U'\Fh(\H'\gammabs-\gammabh_a)}
    \lesssim \ssupnorm{\frac{1}{n}\U'\F(\H'\gammabs-\gammabh_a)}
    \leq \left(\max_{k\in[K]}|(\H\gammabs-\gammabh_a)_k| \right) \sum_{k=1}^K \ssupnorm{\frac{1}{n}\U_if_{ik}}
\end{eqnarray*}
Since $\gammabh_a\in \mathcal{M}$, we have $\max_{k\in [K]} |(\H\gammabs-\gammabh_a)_k|=O_P\left(\delta_n \right)$. By Condition \ref{con:1}, for each $k\in [K]$, $U_{ij}f_{ik}$ is sub-exponential. Then $\supnorm{n^{-1}\U_if_{ik}}=O_P\left(\sqrt{(\log p)/n} \right)$. Since $K$ is fixed, $\sum_{k=1}^K\supnorm{n^{-1}\U_if_{ik}}=O_P\left(\sqrt{(\log p)/n} \right)$. Then,
\begin{equation*}
    \ssupnorm{\frac{1}{n}\U'\Fh(\H'\gammabs-\gammabh_a)}=O_P\left(\delta_n \right)O_P\left(\sqrt{\frac{\log p}{n}} \right) =
    O_P\left(\sqrt{\frac{\log p}{n}}\left(\frac{1}{\sqrt{n}}+\frac{n^{1/4}}{\sqrt{p_{\min}}} \right) \right). 
\end{equation*}
Therefore,
\begin{equation} \label{eq:54}
    \ssupnorm{\frac{1}{n}\U'(\F\H\H'\gammabs-\Fh\gammabh_a)} = O_P\left(\sqrt{\frac{\log p}{n}}\left(\frac{1}{\sqrt{n}}+\frac{n^{1/4}}{\sqrt{p_{\min}}} \right) \right).  
\end{equation}
Then, it follows from (\ref{eq:50}), (\ref{eq:51}) and (\ref{eq:54}) that
\begin{equation} \label{eq:55}
    \ssupnorm{\frac{1}{n} \Uh'(\F\gammabs-\Fh\gammabh_a)} = O_P\left(\sqrt{\frac{\log p}{n}}\left(\frac{1}{\sqrt{n}}+\frac{n^{1/4}}{\sqrt{p_{\min}}} \right) \right).
\end{equation}
For $\supnorm{n^{-1}(\Uh-\U)'\epsilonb}$, we have
\begin{equation} \label{eq:56}
    \supnorm{n^{-1}(\Uh-\U)'\epsilonb} \leq \left(\max_{ij}|\hat{u}_{ij}-u_{ij}| \right) \left|\frac{1}{n} \sum_{i=1}^n \epsilon_i\right|
    = O_P\left(\frac{1}{\sqrt{n}}\left(\sqrt{\frac{\log n}{n}}+\frac{n^{1/4}}{\sqrt{p_{\min}}} \right)\right).  
\end{equation}
where the last equality follows from Lemma \ref{lemma:C2}, and $n^{-1} \sum_{i=1}^n \epsilon_i=O_P\left(1/\sqrt{n} \right)$, since $\epsilon_i$ is sub-Gaussian. Similarly, we have
\begin{equation} \label{eq:57}
    \supnorm{n^{-1} (\Uh-\U)'\U\betab} = O_P\left(\frac{1}{\sqrt{n}}\left(\sqrt{\frac{\log n}{n}}+\frac{n^{1/4}}{\sqrt{p_{\min}}} \right)\right). 
\end{equation}
Combining (\ref{eq:55}), (\ref{eq:56}) and (\ref{eq:57}) completes the proof. 
\end{proof}

\begin{lem} \label{lemma:B5}
Suppose the conditions of Theorem \ref{thm:2} hold. Then, 
\begin{equation*}
    \ssupnorm{\frac{1}{n}\Fh'(\Y-\Uh \betabh_a- \Fh\H'\gammabs)}=O_P\left(\delta_n \right).
\end{equation*}
\end{lem}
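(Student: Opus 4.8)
The plan is to substitute the integrative factor regression representation $\Y=\F\gammabs+\U\betabs+\epsilonb$ from \eqref{eq:6} and split
\begin{align*}
\frac{1}{n}\Fh'(\Y-\Uh\betaha-\Fh\H'\gammabs)
&= \frac{1}{n}\Fh'(\F\gammabs-\Fh\H'\gammabs) + \frac{1}{n}\Fh'(\U-\Uh)\betabs \\
&\quad + \frac{1}{n}\Fh'\Uh(\betabs-\betaha) + \frac{1}{n}\Fh'\epsilonb,
\end{align*}
then bound each of the four terms in $\ssupnorm{\cdot}$ by $O_P(\delta_n)$. Structurally this is the $\Fh'$-analogue of the bound on $\RR$ in Lemma \ref{lemma:B4}, and most sub-estimates can be imported from there and from Lemmas \ref{lemma:C1}--\ref{lemma:C2}; the genuinely new ingredients are the two exact identities $n^{-1}\Fh_m'\Fh_m=\I_{K_m}$ (PCA normalization) and $n^{-1}\Fh_m'\Uh_m=\zero$ (from $\Uh_m=\X_m-\Fh_m\Lambdabh_m'$ with $\Lambdabh_m'=n^{-1}\Fh_m'\X_m$), which are precisely what make the recentering by $\Fh\H'\gammabs$ work.

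For the fourth term I would use that $\Fh$ is a measurable function of $\X$ alone, hence independent of $\epsilonb$: conditionally on $\X$, $n^{-1}\Fh'\epsilonb$ is mean zero with covariance $\sigmae^2 n^{-1}(n^{-1}\Fh'\Fh)$, and since the diagonal blocks of $n^{-1}\Fh'\Fh$ equal $\I_{K_m}$ while each off-diagonal entry is bounded in absolute value by $1$, one has $\lambda_{\max}(n^{-1}\Fh'\Fh)=O_P(1)$; with the sub-Gaussianity of $\epsilon$ from Condition \ref{con:1} and $K$ fixed this gives $\ssupnorm{n^{-1}\Fh'\epsilonb}=O_P(n^{-1/2})=O_P(\delta_n)$. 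For the first term I would invoke Lemma \ref{lemma:C2} to replace $\Fh$ by $\F\H'$ up to the row-wise error $\max_i\ltwonorm{\hat{\f}_i-\H\f_i}=O_P(n^{-1/2}+n^{1/4}/\sqrt{p_{\min}})$, together with $n^{-1}\Fh_m'\Fh_m=\I_{K_m}$, $n^{-1}\F_m'\F_m=\I_{K_m}+O_P(n^{-1/2})$, $\ltwonorm{\I_K-\H\H'}=O_P(n^{-1/2}+1/\sqrt{p_{\min}})$, and $\ltwonorm{\gammabs}=O(1)$ (since $\ltwonorm{\gammabs}^2=\var(\f'\gammabs)\le\sigma_y^2$); collecting pieces leaves a bound of order $n^{-1/2}+1/\sqrt{p_{\min}}=O_P(\delta_n)$.

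For the remaining two terms I would use that $\betaha$ vanishes outside $\ts$ and $\betabs$ is supported on $\ts$. The term $n^{-1}\Fh'(\U-\Uh)\betabs$ involves only the columns of $\U-\Uh$ indexed by $\ts$ and is controlled exactly as the term $n^{-1}(\Uh-\U)'\U\betabs$ in Lemma \ref{lemma:B4}, via the entrywise bound $\max_{ij}|\hat{u}_{ij}-u_{ij}|=O_P(\sqrt{(\log n)/n}+n^{1/4}/\sqrt{p_{\min}})$ of Lemma \ref{lemma:C2}, concentration of $n^{-1}\sum_i\hat{\f}_i$, and $\ltwonorm{\betabs}=O(1)$. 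In the term $n^{-1}\Fh'\Uh(\betabs-\betaha)=n^{-1}\Fh'\Uhts(\betasts-\betahats)$, the within-modality blocks of $n^{-1}\Fh'\Uhts$ are \emph{exactly} zero by the identity above, so only cross-modality blocks $n^{-1}\Fh_m'\Uh_{\cdot j}$ remain; these are of the same order as the cross terms already bounded in Lemma \ref{lemma:B4} (the population cross-modality factor/idiosyncratic covariances vanish), and they multiply $\supnorm{\betahats-\betasts}=O_P(\delta_n)$, guaranteed by $\betahats\in\mathcal{M}$. The main obstacle is exactly this last bookkeeping: one must make sure the aggregation of $\betasts-\betahats$ over $\ts$ (with $|\ts|=t+s_a$) does not inflate the rate past $\delta_n$, which is where the exact orthogonality $n^{-1}\Fh_m'\Uh_m=\zero$ and the sparsity of $T\cup S_a$ are essential; a secondary nuisance is that the rotation matrix $\H$ is only approximately orthogonal, so the algebra in the first term must track $\H$, $\H'$ and their error from $\I$ consistently rather than treat $\H$ as orthogonal.
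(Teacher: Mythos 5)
Your overall route is the same as the paper's: substitute \eqref{eq:6} for $\Y$, split the score at $(\H'\gammabs,\betaha)$ into a factor/rotation-error piece, idiosyncratic pieces, and a noise piece, and bound each with Lemmas \ref{lemma:C1}--\ref{lemma:C2} and Lemma \ref{lemma:B4}-type concentration. The one real difference is the idiosyncratic part: you write $\U\betabs-\Uh\betaha=(\U-\Uh)\betabs+\Uh(\betabs-\betaha)$ and exploit the exact PCA identity $\Fh_m'\Uh_m=\zero$ together with $\betaha\in\mathcal{M}$, whereas the paper keeps $n^{-1}\F'\U\betabs$ and $n^{-1}\F'\Uh\betabh_a$ as separate terms and bounds each directly by Bernstein, using that $\f$ and $\u$ are uncorrelated. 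Your version makes the within-modality cancellation explicit, at the price of the $(t+s_a)\cdot\supnorm{\betaha-\betabs}$ bookkeeping you yourself flag; the paper's version avoids that bookkeeping entirely because it never forms the difference.

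The concrete gap is in your first term. You arrive at a bound of order $1/\sqrt{n}+1/\sqrt{p_{\min}}$ (driven by $\ltwonorm{\I_K-\H\H'}$ and the row-wise error of $\Fh-\F\H$) and then assert this is $O_P(\delta_n)$. That inequality fails in general: since $\delta_n=\sqrt{(\log p)/n}\{1\vee n^{1/4}/\sqrt{p_{\min}}\}$, having $1/\sqrt{p_{\min}}\lesssim\delta_n$ requires either $\log p\gtrsim\sqrt{n}$ (when $p_{\min}\leq\sqrt{n}$) or $p_{\min}\gtrsim n/\log p$ (when $p_{\min}\geq\sqrt{n}$), and neither is among the conditions of Theorem \ref{thm:2}. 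The paper does not leave a bare $1/\sqrt{p_{\min}}$: its corresponding term $n^{-1}\F'\F(\I_K-\H\H')\gammabs$ is bounded, in the manner of \eqref{eq:51}, with an additional concentration prefactor of order $\sqrt{(\log K)/n}$, and it is that extra factor that keeps the term below $\delta_n$. So to close your argument you cannot use the crude row-wise maximum $\max_i\ltwonorm{\fh_i-\H\f_i}$ wherever the factor estimation error is averaged against $\F$ or $\epsilonb$; you need the sharper averaged bounds (quantities such as $n^{-1}(\Fh-\F\H)'\F$ and $n^{-1}(\Fh-\F\H)'\epsilonb$, or the paper's device of always pairing the rotation error with a mean-zero factor), otherwise the stated $O_P(\delta_n)$ rate is not reached. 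A secondary point: your conditional argument for $n^{-1}\Fh'\epsilonb$ uses independence of $\epsilonb$ and $\X$ and conditional homoskedasticity, while Condition \ref{con:1} only gives uncorrelated sub-Gaussian errors; the paper instead applies Bernstein to $f_{ik}\epsilon_i$, which needs only uncorrelatedness, so that term should be argued that way.
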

\begin{proof}
The proof is similar to Lemma \ref{lemma:B4}. We outline the key steps here. We have 
\begin{eqnarray*}
    & & \ssupnorm{\frac{1}{n}\Fh'(\Y-\Uh \betabh_a- \Fh\H'\gammabs)}
        \lesssim \ssupnorm{\frac{1}{n}\F'\{\F\gammabs-\Fh\H'\gammabs+\U\betabs-\Uh\betabh_a+\epsilonb \}} \\
    & \leq & \ssupnorm{\frac{1}{n}\F'\{\F\gammabs-\F\H\H'\gammabs \}} + \ssupnorm{\frac{1}{n}\F'(\F\H-\Fh)\H'\gammabs} \\
    & & + \ssupnorm{\frac{1}{n} \F'\U\betabs} + \ssupnorm{\frac{1}{n} \F'\Uh\betabh_a} + \ssupnorm{\frac{1}{n}\F'\epsilonb}.
\end{eqnarray*}
Similar to (\ref{eq:51}), we have
\begin{equation*}
    \ssupnorm{\frac{1}{n}\F'\{\F\gammabs-\F\H\H'\gammabs \}}= \ssupnorm{\frac{1}{n} \F'\F(\I_K-\H\H')\gammabs} 
    = O_P\left(\sqrt{\frac{\log K}{n}}\left(\frac{1}{\sqrt{n}}+\frac{1}{\sqrt{p_{\min}}}\right)\right).
\end{equation*}
Similar to (\ref{eq:53}), we have
\begin{equation*}
    \ssupnorm{\frac{1}{n}\F'(\F\H-\Fh)\H'\gammabs} = O_P\left(\sqrt{\frac{\log
          K}{n}}\left(\frac{1}{\sqrt{n}}+\frac{n^{1/4}}{\sqrt{p_{\min}}} \right)\right).  
\end{equation*}
Note that $\ltwonorm{\betabs}^2\lesssim \var(\x'\betabs)\leq \sigma_y^2=O(1)$, and $\lambda_{\max}(\Sigmab_u)=O(1)$. Therefore, $\U_i'\betabs$ is sub-Gaussian with bounded variance. Since $f_{ik}$ is sub-Gaussian, $f_{ik}\U_i'\betabs$ is exponential. Then by Bernstein inequality, we have that $\supnorm{n^{-1}\F'\U\betabs}=O_P\left(\sqrt{(\log K)/n} \right)$. Similarly, $\supnorm{n^{-1}\Fh'\Uh\betabs}=O_P\left(\sqrt{(\log K)/n} \right)$. Finally, $f_{ik}\epsilon_i$ is sub-exponential, then $\supnorm{n^{-1}\F'\epsilonb}=O_P\left(\sqrt{(\log K)/n} \right)$. Combining these results completes the proof.
\end{proof}

\subsection{Additional numerical results}
\label{sec:add-numerical}
Figure \ref{fig:realdata1} shows the heat map of the correlation matrix of the multimodal neuroimaging data analyzed in Section \ref{sec:real-data}. It is seen that some covariates are highly correlated. 

\begin{figure}[t!]
\centering
\includegraphics[width=3.2in, height=3.0in]{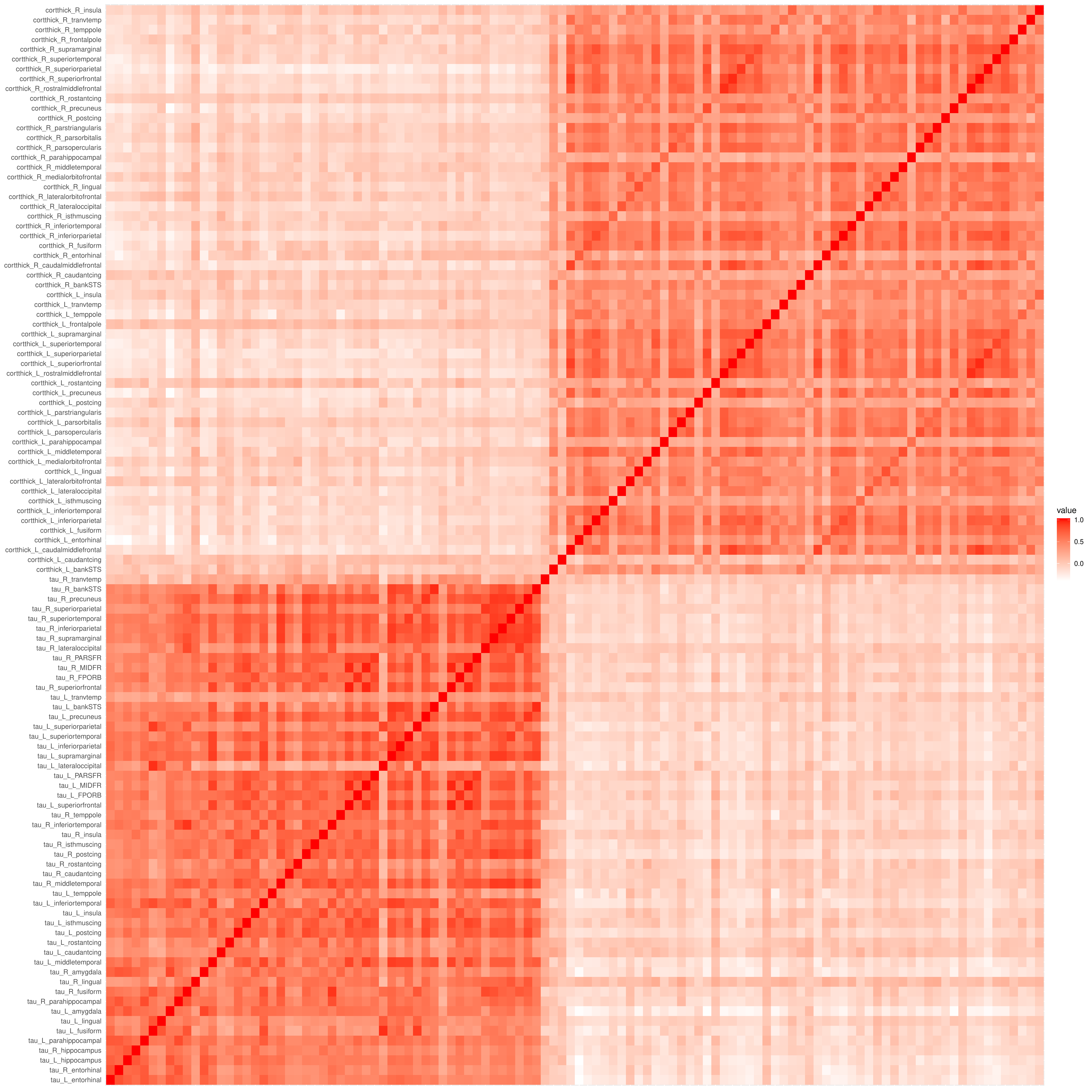}
\caption{The heat map of the correlation matrix of the multimodal neuroimaging data.}
\label{fig:realdata1}
\end{figure}

\bibliographystyle{bibstyle}

\begin{thebibliography}{}

\bibitem[\protect\citeauthoryear{{Alzheimer's Association}}{{Alzheimer's
      Association}}{2018}]{ADfacts2018}
  {Alzheimer's Association} (2018).
  \newblock Alzheimer's disease facts and figures.
  \newblock {\em Alzheimer's \& Dementia} {\bf 14,} 367--429.

\bibitem[\protect\citeauthoryear{Bai and Li}{Bai and Li}{2012}]{Bai2012}
  Bai, J. and Li, K. (2012).
  \newblock {Statistical analysis of factor models of high dimension}.
  \newblock {\em The Annals of Statistics} {\bf 40,} 436--465.

\bibitem[\protect\citeauthoryear{Bai and Ng}{Bai and Ng}{2002}]{Bai2002}
  Bai, J. and Ng, S. (2002).
  \newblock {Determining the Number of Factors in Approximate Factor Models}.
  \newblock {\em Econometrica} {\bf 70,} 191--221.

\bibitem[\protect\citeauthoryear{Bentkus}{Bentkus}{2005}]{bentkus2005lyapunov}
  Bentkus, V. (2005).
  \newblock A {L}yapunov-type bound in $\Rcal^d$.
  \newblock {\em Theory of Probability \& Its Applications} {\bf 49,} 311--323.

\bibitem[\protect\citeauthoryear{Chen and Chen}{Chen and Chen}{2008}]{Chen2008}
  Chen, J. and Chen, Z. (2008). Extended Bayesian information criteria for model selection with large model spaces. {\em
    Biometrika} {\bf 95,} 759--771.

\bibitem[\protect\citeauthoryear{Cai and Guo}{Cai and Guo}{2017}]{Cai2017}
  Cai, T. T. and Guo, Z. (2017). Confidence intervals for high-dimensional linear regression: Minimax rates and
  adaptivity. {\em The Annals of Statistics} {\bf 45,} 615--646.
  
\bibitem[\protect\citeauthoryear{Cai, Ma, and Wu}{Cai et~al.}{2013}]{Cai2013}
  Cai, T.~T., Ma, Z., and Wu, Y. (2013).
  \newblock Sparse PCA: Optimal rates and adaptive estimation.
  \newblock {\em Ann. Statist.} {\bf 41,} 3074--3110.
  
\bibitem[\protect\citeauthoryear{Ech{\'a}varri, Aalten, et~al\mbox{.}}{Ech{\'a}varri  
    et~al.}{2011}]{Echavarri2011}
  Ech{\'a}varri, C., Aalten, P., et~al. (2011).
  \newblock Atrophy in the parahippocampal gyrus as an early biomarker of Alzheimer's disease.
  \newblock {\em Brain Structure and Function} {\bf 215,} 265--271.

\bibitem[\protect\citeauthoryear{Fan, Guo, and Hao}{Fan
    et~al.}{2012}]{fan2012variance}
  Fan, J., Guo, S., and Hao, N. (2012).
  \newblock Variance estimation using refitted cross-validation in ultrahigh
  dimensional regression.
  \newblock {\em Journal of the Royal Statistical Society: Series B.} {\bf 74,} 37--65.

\bibitem[\protect\citeauthoryear{Fan, Ke, and Wang}{Fan
    et~al.}{2016}]{fan2016factor}
  Fan, J., Ke, Y., and Wang, K. (2016).
  \newblock Factor-adjusted regularized model selection.
  \newblock {\em arXiv:1612.08490} .

\bibitem[\protect\citeauthoryear{Fan, Liao, Mincheva, and Jan}{Fan
    et~al.}{2013}]{Fan2013b}
  Fan, J., Liao, Y., and Mincheva, M. (2013).
  \newblock {Large covariance estimation by thresholding principal orthogonal
    complements}.
  \newblock {\em Journal of the Royal Statistical Society. Series B.} {\bf 75,} 603--680.

\bibitem[\protect\citeauthoryear{Fan and Lv}{Fan and Lv}{2011}]{fan2011nonconcave} Fan, J. and Lv, J. (2011). \newblock Nonconcave penalized likelihood with NP-dimensionality. \newblock {\em Information Theory, IEEE Transactions} {\bf 57,} 5467--5484.

\bibitem[\protect\citeauthoryear{Gaynanova and Li}{Gaynanova and
    Li}{2019}]{gaynanova2019structural}
  Gaynanova, I. and Li, G. (2019).
  \newblock Structural learning and integrative decomposition of multi-view data.
  \newblock {\em arXiv:1707.06573}.
  
\bibitem[\protect\citeauthoryear{Greene and Killiany}{Greene and Killiany}{2010}]{Greene2010}
  Greene, S.J. and Killiany, R.J. (2011).
  \newblock Subregions of the inferior parietal lobule are affected in the progression to Alzheimer's disease.
  \newblock {\em Neurobiology of Aging} {\bf 31,} 1304--1311.

\bibitem[\protect\citeauthoryear{Javanmard and Lee}{Javanmard and
    Lee}{2020}]{javanmard2020general}
  Javanmard, A. and Lee, J. D. (2020). A flexible framework for hypothesis testing in high dimensions.
  {\em Journal of the Royal
    Statistical Society. Series B.} {\bf 82,} 685--718.

\bibitem[\protect\citeauthoryear{Javanmard and Montanari}{Javanmard and
    Montanari}{2014}]{javanmard2014confidence}
  Javanmard, A. and Montanari, A. (2014).
  \newblock Confidence intervals and hypothesis testing for high-dimensional
  regression.
  \newblock {\em The Journal of Machine Learning Research} {\bf 15,} 2869--2909.

\bibitem[\protect\citeauthoryear{Kapetanios}{Kapetanios}{2010}]{kapetanios2010testing}
  Kapetanios, G. (2010).
  \newblock A testing procedure for determining the number of factors in
  approximate factor models with large datasets.
  \newblock {\em Journal of Business \& Economic Statistics} {\bf 28,} 397--409.

\bibitem[\protect\citeauthoryear{Kneip, Sarda, et~al\mbox{.}}{Kneip
    et~al.}{2011}]{kneip2011factor}
  Kneip, A. and Sarda, P. (2011).
  \newblock Factor models and variable selection in high-dimensional regression
  analysis.
  \newblock {\em The Annals of Statistics} {\bf 39,} 2410--2447.

\bibitem[\protect\citeauthoryear{Krumm, Kivisaari, et~al\mbox{.}}{Krumm 
    et~al.}{2016}]{Krumm2016}
  Krumm, S., Kivisaari, S.L., et~al. (2016).
  \newblock Cortical thinning of parahippocampal subregions in very early Alzheimer's disease.
  \newblock {\em Neurobiology of Aging} {\bf 38,} 188--196.

\bibitem[\protect\citeauthoryear{Li and Gaynanova}{Li and
    Gaynanova}{2018}]{li2018general}
  Li, G. and Gaynanova, I. (2018).
  \newblock A general framework for association analysis of heterogeneous data.
  \newblock {\em The Annals of Applied Statistics} {\bf 12,} 1700--1726.

\bibitem[\protect\citeauthoryear{Li and Jung}{Li and
    Jung}{2017}]{li2017incorporating}
  Li, G. and Jung, S. (2017).
  \newblock Incorporating covariates into integrated factor analysis of
  multi-view data.
  \newblock {\em Biometrics} {\bf 73,} 1433--1442.

\bibitem[\protect\citeauthoryear{Li, Liu, and Chen}{Li
    et~al.}{2018}]{LiChen2018}
  Li, G., Liu, X., and Chen, K. (2018).
  \newblock Integrative multi-view reduced-rank regression: Bridging group-sparse
  and low-rank models.
  \newblock {\em arXiv:1308.1479} .

\bibitem[\protect\citeauthoryear{Li, Cheng, Fan, and Wang}{Li
    et~al.}{2018}]{li2018embracing}
  Li, Q., Cheng, G., Fan, J., and Wang, Y. (2018).
  \newblock Embracing the blessing of dimensionality in factor models.
  \newblock {\em Journal of the American Statistical Association} {\bf 113,}
  380--389.

\bibitem[\protect\citeauthoryear{Li, Wu, and Ngom}{Li et~al.}{2016}]{LiYF2016}
  Li, Y., Wu, F.-X., and Ngom, A. (2016).
  \newblock A review on machine learning principles for multi-view biological
  data integration.
  \newblock {\em Briefings in Bioinformatics} {\bf 19,} 325--340.

\bibitem[\protect\citeauthoryear{Lock, Hoadley, Marron, and Nobel}{Lock
    et~al.}{2013}]{lock2013joint}
  Lock, E.~F., Hoadley, K.~A., Marron, J.~S., and Nobel, A.~B. (2013).
  \newblock Joint and individual variation explained (JIVE) for integrated
  analysis of multiple data types.
  \newblock {\em The Annals of Applied Statistics} {\bf 7,} 523--542.

\bibitem[\protect\citeauthoryear{Lock and Li}{Lock and
    Li}{2018}]{lock2018supervised}
  Lock, E.~F. and Li, G. (2018).
  \newblock Supervised multiway factorization.
  \newblock {\em Electronic Journal of Statistics} {\bf 12,} 1150.

\bibitem[\protect\citeauthoryear{Lynch, Walsh, et~al\mbox{.}}{Lynch
    et~al.}{1998}]{lynch1998genetics}
  Lynch, M. and Walsh, B. (1998).
  \newblock {\em Genetics and analysis of quantitative traits}, volume~1.
  \newblock Sinauer Sunderland, MA.

\bibitem[\protect\citeauthoryear{Ma}{Ma}{2013}]{Ma2013}
  Ma, Z. (2013).
  \newblock Sparse principal component analysis and iterative thresholding.
  \newblock {\em The Annals of Statistics} {\bf 41,} 772--801.

\bibitem[\protect\citeauthoryear{Negahban, Ravikumar, Wainwright, and
    Yu}{Negahban et~al.}{2012}]{negahban2012unified}
  Negahban, S.~N., Ravikumar, P., Wainwright, M.~J., and Yu, B. (2012).
  \newblock A unified framework for high-dimensional analysis of {M}-estimators
  with decomposable regularizers.
  \newblock {\em Statistical Science} {\bf 27,} 538--557.

\bibitem[\protect\citeauthoryear{Ning and Liu}{Ning and
    Liu}{2017}]{ning2017general}
  Ning, Y. and Liu, H. (2017).
  \newblock A general theory of hypothesis tests and confidence regions for
  sparse high dimensional models.
  \newblock {\em The Annals of Statistics} {\bf 45,} 158--195.

\bibitem[\protect\citeauthoryear{Parikh, Boyd, et~al\mbox{.}}{Parikh
    et~al.}{2014}]{parikh2014proximal}
  Parikh, N. and Boyd, S. (2014).
  \newblock Proximal algorithms.
  \newblock {\em Foundations and Trends{\textregistered} in Optimization} {\bf
    1,} 127--239.

\bibitem[\protect\citeauthoryear{Raskutti, Wainwright, and Yu}{Raskutti
    et~al.}{2011}]{raskutti2011minimax}
  Raskutti, G., Wainwright, M.~J., and Yu, B. (2011).
  \newblock Minimax rates of estimation for high-dimensional linear regression
  over ${L}_q$-balls.
  \newblock {\em IEEE Transactions on Information Theory} {\bf 57,} 6976--6994.

\bibitem[\protect\citeauthoryear{Richardson, Tseng, and Sun}{Richardson
    et~al.}{2016}]{richardson2016statistical}
  Richardson, S., Tseng, G.~C., and Sun, W. (2016).
  \newblock Statistical methods in integrative genomics.
  \newblock {\em Annual Review of Statistics and Its Applications} {\bf 3,} 181--209.

\bibitem[\protect\citeauthoryear{Shen, Wang, and Mo}{Shen
    et~al.}{2013}]{shen2013sparse}
  Shen, R., Wang, S., and Mo, Q. (2013).
  \newblock Sparse integrative clustering of multiple omics data sets.
  \newblock {\em The Annals of Applied Statistics} {\bf 7,} 269--294.

\bibitem[\protect\citeauthoryear{Shi, Song, Chen, and Li}{Shi
    et~al.}{2019}]{shi2019linear}
  Shi, C., Song, R., Chen, Z., and Li, R. (2019).
  \newblock Linear hypothesis testing for high dimensional generalized linear
  models.
  \newblock {\em The Annals of Statistics, to appear}.

\bibitem[\protect\citeauthoryear{Shu, Wang, and Zhu}{Shu
    et~al.}{2019}]{ShuZhu2019}
  Shu, H., Wang, X., and Zhu, H. (2019).
  \newblock D-CCA: A decomposition-based canonical correlation analysis for
  high-dimensional datasets.
  \newblock {\em Journal of the American Statistical Association}, to appear.

\bibitem[\protect\citeauthoryear{Sun and Zhang}{Sun and
    Zhang}{2013}]{sun2013sparse}
  Sun, T. and Zhang, C.-H. (2013).
  \newblock Sparse matrix inversion with scaled lasso.
  \newblock {\em The Journal of Machine Learning Research} {\bf 14,} 3385--3418.

\bibitem[\protect\citeauthoryear{Tibshirani}{Tibshirani}{1996}]{tibshirani_regression_1996}
  Tibshirani, R. (1996).
  \newblock Regression shrinkage and selection via the lasso.
  \newblock {\em Journal of the Royal Statistical Society. Series B.} {\bf 58,} 267--288.

\bibitem[\protect\citeauthoryear{Uludag and Roebroeck}{Uludag and
    Roebroeck}{2014}]{Uludag2014}
  Uludag, K. and Roebroeck, A. (2014).
  \newblock General overview on the merits of multimodal neuroimaging data
  fusion.
  \newblock {\em Neuroimage} {\bf 102,} 3--10.

\bibitem[\protect\citeauthoryear{van~de Geer, B{\"u}hlmann, Ritov, Dezeure,
    et~al\mbox{.}}{van~de Geer et~al.}{2014}]{van2014asymptotically}
  Van~de Geer, S., B{\"u}hlmann, P., Ritov, Y., and Dezeure, R. (2014).
  \newblock On asymptotically optimal confidence regions and tests for
  high-dimensional models.
  \newblock {\em The Annals of Statistics} {\bf 42,} 1166--1202.

\bibitem[\protect\citeauthoryear{van~der Vaart}{van~der Vaart}{2000}]{Vaart2000}
  Van~der Vaart, A.W. (2000).
  \newblock {\em Asymptotic statistics}, volume~3.
  \newblock Cambridge university press.

\bibitem[\protect\citeauthoryear{Xue and Qu}{Xue and Qu}{2019}]{XueQu2019}
  Xue, F. and Qu, A. (2019).
  \newblock Integrating multi-source block-wise missing data in model selection.
  \newblock {\em arXiv:1901.03797} .

\bibitem[\protect\citeauthoryear{Yang and Michailidis}{Yang and
    Michailidis}{2015}]{yang2015non}
  Yang, Z. and Michailidis, G. (2015).
  \newblock A non-negative matrix factorization method for detecting modules in
  heterogeneous omics multi-modal data.
  \newblock {\em Bioinformatics} {\bf 32,} 1--8.

\bibitem[\protect\citeauthoryear{Zhang}{Zhang}{2010}]{zhang2010nearly}
  Zhang, C.-H. (2010).
  \newblock Nearly unbiased variable selection under minimax concave penalty.
  \newblock {\em The Annals of Statistics} {\bf 38,} 894--942.

\bibitem[\protect\citeauthoryear{Zhang and Zhang}{Zhang and Zhang}{2014}]{zhang2014confidence} Zhang, C.-H. and Zhang,
  S.~S. (2014).  \newblock Confidence intervals for low dimensional parameters in high dimensional linear models.
  \newblock {\em Journal of the Royal Statistical Society: Series B.} {\bf 76,} 217--242.

\bibitem[\protect\citeauthoryear{Zhang, Wang, Zhou, Yuan, Shen, and the
    Alzheimers Disease Neuroimaging~Initiative}{Zhang
    et~al.}{2011}]{ZhangShen2011}
  Zhang, D., Wang, Y., Zhou, L., Yuan, H., Shen, D., and the Alzheimers Disease
  Neuroimaging~Initiative (2011).
  \newblock Multimodal classification of {Alzheimer's} disease and mild cognitive
  impairment.
  \newblock {\em Neuroimage} {\bf 55,} 856 -- 867.

\bibitem[\protect\citeauthoryear{Zhang, Tang, and Qu}{Zhang
    et~al.}{2019}]{ZhangQu2019}
  Zhang, Y., Tang, N., and Qu, A. (2019).
  \newblock Imputed factor regression for high-dimensional block-wise missing
  data.
  \newblock {\em Statistica Sinica}, to appear.

\bibitem[\protect\citeauthoryear{Zhu, and Bradic}{Zhu and Bradic}{2017}]{ZhuBradic2017}
  Zhu, Y. and Bradic, J. (2017). A projection pursuit framework for testing general high-dimensional hypothesis. {\em arXiv
  preprint} 1705.01024.
  
\bibitem[\protect\citeauthoryear{Zhu, and Bradic}{Zhu and Bradic}{2018}]{ZhuBradic2018}
  Zhu, Y. and Bradic, J. (2018). Linear hypothesis testing in dense high-dimensional linear models. {\em Journal of the
    American Statistical Association} {\bf 113,} 1583--1600.
\end{thebibliography}

\end{document}